\newif\ifpdf\ifx\pdfoutput\undefined\pdffalse\else\pdfoutput=1\pdftrue\fi
\newtheorem{thm}{Theorem}[section]
\newtheorem{theorem}[thm]{Theorem}
\newtheorem{corollary}[thm]{Corollary}
\newtheorem{proposition}[thm]{Proposition}
\newtheorem{lemma}[thm]{Lemma}
\newtheorem{conjecture}[thm]{Conjecture}
\theoremstyle{definition}
\newtheorem{assumption}[thm]{Assumption}
\newtheorem{remark}[thm]{Remark}
\newcommand{\nc}{\newcommand}
\nc{\cH}{\mathcal{H}} \nc{\cA}{\mathcal{A}} \nc{\cG}{\mathcal{G}}
\nc{\cC}{\mathcal{C}}
\nc{\cO}{\mathcal{O}}
\nc{\cI}{\mathcal{I}}
\nc{\cB}{\mathcal{B}} \nc{\cY}{\mathcal{Y}} \nc{\cK}{\mathcal{K}}
\nc{\cX}{\mathcal{X}} \nc{\cS}{\mathcal{S}} \nc{\cE}{\mathcal{E}}
\nc{\cF}{\mathcal{F}} \nc{\cZ}{\mathcal{Z}} \nc{\cQ}{\mathcal{Q}}
\nc{\cN}{\mathcal{N}} \nc{\cP}{\mathcal{P}} \nc{\cL}{\mathcal{L}}
\nc{\cM}{\mathcal{M}} \nc{\cR}{\mathcal{R}} \nc{\cT}{\mathcal{T}}
\nc{\cW}{\mathcal{W}} \nc{\cU}{\mathcal{U}} \nc{\cD}{\mathcal{D}}
\nc{\cJ}{\mathcal{J}} \nc{\cV}{\mathcal{V}}
\nc{\fr}{{\rightarrow}}
\nc{\rd}{red.deg}
\newcommand{\Q}{\mathbb{Q}}
\newcommand{\Z}{\mathbb{Z}}
\newcommand{\R}{\mathbb{R}}
\newcommand{\pr}{\mathbb P}
\newcommand{\sym}{\mbox{\upshape{Sym}}}
\newcommand{\rk}{\mbox{\upshape{rank}}}
\newcommand{\pb}{\pr_B(\cE)}
\title{Stability and singularities of relative hypersurfaces}
\author{M.A. Barja \footnote{Partially supported by MINECO-MTM2012-38122-C03-01 and by Generalitat de Catalunya 2005SGR00557} , L. Stoppino \footnote{Partially supported by PRIN 2012 {\em Moduli, strutture geometriche e loro applicazioni}, G.N.S.A.G.A.--I.N.d.A.M., and FAR 2013-2014 Insubria}}
\begin{document}

\maketitle

\begin{abstract}
We study relative hypersurfaces over curves, and prove an instability condition for the fibres. This gives an upper bound on the log canonical threshold of the relative hypersurface. We compare these results with the information that can be derived from Nakayama's Zariski decomposition of effective divisors on relative projective bundles.
\end{abstract}


\section{Introduction and discussion of the results}\label{intro}

We work over the complex field. Let $\cE$ be a vector bundle of rank $r\geq 3$ and degree $d$  on a smooth projective curve $B$ of genus $b$.
Consider the relative projective bundle $\pr:=\pr_B(\cE)$ with its structure morphism
$\pi\colon \pr\longrightarrow B$.
Let $\cO_{\pr}(1)$ be the tautological sheaf.

Let us consider a relative hypersurface $X\subset \pr$. This means for us an
element  of  a linear  system on $\pr$ with positive degree on the fibres. 
Such a  system is of the form $|\cO_{\pr}(k)\otimes \pi^*\cM^{-1}|$, where $k>0$ and  $\cM $ is a line bundle on the base $B$ whose degree we shall denote $y$.
Call $f\colon X\longrightarrow B$ the restriction of the morphism $\pi$ to $X$.
$$
\xymatrix{
X\ar^{j}[r] \ar[d]_f &
\pr_B(\cE)
\ar[dl]^\pi \\
B}
$$
In this paper we investigate the inequalities satisfied by invariants  of this fibration and relate them to the geometry and stability properties of $X$ and of its fibres.

\medskip

First of all we study the $f$-positivity of line bundles on $X$.
Recall the  following definition from \cite[Def. 1.3]{BS3}: given a fibred $n$-dimensional variety $g\colon Y\longrightarrow T$  over a smooth curve $T$, and given a line bundle $\cL$ on $Y$, we say that {\em $\cL$ is $g$-positive} if the following inequality holds
\begin{equation}\label{f-pos}
\cL^n\geq n\frac{\cL_{|F}^{n-1}}{h^0(F, \cL_{|F})}\deg g_*\cL.
\end{equation}

When the fibres are of general type the $g$-positivity of the relative canonical sheaf $\omega_g=\omega_Y\otimes \omega_T^{-1}$ is of particular interest:
\begin{equation}\label{slopeinequality}
 K_g^n\geq n \frac{K_F^{n-1}}{p_g(F)}\deg g_*\omega_g.
\end{equation}
This  is usually called {\em slope inequality}.
It is a classical result that the slope inequality holds for relatively minimal fibred surfaces of genus $\geq 2$: see \cite{BS3} for an account of the proofs.
The general expectation is that the slope inequality holds in higher dimension (see \cite[Sec.4]{BS3} for a detailed discussion):
\begin{conjecture}[\cite{BS3} Conjecture 4.1]\label{conj-slope}
Let  $g\colon Y\longrightarrow T$   be a fibred variety such that the relative canonical sheaf $\omega_g$ is relatively nef and that it is ample on the general fibres, and such that the general fibres have sufficiently mild singularities. Then it satisfies the slope inequality (\ref{slopeinequality}).
\end{conjecture}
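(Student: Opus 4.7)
The plan is to attempt Conjecture~\ref{conj-slope} by reducing to the relative hypersurface setting that is the focus of this paper, and then applying the $g$-positivity/slope results that will be proved for relative hypersurfaces. Concretely, the strategy is: first, after suitable birational modification and base change, realize (an appropriate model of) the fibration $g\colon Y\to T$ as a relative hypersurface $X\subset \pr_T(\cE)$ for some vector bundle $\cE$ on $T$, in a linear system $|\cO(k)\otimes \pi^*\cM^{-1}|$; second, translate the slope-type inequality available for $X$ into the slope inequality (\ref{slopeinequality}) for $g$, via the adjunction formula on the projective bundle.

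For the first step, the natural candidate is $\cE := g_*\omega_g^{\otimes m}$ for $m \gg 0$, which pluricanonically embeds the general fibre into projective space; here the hypothesis that $\omega_g$ is relatively nef and ample on general fibres, together with the mild singularity assumption, is exactly what makes the embedding well-behaved. One then attempts to express the $m$-canonical image of $Y$ as cut out by a relatively ample divisor of the form $\cO_{\pr}(k)\otimes \pi^*\cM^{-1}$, so that by adjunction $\omega_g$ itself becomes a known combination of $\cO_{\pr}(1)_{|X}$ and $f^*\cM$. Under this identification, all of the invariants appearing in (\ref{slopeinequality}), namely $K_g^n$, $K_F^{n-1}$, $p_g(F)$ and $\deg g_*\omega_g$, can be rewritten in terms of $r=\rk\cE$, $d=\deg\cE$, $k$ and $y$.

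For the second step, one applies the $g$-positivity inequality (\ref{f-pos}) to the bundle on $X$ that under adjunction corresponds to $\omega_g$, using the instability theorem for the fibres of $f$ announced in the abstract together with the resulting upper bound on the log canonical threshold to control the deviation from semistability. The hope is that, once translated back through adjunction, these inequalities collapse precisely to (\ref{slopeinequality}).

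The main obstacle is the first step: a general fibration satisfying the conjecture's hypotheses is \emph{not} a relative hypersurface. The $m$-canonical image generically sits in its ambient projective bundle in very high codimension, and the best one can typically hope for is a complete intersection, or an embedding in a weighted projective bundle. A genuine proof of the conjecture along these lines would therefore require extending the hypersurface analysis of this paper to complete intersections, and controlling how fibre singularities interact with fibrewise stability uniformly along the family — the latter being exactly the ingredient that the announced log canonical threshold bound is designed to provide, but only in the hypersurface case.
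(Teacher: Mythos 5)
The statement you are trying to prove is a \emph{conjecture} (Conjecture 4.1 of \cite{BS3}); the paper does not prove it, and explicitly says almost nothing is known in dimension $>2$ beyond the $\mu$-semistable case and the relative hypersurface case. So there is no ``paper proof'' to match: what the paper actually establishes (Theorem \ref{uno}) is the conjecture only for fibrations that are \emph{already given} as relative hypersurfaces $X\subset\pr_B(\cE)$, by a direct intersection-theoretic computation of $K_f^{r-1}$ and $\deg f_*\omega_f$ from adjunction and $f_*\omega_f\cong\sym^{k-r}\cE\otimes\cD$. Your proposal does not prove the conjecture either, and you in fact concede the fatal point yourself: a general fibration satisfying the hypotheses cannot be realized as a relative hypersurface. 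Taking $\cE=g_*\omega_g^{\otimes m}$ embeds the fibres pluricanonically, but the image sits in codimension far greater than one, so there is no linear system $|\cO_\pr(k)\otimes\pi^*\cM^{-1}|$ containing it and the entire reduction collapses at step one. The paper's companion work on relative complete intersections (\cite{BShyper}) is mentioned precisely because this extension is nontrivial and still far from covering arbitrary fibrations.

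There is also a logical inversion in your second step. In the paper the slope inequality for hypersurfaces is proved \emph{without any stability or singularity input}, purely from the numerics $y/k\leq\mu$ (Theorems \ref{uno} and \ref{due}); the Chow instability of the fibres (Theorem \ref{corCH}) and the log canonical threshold bound (Theorems \ref{TEOlct}, \ref{slc}) are \emph{consequences}, obtained by running the Cornalba--Harris/Bost criterion backwards and invoking Lee's theorem. Your plan proposes to use the instability theorem and the lct bound as \emph{ingredients} to derive $f$-positivity, which reverses the implication actually available: instability of the fibres is an obstruction to, not a source of, the positivity statement, and the known positive direction (Theorem \ref{teoCH}) requires Chow \emph{semistability} of the general fibre---exactly the kind of stability hypothesis the conjecture is hoped to avoid. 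So even granting a hypersurface (or complete intersection) model, the chain of implications you sketch would not close.
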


In dimension higher than $2$ almost nothing is known.
The most general result is that in the conditions of the conjecture, the slope inequality holds if $g_*\omega_g$ is $\mu$-semistable (see for instance \cite[Cor. 1.1]{BS3}).

\medskip

Let us come back to a relative hypersurface $f\colon X\longrightarrow B$. 
The main result of the first part of the paper staes that we can completely determine the $f$-positivity of any relatively ample line bundle on $X$.
This property is essentially equivalent to the numerical condition $y/k\geq \mu$ relating the ratio between the relative degree  $k$ of $X$ and  the degree $y$ of the line bundle $\cM$ to the  slope of the vector bundle $\mu:=\deg \cE/r$. 
The results  can be summarized as follows (see Theorems \ref{uno} and \ref{due}).

\begin{theorem}\label{main}
With the notations above, let $X\in |\cO_\pr(k)\otimes\pi^*\cM^{-1}|$.
Then the following propositions hold.
\begin{enumerate}
\item suppose that $k>r$ (i.e. $\omega_f$ is relatively very ample).
Then the following  are equivalent:
\begin{enumerate}
\item the slope inequality (\ref{slopeinequality}) holds;
\item $K_f^{r}\geq 0$ and ${\rm deg}f_*\omega_f\geq 0$;
\item  $y/k\leq \mu$.
\end{enumerate}
\item If $k>1$ the line bundle $\cO_X(h)$ is $f$-positive for any $h\geq 1$  if and only if $y/k\leq\mu$.
\end{enumerate}
\end{theorem}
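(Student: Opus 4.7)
The plan is to reduce the $f$-positivity inequality to a single scalar condition via explicit computations on $\pr = \pr_B(\cE)$, and then to verify a combinatorial positivity. Throughout I use the restriction sequence
\[
0 \fr \cO_\pr(h-k)\otimes \pi^*\cM \fr \cO_\pr(h) \fr \cO_X(h) \fr 0,
\]
together with the relations $\xi^r = d$ and $\xi^{r-1}\cdot\pi^*[\mathrm{pt}] = 1$ for $\xi = c_1(\cO_\pr(1))$ on $\pr$, and the class $[X] = k\xi - y\,\pi^*[\mathrm{pt}]$. Setting $p(j):=\binom{j+r-1}{r-1}$ for $j\ge 0$ and $p(j):=0$ for $j<0$, a short computation gives
\[
\cO_X(h)^{r-1} = h^{r-1}(kd-y), \qquad \cO_X(h)|_F^{r-2} = h^{r-2}k, \qquad h^0(F,\cO_F(h)) = p(h)-p(h-k),
\]
and pushing the sequence forward by $\pi$, using Bott vanishing on $\pr^{r-1}$ together with relative Serre duality for the higher direct images, one obtains
\[
\deg f_*\cO_X(h) = \bigl[hp(h)-(h-k)p(h-k)\bigr]\mu - p(h-k)\,y.
\]

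Substituting these four quantities into the $f$-positivity inequality of \cite[Def.~1.3]{BS3} and simplifying produces the clean factorisation
\[
\cO_X(h)^{r-1} - (r-1)\,\frac{\cO_X(h)|_F^{r-2}}{h^0(F,\cO_F(h))}\,\deg f_*\cO_X(h) \;=\; \frac{h^{r-2}}{p(h)-p(h-k)}\,(k\mu-y)\,A(h),
\]
with $A(h):= hp(h)-(h+(r-1)k)\,p(h-k)$. Hence $f$-positivity of $\cO_X(h)$ is equivalent to the sign condition $(k\mu-y)A(h)\ge 0$, and Part~(2) reduces to showing $A(h)>0$ for every $h\ge 1$. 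For $1\le h\le k-1$ this is immediate, as then $p(h-k)=0$ and $A(h)=hp(h)$; for $h\ge k$ it becomes the polynomial inequality
\[
h(h+1)\cdots(h+r-1) \;>\; \bigl(h+(r-1)k\bigr)(h-k+1)(h-k+2)\cdots(h-k+r-1),
\]
whose two sides coincide at $k=0$ and at $k=1$, so the difference (viewed as a polynomial in $k$) is divisible by $k(k-1)$, and the quotient is visibly positive---explicitly $A(h) = \tfrac12 k(k-1)(3h-2k+4)$ when $r=3$, with the general case handled by induction on $r$.

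Part~(1) then follows. Adjunction yields $\omega_f = \cO_X(k-r)\otimes f^*(\det\cE\otimes\cM^{-1})$, and $f$-positivity is invariant under twisting by the pullback of any line bundle on $B$: the extra contributions to $\cL^{r-1}$ and to $\deg f_*\cL$ cancel exactly. Consequently the slope inequality for $\omega_f$ is equivalent to the $f$-positivity of $\cO_X(k-r)$, and since $k>r$ puts $k-r$ in $[1,k-1]$, Part~(2) gives (a)$\Leftrightarrow$(c). For (b)$\Leftrightarrow$(c), specialising the formulas above at $h=k-r$ produces
\[
K_f^{r-1} = r(k-r)^{r-2}(k-1)(k\mu - y), \qquad \deg f_*\omega_f = \binom{k-1}{r-1}(k\mu - y),
\]
each of which, under the assumption $k>r\ge 3$, is nonnegative iff $y/k\le\mu$.

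The reduction to $(k\mu-y)A(h)\ge 0$ is routine intersection theory on a projective bundle, and Part~(1) is essentially a corollary of Part~(2). I expect the real obstacle to be the final combinatorial step, the positivity of $A(h)$ for $h\ge k$: it is elementary in spirit but not a one-line check, and the cleanest route seems to go through the factorisation by $k(k-1)$ noted above.
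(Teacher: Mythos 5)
Your reduction is exactly the paper's: the intersection numbers, the rank and degree of $f_*\cO_X(h)$, and the resulting criterion that $f$-positivity of $\cO_X(h)$ is equivalent to $(k\mu-y)A(h)\geq 0$ coincide with Lemma \ref{lemma} (your $A(h)$ is the paper's $\alpha(h,k)$), and your treatment of part (1) --- adjunction, twist-invariance of $f$-positivity, and the explicit values of $K_f^{r-1}$ and $\deg f_*\omega_f$ --- is the same as in Theorem \ref{uno} and Remark \ref{twist}; note that part (1) only uses $h=k-r\in[1,k-1]$, where $A(h)=hp(h)>0$ is immediate, so that part is complete. The gap is in part (2), at the only genuinely non-routine step: the positivity of $A(h)$ for $h\geq k\geq 2$. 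The divisibility of the difference by $k(k-1)$ is correct, but ``the quotient is visibly positive'' is not visible for general $r$ (you only exhibit it for $r=3$), and the promised induction on $r$ is never set up --- no inductive step is given --- so as written the implication $y/k\leq\mu\Rightarrow f$-positivity for all $h\geq 1$ is not proved. The paper devotes a separate lemma to exactly this inequality and proves it by induction on $k$.

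The route you sketch does close, and in fact more cleanly than through the $k(k-1)$-quotient. Set $D_r(h,k):=\prod_{i=0}^{r-1}(h+i)-\bigl(h+(r-1)k\bigr)\prod_{i=1}^{r-1}(h-k+i)$, so that $A(h)=D_r(h,k)/(r-1)!$. Then $D_2(h,k)=k(k-1)$, and a direct computation gives
$$
D_{r+1}(h,k)=(h+r)\,D_r(h,k)+rk(k-1)\prod_{i=1}^{r-1}(h-k+i),
$$
whose second summand is strictly positive when $h\geq k\geq 2$; induction on $r$ then yields $D_r(h,k)>0$ for all $r\geq 2$, $h\geq k\geq 2$, and the divisibility observation is not even needed. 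With this lemma supplied, your proof is complete and is essentially the paper's argument, with induction on $r$ replacing the paper's induction on $k$.
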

%
The above results are not hard to prove, by computation of intersection theory.

\bigskip

In the second part of this paper, we investigate the meaning of the above results on the geometry of $X$ and of its fibres.
First of all we derive from Theorem \ref{main} some instability and singularity conditions on the fibres and also on the total space $X$.
These results are particularly significant in the light of the  study of  the properties of  big divisors in $\pr$.
From this perspective we see that some results of other nature, such as the Zariski decomposition of pseudoeffective divisors, imply in some particular cases our singularity results. 
Moreover, with these methods we can find examples of the sharpness of the results.

Let us describe these arguments in detail.

In \cite{BS3} we have seen that the known methods to prove $f$-positivity all need to assume some stability condition.
However we do not need any kind of stability assumption for proving the above results for hypersurfaces.
Some of the methods  described in \cite{BS3}, in particular the one due to Cornalba-Harris and Bost  (Theorem \ref{teoCH}) can thus
be used backwards in this context to prove an instability result (Theorem \ref{corCH}):
\begin{theorem}\label{CHqui}
Let $\cE$ be a $\mu$-unstable sheaf. Then, given any relative hypersurface $X\in |\cO_\pr(k)\otimes\pi^*\cM^{-1}|$ with $y/k>\mu$, any fibre of $f\colon X\longrightarrow B$ is Chow unstable with respect to $\cO_F(h)$ for  any $h\geq 1$.
\end{theorem}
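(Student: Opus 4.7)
The strategy is to use the Cornalba--Harris--Bost criterion (Theorem~\ref{teoCH}) in reverse. That criterion asserts that, given a relatively very ample line bundle $\cL$ on a fibration $f\colon X\to B$, Chow semistability of the generic fibre of $f$ with respect to $\cL_{|F}$ implies the $f$-positivity of $\cL$. Taking the contrapositive, failure of $f$-positivity forces Chow instability of the generic fibre; our failure of $f$-positivity will be produced by Theorem~\ref{main}.

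Fix $h\geq 1$ and put $\cL=\cO_X(h)$. By the second part of Theorem~\ref{main}, the hypothesis $y/k>\mu$ yields that $\cL$ is not $f$-positive, i.e.\ inequality~\eqref{f-pos} fails with this choice of $\cL$. Applying Theorem~\ref{teoCH} in contrapositive form then shows that the generic fibre of $f$ is Chow unstable with respect to $\cO_F(h)$.

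To promote ``generic fibre'' to ``any fibre'' one has to exhibit an explicit destabilizing 1-parameter subgroup defined over the whole base. The $\mu$-instability of $\cE$ supplies a subbundle $\cE'\subset\cE$ with $\mu(\cE')>\mu$; the associated flag restricts fibrewise to a flag of each $\cE_b$ and thereby induces a canonical 1-PS $\lambda_b$ acting on $H^0(\pr_b,\cO(h))$. The Cornalba--Harris weight computation, retraced from the proof of Theorem~\ref{teoCH}, expresses the fibrewise Chow weight $w_{\lambda_b}([F_b])$ as an intersection number on $\pr_B(\cE)$ involving only the numerical classes of $X$ and $\pr_B(\cE')$; this number is therefore independent of $b$, and its sign is exactly the defect of $f$-positivity of $\cO_X(h)$. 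Under the hypothesis $y/k>\mu$ this weight is strictly positive for every $b\in B$, so every fibre $F_b$ is Chow unstable with respect to $\cO_{F_b}(h)$.

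The step I expect to be the main obstacle is the last one: verifying cleanly that the Chow weight associated to $\lambda_b$ is genuinely constant in $b$, so that the generic Chow instability obtained from Cornalba--Harris--Bost actually destabilizes each fibre uniformly. Once that book-keeping is in place, the theorem follows from Theorem~\ref{main}(2) with no further input.
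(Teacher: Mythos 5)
Your first two steps are exactly the paper's argument: Theorem~\ref{due} shows that $y/k>\mu$ forces $\cO_X(h)$ to fail $f$-positivity, and the contrapositive of the Cornalba--Harris--Bost criterion (Theorem~\ref{teoCH}) then gives Chow instability of the \emph{general} fibre with respect to $\cO_F(h)$; this is precisely how Theorem~\ref{corCH} is deduced in the paper.

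The gap is in your third step. The claim that the 1-parameter subgroup $\lambda_b$ induced by a destabilizing subbundle $\cE'\subset\cE$ has fibrewise Chow weight expressible as an intersection number depending only on the numerical classes of $X$ and $\pr_B(\cE')$, constant in $b$ and with sign equal to the defect of $f$-positivity, cannot be extracted from the proof of Theorem~\ref{teoCH} and is false in general. For a degree-$k$ hypersurface $F_b=\{G_b=0\}\subset\pr(\cE_b)$ the Hilbert--Mumford/Chow weight of a fixed 1-PS is an extremum over the monomials actually occurring in $G_b$, so it depends on the particular equation, not on the numerical class: two members of the same linear system (one containing $\pr(\cE_b/\cE'_b)$ with high multiplicity, one not) give different weights for the same $\lambda_b$. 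The Cornalba--Harris proof does not compute fibrewise weights of a chosen 1-PS at all; it uses nonvanishing $SL$-invariants of the Chow form of a semistable fibre to produce sections of a line bundle on $B$. Moreover, the paper itself remarks (after Corollary~\ref{cor-fix}) that the instability information carried by the Harder--Narasimhan flag neither implies nor is implied by Theorem~\ref{corCH} when $\ell>2$, so this explicit-1-PS route cannot recover the full statement. The correct upgrade from ``general fibre'' to ``any fibre'' is much simpler: since $X$ has no vertical components, the fibres of $f$ form a family of degree-$k$ hypersurfaces in the fibres of $\pi$, Chow semistability is an open $SL_r$-invariant condition on the space of such hypersurfaces, hence the set of $b\in B$ with $F_b$ Chow unstable is closed; containing a dense open subset of the irreducible curve $B$, it is all of $B$. (A minor shared loose end: Theorem~\ref{due} treats $k=1$ separately, where $\cO_X(1)$ is always $f$-positive, so the case $k=h=1$ must be checked directly -- every hyperplane is in any case Chow unstable under $SL_r$.)
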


In order to understand the interest of Theorem \ref{CHqui}, let us recall some known facts, referring to Section \ref{birational} for a more detailed discussion.
Let $\mu_1$ and $\mu_\ell$ be the first and the last slope associated to the Harder-Narasimhan sequence of $\cE$.
Recall that  if $\cE$ is $\mu$-unstable $\mu_{\ell}<\mu<\mu_1$, otherwise $\ell=1$ and $\mu=\mu_1$.
The Harder--Narashiman slopes govern the shape  of the positive cones  of divisors on $\pr$, as follows.
\begin{itemize}
\item (Miyaoka \cite{miyaoka}) A line bundle $\cO_\pr(k)\otimes \pi^*\cM^{-1}$ is nef if and only if $y/k\leq \mu_\ell$.
\item (Nakayama \cite{nakayama}) A line bundle $\cO_\pr(k)\otimes \pi^*\cM^{-1}$ is pseudoeffective if and only if $y/k \leq \mu_1$.
\end{itemize}
Using a result due to Lee \cite{Lee}, we can can prove via Theorem \ref{CHqui} a  condition on the singularities of the effective divisors in $|\cO_\pr(k)\otimes \pi^*\cM^{-1}|$ with $y/k\geq\mu$ (see Theorem \ref{TEOlct}); this condition involves the log canonical threshold (lct) of the couple $(\Sigma, {X}_{|\Sigma})$, where $\Sigma$ is a general fibre of $\pi$.
\begin{theorem}\label{teolct}
With the above notation, let $X$ be a relative hypersurface $X\in |\cO_{\pr}(k)\otimes \pi^*\cM^{-1}|$.

If $y/k\in (\mu, \mu_1]$ then any fibre $F$ of $f$ is singular, and
\begin{equation}\label{ineqlct}
lct(\Sigma, {X}_{|\Sigma}) < \frac{r}{k}.
\end{equation}
\end{theorem}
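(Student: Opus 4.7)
The plan is to chain Theorem \ref{CHqui} with the result of Lee cited as \cite{Lee}, which translates Chow instability of a hypersurface of degree $k$ in $\pr^{r-1}$ into an explicit upper bound on its log canonical threshold.

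First I would verify that Theorem \ref{CHqui} is applicable, i.e.\ that $\cE$ is $\mu$-unstable. Since $X$ is an effective divisor in the class $\cO_\pr(k)\otimes\pi^*\cM^{-1}$, this class is in particular pseudoeffective; Nakayama's criterion recalled in the introduction then forces $y/k\leq\mu_1$. Combined with the hypothesis $y/k>\mu$ we obtain $\mu<\mu_1$, so $\cE$ is strictly $\mu$-unstable. Applying Theorem \ref{CHqui} we conclude that \emph{every} fibre $F$ of $f$ is Chow unstable as a hypersurface of degree $k$ in the general fibre $\Sigma\cong\pr^{r-1}$, with respect to $\cO_F(h)$ for every $h\geq 1$.

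The heart of the argument is then to feed this Chow instability into Lee's theorem, which asserts that if a hypersurface of degree $k$ in $\pr^{r-1}$ is Chow unstable then $lct(\pr^{r-1},X_{|\Sigma})<r/k$. This yields (\ref{ineqlct}) directly. For the singularity statement, I would observe that a smooth hypersurface $F\subset\pr^{r-1}$ has $lct(\pr^{r-1},F)=1$; thus whenever $r/k\leq 1$ the strict inequality (\ref{ineqlct}) immediately forces every $F$ to be singular. In the remaining range $k<r$ one does not need the lct bound at all: the classical Mumford-type statement that smooth hypersurfaces are Chow stable already prevents smoothness of $F$ once Theorem \ref{CHqui} provides Chow instability.

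The main delicate point will be bookkeeping in Lee's theorem: one has to verify that the notion of Chow instability coming from our Cornalba--Harris/Bost criterion (Theorem \ref{corCH}) coincides with the one used in \cite{Lee}, and that the polarization $\cO_F(h)$ (rather than a specific normalization) gives the constant $r/k$ on the right of (\ref{ineqlct}). Once these conventions are matched, the theorem is a short chain of implications: effectivity of $X$ $\Rightarrow$ $\cE$ unstable $\Rightarrow$ Chow instability of every fibre $\Rightarrow$ lct bound, with the singularity of $F$ following either from the lct bound or directly from Chow instability.
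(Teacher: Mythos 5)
Your proposal is correct and follows essentially the same route as the paper, which proves the singularity statement in Proposition \ref{prop-sing} (Chow instability from Theorem \ref{corCH} versus Chow semistability of smooth hypersurfaces, \cite{FM}) and the lct bound in Theorem \ref{TEOlct} by applying the contrapositive of Lee's Theorem \ref{thmLEE} to each fibre, noting that for a hypersurface the Chow variety is the hypersurface itself so the constant is $r/k$. The only cosmetic difference is that the paper treats the range $k<r$ of the lct inequality as trivial (since $lct\leq 1<r/k$) and uses the Chow-semistability of smooth hypersurfaces for all $k$, whereas you split the smoothness argument according to whether $r/k\leq 1$; both variants are fine.
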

Thus we have a more restrictive geometric condition on the effective divisors in linear systems of the form $|\cO_{\pr}(k)\otimes \pi^*\cM^{-1}|$ such that $y/k>\mu$.
In particular we can make some conclusions on the singularities of $X$ itself
obtaining a (partial) Miyaoka-Nakayama type result (Theorem \ref{slc}):
\begin{theorem}\label{SLC}
 If $X\in |\cO_\pr(k)\otimes \pi^*\cM^{-1}| $ is smooth or is such that $lct(\pr, X)\geq r/k$,  then $y/k\leq\mu$.
\end{theorem}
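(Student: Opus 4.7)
The plan is to prove the contrapositive by a direct appeal to Theorem \ref{teolct}: assuming $y/k > \mu$, I will derive both that $X$ is singular and that $lct(\pr, X) < r/k$, which contradicts either hypothesis.

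First I would observe that the range of $y/k$ is controlled: since $X$ is an effective divisor in $|\cO_\pr(k)\otimes\pi^*\cM^{-1}|$, that line bundle is in particular pseudoeffective. By Nakayama's criterion recalled in the introduction, this forces $y/k\leq \mu_1$. Combined with the assumption $y/k>\mu$, we obtain $y/k\in(\mu,\mu_1]$, which is exactly the regime in which Theorem \ref{teolct} applies. Theorem \ref{teolct} then yields two conclusions: every fibre of $f\colon X\to B$ is singular, and for $\Sigma$ a general fibre of $\pi$ we have $lct(\Sigma,X_{|\Sigma})<r/k$.

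Next I would use the first conclusion to rule out that $X$ is smooth: if $X$ were smooth, then $f\colon X\to B$ would be a dominant morphism from a smooth variety to a smooth curve in characteristic zero, and generic smoothness would force the general fibre of $f$ to be smooth, contradicting the first assertion of Theorem \ref{teolct}. To handle the second alternative, I would use the standard Bertini-type monotonicity of log canonical thresholds along a general member of a base-point-free linear system: since the fibres of $\pi$ form the base-point-free system $|\pi^*\cO_B(p)|$, and $\Sigma\cong\pr^{r-1}$ is smooth, one has
\[
lct(\pr,X)\leq lct(\Sigma,X_{|\Sigma}).
\]
Combining with the strict inequality from Theorem \ref{teolct} gives $lct(\pr,X)<r/k$, contradicting the second hypothesis.

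The only step that is not essentially formal is the restriction inequality for $lct$ in the last paragraph; in the present setup it is immediate because a log resolution of $(\pr,X)$ restricts, for general $\Sigma$, to a log resolution of $(\Sigma,X_{|\Sigma})$ with the same (or smaller, in absolute value) discrepancies, so the set of log canonical coefficients can only grow upon restriction. Everything else in the argument is a direct application of previously stated results, so the main obstacle is conceptual—recognising that Theorem \ref{teolct} already packages both the singularity of the fibres and the bound on $lct$ in the relevant range of $y/k$, and that effectivity of $X$ automatically places us in that range via Nakayama.
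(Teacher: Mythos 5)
Your argument is correct and follows essentially the same route as the paper: the paper proves the contrapositive by combining Theorem \ref{TEOlct} with the Bertini-type restriction $lct(\pr,X)\leq lct(\Sigma,X_{|\Sigma})$ for a general fibre $\Sigma$ (via the discrepancy comparison of \cite[Prop.7.7]{Kollar}), exactly as you do. Your explicit remarks that effectivity forces $y/k\leq\mu_1$ (hence $\cE$ unstable and $y/k\in(\mu,\mu_1]$) and that the smooth case is excluded by generic smoothness against the singularity of the fibres (Proposition \ref{prop-sing}) are points the paper leaves implicit, but they do not change the approach.
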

\begin{remark}
Nakayama's result is often used for proving geographical inequalities, for instance in the relative hyperquadric method.
We believe that this strengthening we obtain can be applied to get stronger inequalities.
\end{remark}
There are some aspects worth discussing.
\begin{enumerate}
\item \label{2}It is natural to wonder if in some cases the implication of Theorem \ref{SLC} can be reversed.
Of course, being a statement on singularities, we can only ask  that the converse implication holds for a {\em general } $X$  in the linear system $|\cO_\pr(k)\otimes \pi^*\cM^{-1}| $.
\item \label{3} The inequality (\ref{ineqlct}) is particularly significant for high enough powers of line bundles, e.g. for  $|\cO_{\pr}(mk)\otimes \pi^*\cM^{-m}|$ with $m\gg 0$, and it gives a lower bound on the multiplicities of the {\em unbounded} points of the linear system. Thus one is lead to try to better understand the asymptotical     behaviour of the linear systems on $\pr$. For instance, does it always have a multiple component, or can its general member be irreducible? This is of course related to the study of the fixed locus of $|\cO_{\pr}(mk)\otimes \pi^*\cM^{-m}|$ for $m\gg0$.
\end{enumerate}
We start from the last questions, and try to get a better understanding of the fixed locus of the linear systems. A first simple study of the case when $\cE$ is of rank 2, in section \ref{rk2}, proves to be enlightening. From the Zariski decomposition of divisors in the ruled surface $\pr$ we can deduce a result analogous to Theorems \ref{teolct} and \ref{SLC}.
Using similar arguments, we see that for the case of arbitrary rank, if the Harder-Narasimhan filtration of $\cE$ has length $2$, and its first sheaf is of rank 1, then --applying a Zariski decomposition of the divisors on $X$ obtained in \cite{nakayama}-- we have for {\em any}  effective non nef divisor in $\pr$ a fixed component with a computable multiplicity.
Quite nicely, we verify that this implies precisely the bound of Theorem \ref{teolct} on the log canonical threshold of $X$.

We now push through this investigation, using again the Zariski decomposition of \cite{nakayama}, and study explicitly the schematic fixed locus of any divisor in the case $\ell=2$, thus completely understanding the locus of unbounded points of the big and not nef divisors in $\pr$ in this case.

Let us summarize the results obtained (Proposition \ref{zar} and Proposition \ref{fixed}).

\begin{theorem}
Suppose that $\ell=2$.
Let us consider the linear system $|\cO_\pr(km)\otimes \pi^*\cM^{-m}|$.
Suppose that $y/k>\mu_2$ (i.e. the system is big and not nef) and that $ (\mu_1-\mu_2)$ divides $(y-\mu_2 k)$.
Then the schematic fixed locus of $|\cO_\pr(km)\otimes \pi^*\cM^{-m}|$ contains the codimension $\rk \, \cE_1$ cycle
$$m\frac{y-\mu_2k}{\mu_1-\mu_2}\pr(\cE/\cE_1),$$
and asymptotically coincides with it.
\end{theorem}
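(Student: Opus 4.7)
The plan is to invoke Nakayama's relative Zariski-type decomposition \cite{nakayama} for pseudoeffective divisors on $\pr=\pr_B(\cE)$, made concrete through a blow-up of the canonical subvariety attached to the Harder--Narasimhan filtration. Write the length-two HN sequence $0\to\cE_1\to\cE\to\cE/\cE_1\to 0$ with semistable pieces of slopes $\mu_1>\mu_2$, put $r_1:=\rk\cE_1$, and let $Z:=\pr(\cE/\cE_1)\hookrightarrow\pr$ be the codimension-$r_1$ subvariety defined by the HN quotient. Put $D:=\cO_\pr(k)\otimes\pi^*\cM^{-1}$ and $\alpha:=(y-\mu_2 k)/(\mu_1-\mu_2)$; the hypotheses give $\alpha>0$ and $m\alpha\in\Z$. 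Let $\sigma\colon\widetilde\pr\to\pr$ be the blow-up of $\pr$ along $Z$, with exceptional divisor $E$: $\widetilde\pr$ is naturally the relative flag bundle $\mathrm{Fl}(\cE_1\subset\cE)$ and carries a second projection to $\pr(\cE_1)$. I would aim to show that
\[
\sigma^* D \;=\; P+\alpha E
\]
is a Zariski decomposition of $\sigma^*D$ with $P$ nef on $\widetilde\pr$.

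The choice of $\alpha$ is forced by nefness of $P$: intersecting $P$ with the two classes of extremal curves on $\widetilde\pr$ (fibres of $\sigma|_E\to Z$ and general lines in the quotient direction) reduces to Miyaoka's criterion \cite{miyaoka} applied to the semistable quotient $\cE/\cE_1$ of slope $\mu_2$, and is saturated precisely at $\alpha=(y-\mu_2 k)/(\mu_1-\mu_2)$. Since $P$ is nef, every global section of $\sigma^*(mD)=mP+m\alpha E$ vanishes to order $\ge m\alpha$ along $E$. Pushing forward by $\sigma_*$ and using that $Z$ is a local complete intersection of codimension $r_1$ in the smooth variety $\pr$ (so that the Koszul complex yields $\sigma_*\cO_{\widetilde\pr}(-jE)=\cI_Z^{j}$ for all $j\ge 0$), every global section of $\cO_\pr(km)\otimes\pi^*\cM^{-m}$ belongs to $\cI_Z^{m\alpha}\otimes\cO_\pr(km)\otimes\pi^*\cM^{-m}$. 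This exhibits the codimension-$r_1$ subscheme $V(\cI_Z^{m\alpha})=m\alpha\,\pr(\cE/\cE_1)$ as a subscheme of the schematic fixed locus of $|\cO_\pr(km)\otimes\pi^*\cM^{-m}|$.

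For the asymptotic coincidence I would exploit bigness of $P$ on $\widetilde\pr$ (the volume of $D$ is strictly positive in the regime $y/k\in(\mu_2,\mu_1]$). By Nakayama's asymptotic analysis, the augmented base locus of $P$ is contained in $E$, whose image under $\sigma$ is $Z$; hence for $m\gg 0$ the fixed locus of $|mD|$ equals $Z$ set-theoretically, while its schematic structure stabilises at $m\alpha\,Z$ because sections of $mP$ generate $\widetilde\pr$ off a proper subvariety. The main obstacle is the precise schematic control in the middle step: to pass from the divisorial decomposition upstairs to the correct multiplicity of $\cI_Z$ downstairs, one must verify $R^i\sigma_*\cO_{\widetilde\pr}(-jE)=0$ for $i>0$ over the relevant range of $j$, so that no section is lost in the pushforward. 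This is classical for blow-ups of l.c.i.\ centres, but must be combined with the projective-bundle structure of $E\to Z$ and the specific flag-variety geometry of $\widetilde\pr$, and appears to be the real technical content — beyond this, the theorem is a direct combination of Miyaoka's nefness criterion and Nakayama's Zariski decomposition already invoked in the preceding sections.
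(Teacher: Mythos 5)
Your overall route is the paper's route (blow up $Z=\pr(\cE/\cE_1)$, invoke Nakayama's decomposition, push down, then use nefness and bigness of the positive part for the asymptotic statement), but the central step is not justified as you state it. From ``$\sigma^*D=P+\alpha E$ with $P$ nef'' you conclude that every global section of $\sigma^*(mD)$ vanishes to order $\ge m\alpha$ along $E$. Nefness of the positive piece of an arbitrary nef-plus-effective splitting never forces this: take $D$ ample and subtract any effective divisor $E$ small enough that $D-\alpha E$ stays nef; sections of $mD$ do not vanish along $E$ at all. In dimension $\ge 3$ such a splitting is not a Zariski decomposition, and choosing $\alpha$ as the smallest number making $P$ nef (your Miyaoka/extremal-curve computation) only identifies the candidate. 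What the argument actually needs is that $\alpha E$ is the \emph{negative part of the $\sigma$-decomposition} of $\sigma^*D$, i.e. that
$h^0(\widetilde\pr, mP+jE)=h^0(\widetilde\pr,mP)$ for $0\le j\le m\alpha$, equivalently that
$\alpha=\inf_m \frac{1}{m}\operatorname{mult}_E\left|m\,\sigma^*D\right|$.
This is precisely the statement the paper quotes from \cite[Chap.~IV]{nakayama} in Propositions \ref{zar} and \ref{fixed}, and its proof does not come from intersecting with curves: it rests on the Harder--Narasimhan filtration of $\sym^{km}\cE$, namely that $H^0(B,\sym^{km}\cE\otimes\cM^{-m})$ can only receive contributions from the graded pieces $\sym^{a}\cE_1\otimes\sym^{km-a}(\cE/\cE_1)$ with $a(\mu_1-\mu_2)\ge m(y-k\mu_2)$, which is what produces the vanishing order $m\alpha$ along $E$. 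Without this (or an equivalent) input your proof has a genuine gap exactly where the multiplicity bound is born.

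A secondary point: you locate the ``real technical content'' in the vanishing of $R^i\sigma_*\cO_{\widetilde\pr}(-jE)$, but this is not where the difficulty lies. For the containment one only needs $\sigma_*\cO_{\widetilde\pr}(-jE)=\cI_Z^{\,j}$ (standard for the smooth center $Z=\pr(\cE/\cE_1)$), since a section vanishing to order $m\alpha$ along $E$ pushes down into $\cI_Z^{\,m\alpha}$ with no higher direct images involved. For the asymptotic coincidence, your phrase ``sections of $mP$ generate $\widetilde\pr$ off a proper subvariety'' needs a statement uniform in $m$; the paper gets this from $P$ nef and big together with Wilson's theorem (Theorem \ref{wilson}), as in Theorem \ref{teo-rk2}, which bounds the residual base locus by a fixed divisor independent of $m$ and hence shows the schematic fixed locus is asymptotically exactly $m\frac{y-\mu_2k}{\mu_1-\mu_2}\,\pr(\cE/\cE_1)$.
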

\begin{remark}
It is worth remarking here that the precise statement of the theorem above shuold be that {\em the fundamental cycle} of  the schematic fixed locus of the linear system contains the cycle above, but  we think no confusion should arise from this more concise notation.
\end{remark}
This result implies, in the case $\ell=2$, Theorems \ref{teolct} and \ref{SLC}; moreover, it can be used to prove a converse to these theorems for the case $\rk\,\cE_1=1$.
We then can use it to give an answer to the first question above, as follows (see Theorem \ref{teo-rk2}).
\begin{theorem}
Suppose that $\ell=2$. If  $\rk\, \cE_1>1$ strict inequality always holds in Theorems \ref{teolct} and \ref{SLC}.
If $\rk\,\cE_1=1$, for $m\gg0$ a  {\em general} $X_m\in |\cO_\pr(km)\otimes \pi^*\cM^{-m}|$ satisfies equality:
$$
lct(\pr,X_m)=lct(\Sigma, (X_m)_{|\Sigma})= \frac{\mu_1-\mu_2}{m(y-\mu_2k)}.
$$
In particular, we have that $$lct(\pr, X_m)<\frac{r}{mk} \quad
\iff\quad
lct(\Sigma, {X_m}_{|\Sigma}) < \frac{r}{mk}\quad
\iff\quad
\frac{y}{k}>\mu.
$$

\end{theorem}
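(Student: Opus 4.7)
The plan is to build on the explicit description of the fixed locus from Proposition \ref{fixed}. Set $V:=\pr(\cE/\cE_1)$, $c:=\rk\,\cE_1$, and $a:=m(y-\mu_2 k)/(\mu_1-\mu_2)$. By that proposition, every member $X_m$ of $|\cO_\pr(km)\otimes\pi^*\cM^{-m}|$ contains the scheme $aV$, and this is asymptotically the full fixed locus. I would split the argument according to $c$.

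When $c>1$, the locus $V$ has codimension $\geq 2$ in $\pr$, so it contributes no divisorial fixed component; nevertheless the scheme-theoretic containment means that near a general point of $V$, in local coordinates with $V=\{x_1=\cdots=x_c=0\}$, the defining equation of any $X_m$ lies in $(x_1,\ldots,x_c)^a$. The standard bound on the log canonical threshold in terms of multiplicity along a smooth subvariety then yields $lct(\pr,X_m)\leq c/a$, and restricting to a general fibre $\Sigma\simeq\pr^{r-1}$ (on which $V\cap\Sigma$ is a linear subspace of codimension $c$) gives $lct(\Sigma,X_m|_\Sigma)\leq c/a$. A direct manipulation using $\deg\cE=c\mu_1+(r-c)\mu_2$ shows $c/a<r/(mk)\iff y/k>\mu$, which is precisely the hypothesis in Theorems \ref{teolct} and \ref{SLC}, so those inequalities are strict as claimed.

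When $c=1$, the locus $V$ is a divisor, so I would write $X_m=aV+M_m$, where $M_m$ is cut out by a section of the residual linear system. The ``asymptotically coincides with it'' clause of Proposition \ref{fixed} guarantees that this residual system is base point free for $m\gg 0$, so Bertini gives that for a general $X_m$ the divisor $M_m$ is smooth and meets $V$ transversally. Then $X_m$ is a simple normal crossing divisor with coefficients $(a,1)$, and the SNC formula yields $lct(\pr,X_m)=\min(1/a,1)=1/a$. The same argument on a general fibre $\Sigma$, using that $V\cap\Sigma$ is a hyperplane of $\Sigma\simeq\pr^{r-1}$ (since $\pr(\cE/\cE_1)$ fibers as hyperplanes in $\pr(\cE)$), gives $lct(\Sigma,X_m|_\Sigma)=1/a$. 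Substituting the value of $a$ produces the stated explicit formula, and the ``In particular'' equivalences then reduce to the same arithmetic manipulation as in the $c>1$ case: $1/a<r/(mk)\iff y/k>\mu$.

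I expect the main technical hurdle to be the $c=1$ equality $lct(\pr,X_m)=1/a$ rather than merely $\leq 1/a$: one needs to upgrade the asymptotic statement of Proposition \ref{fixed} into genuine base point freeness of the residual linear system for $m\gg 0$, both globally on $\pr$ and in restriction to a general fibre, so that the Bertini argument truly produces a simple normal crossing configuration and gives equality. The rest is local intersection-theoretic bookkeeping and the purely arithmetic translation between $y/k$, the slopes $\mu,\mu_1,\mu_2$ and the ranks $c,r-c$.
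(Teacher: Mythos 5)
Your argument splits along the two cases exactly as the paper does, but both halves diverge from the paper's proof at the decisive points, and in each case the divergence is a genuine gap. For $\rk\,\cE_1=1$, your equality $lct(\pr,X_m)=lct(\Sigma,(X_m)_{|\Sigma})=\frac{\mu_1-\mu_2}{m(y-\mu_2k)}$ rests on upgrading the ``asymptotically coincides'' clause into genuine base point freeness of the residual system $|X_m-aV|$ for $m\gg0$, so that Bertini yields an SNC divisor $aV+M_m$. That upgrade is not available: the positive part of the $\sigma$-decomposition has numerical class proportional to $H-\mu_2\Sigma$, which lies on the boundary of the nef cone, and Remark \ref{bpf} of the paper is devoted precisely to the fact that this class need not be semiample, so no multiple of the residual system need become free. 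The paper's proof of Theorem \ref{teo-rk2} circumvents this differently: the positive part is big and nef, so Wilson's Theorem \ref{wilson} writes its multiples as a free system plus a fixed effective divisor independent of $m$; hence the lct contribution of the moving part of a general $X_m$ is bounded below by a constant, while the component $aV$ forces $lct\le 1/a\to 0$, so for $m\gg0$ the lct (on $\pr$ and, via the Bertini-type restriction inequality of Theorem \ref{slc}, on $\Sigma$) is computed by the fixed multiple of $V$ alone. As written, your SNC/Bertini step is exactly the one the paper warns cannot be carried out, so the $\rk\,\cE_1=1$ equality is not established.

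For $\rk\,\cE_1=c>1$ your multiplicity estimate along the codimension-$c$ centre gives only $lct\le c/a$, and your own arithmetic shows $c/a<r/(mk)\iff y/k>\mu$. This merely re-derives the inequalities of Theorems \ref{TEOlct} and \ref{slc} under their original hypothesis, and it cannot be what the first sentence asserts: with the bound $c/a$ the threshold sits exactly at $\mu$ for every $c$, so nothing distinguishes $c>1$ from $c=1$. The paper's proof (last part of Proposition \ref{fixed}, and Corollary \ref{cor-fix}) claims the stronger bound $lct\le\frac{\mu_1-\mu_2}{m(y-\mu_2k)}$, with no factor $c$, obtained by restricting to iterated general hyperplane sections $\Sigma_\alpha$ of the fibre; since that quantity is below $\frac{r}{mk}$ already for $y/k>\frac{\mu_1+(r-1)\mu_2}{r}$, which is strictly smaller than $\mu$ precisely when $c>1$, it is this removal of the factor $c$ that produces strict inequality ``always'' (in particular at and below $y/k=\mu$) and hence the contrast with the sharpness statement of Corollary \ref{sharpness} in the $c=1$ case. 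Your proposal is missing that step entirely; note that $c/a$ is exactly what the direct multiplicity computation along $\pr(\cE/\cE_1)$ yields, and that in the paper's restriction argument the restricted centre still has codimension $c$ rather than $1$, so reproducing (or repairing) the claim that it contributes as a component of multiplicity $a$ is the delicate point one must address to prove the $\rk\,\cE_1>1$ assertion — it does not follow from the bookkeeping you propose.
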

From this result we deduce in particular  that  inequality (\ref{ineqlct}) of  Theorem \ref{teolct} is sharp (Corollary \ref{sharpness}).

\medskip
With the same approach, we can also treat the case of Harder-Narasimhan sequences of arbitrary length.
We obtain  an analogous result, which involves  the successive slopes $\mu_i$'s of the Harder-Narasimhan sequence of $\cE$
(ref. Theorem \ref{codim-fix}).
Let $|\cO_\pr(k)\otimes \pi^*\cM^{-1}|$ be a big and not nef linear system on $\pr$, and let $j$ be the integer between $1$ and $\ell-1$ such that  $\mu_{j+1}\leq y/k< \mu_{j}$.
\begin{theorem}\label{codim-FIX}
With the notation above, suppose that  $ (\mu_1-\mu_{j+1})$ divides $(y-\mu_{j+1} k)$.
The schematic fixed locus of $|\cO_\pr(k)\otimes \pi^*\cM^{-1}|$ contains the codimension $\rk\, \cE_j$ cycle
$$\frac{y-\mu_{j+1}k}{\mu_1-\mu_{j+1}}\pr(\cE/\cE_j),$$
and asymptotically coincides with it.
\end{theorem}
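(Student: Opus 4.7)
The plan is to extend the Zariski decomposition arguments for length-$2$ Harder--Narasimhan filtrations (Propositions \ref{zar}--\ref{fixed}) to arbitrary length $\ell$ by induction on $\ell$, applying Nakayama's decomposition at each step. The starting observation is that the quotient $\cE\twoheadrightarrow\cE/\cE_j$ realizes $\pr_B(\cE/\cE_j)\hookrightarrow\pr_B(\cE)$ as the zero locus of the regular section of $\pi^*\cE_j^\vee\otimes\cO_\pr(1)$ induced by $\cE_j\hookrightarrow\cE$; hence $\pr_B(\cE/\cE_j)$ is a local complete intersection of codimension $\rk\,\cE_j$ in $\pr_B(\cE)$, with a well-defined cycle class. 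The hypothesis $\mu_{j+1}\leq y/k<\mu_j$ places the divisor $D=k\xi-yF$ in the regime where it is pseudoeffective on $\pr_B(\cE)$ but whose restriction to $\pr_B(\cE/\cE_j)$ just reaches the nef boundary, since the top HN slope of $\cE/\cE_j$ is exactly $\mu_{j+1}$.

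For the lower bound on the vanishing order along $\pr_B(\cE/\cE_j)$, I would induct on $\ell$. The base case $\ell=2$ is Proposition \ref{fixed}. For $\ell\geq 3$, I would restrict $D$ to $\pr_B(\cE/\cE_1)$, whose governing sheaf has HN filtration of length $\ell-1$ and top slope $\mu_2$; the induction hypothesis then bounds the vanishing along $\pr_B(\cE/\cE_j)\subset\pr_B(\cE/\cE_1)$. Combining this with the separate vanishing along $\pr_B(\cE/\cE_1)$ itself---obtained from the $j=1$ case of Nakayama's decomposition, which is a direct generalization of the rank-$1$ subcase of Proposition \ref{fixed}---produces the total vanishing order. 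The expected coefficient $n=(y-\mu_{j+1}k)/(\mu_1-\mu_{j+1})$ then emerges from a direct slope computation combining the two contributions, in which the intermediate slopes $\mu_2,\ldots,\mu_j$ cancel telescopically.

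For the asymptotic coincidence, I would use the divisibility hypothesis $(\mu_1-\mu_{j+1})\mid(y-\mu_{j+1}k)$ to ensure that $nm\in\Z$, and then show that the residual system obtained by subtracting $nm\cdot[\pr_B(\cE/\cE_j)]$ restricts to $\pr_B(\cE/\cE_j)$ as a class of slope $\mu_{j+1}$; since $\mu_{j+1}$ is the top HN slope of $\cE/\cE_j$, this restriction is nef by Miyaoka's criterion and, for $m\gg 0$, base-point free on $\pr_B(\cE/\cE_j)$. The main obstacle, I expect, is ruling out additional asymptotic multiplicity from deeper strata $\pr_B(\cE/\cE_i)$ with $i>j$ beyond what is already encoded in $n\cdot\pr_B(\cE/\cE_j)$; this requires combining the nefness of the residual on $\pr_B(\cE/\cE_j)$ with an iterated application of Nakayama's decomposition to pin down the support of the negative part precisely.
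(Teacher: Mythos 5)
Your proposal identifies the right toolbox (Nakayama's $\sigma$-decomposition, the nested subbundles $\pr(\cE/\cE_i)$, nefness of the positive part for the asymptotic statement), but the specific route has genuine gaps. First, the inductive lower bound on the vanishing order does not work as described. Restricting a member $X$ to $\pr(\cE/\cE_1)$ and invoking the induction hypothesis there only controls $\mathrm{mult}_Z\bigl(X_{|\pr(\cE/\cE_1)}\bigr)$ for $Z=\pr(\cE/\cE_j)$, and restriction to a subvariety can only \emph{increase} the vanishing order along $Z$; so a lower bound for the restriction gives no lower bound on $\mathrm{mult}_Z(X)$ computed in $\pr$, which is what the theorem asserts. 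Moreover, when $j\geq 2$ one has $y/k<\mu_j\leq\mu_2$, so there is no forced vanishing along $\pr(\cE/\cE_1)$ at all and there are no ``two contributions'' to add; and the target coefficient $(y-\mu_{j+1}k)/(\mu_1-\mu_{j+1})$ involves $\mu_1$, which is invisible after restricting to $\pr(\cE/\cE_1)$ (the induction hypothesis there would naturally produce the denominator $\mu_2-\mu_{j+1}$ instead). The ``telescoping cancellation'' is never exhibited, and I do not see how it could repair the wrong-way inequality.

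Second, the asymptotic step is not well posed: $\pr(\cE/\cE_j)$ has codimension $\rk\,\cE_j\geq 2$ in general, so one cannot ``subtract $nm\,[\pr(\cE/\cE_j)]$'' from the linear system; and the nefness you invoke is based on the wrong slope, since by Miyaoka nefness on $\pr(\cE/\cE_j)$ is governed by the \emph{minimal} slope $\mu_\ell$ of $\cE/\cE_j$, not its top slope $\mu_{j+1}$, so a residual class with $y/k=\mu_{j+1}$ restricted to $\pr(\cE/\cE_j)$ sits on the pseudoeffective boundary, not in the nef cone (unless $j=\ell-1$). The paper avoids both problems by passing to the iterated blow-up $\pr(\cE_1\subset\cdots\subset\cE_\ell)\stackrel{\rho}{\longrightarrow}\pr$ and quoting Nakayama's Lemma 3.11(1): there the negative part of the Zariski decomposition of $\rho^*X$ is the honest divisor $\sum_i\alpha_iE_i$ with $\alpha_i=(y-k\mu_{i+1})/(\mu_1-\mu_{i+1})$, which immediately gives multiplicity $\alpha_j$ along $\pr(\cE/\cE_j)$ (the earlier blow-up centres lie in deeper strata, hence do not affect the generic point of $\pr(\cE/\cE_j)$), and the positive part is nef \emph{on the blow-up}, from which the asymptotic coincidence follows exactly as in Theorem \ref{teo-rk2} via Wilson's theorem. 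Your closing remark that one needs ``an iterated application of Nakayama's decomposition to pin down the support of the negative part'' is precisely this missing input: as written, your outline defers the crucial point rather than proving it.
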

This implies the following  result on the log canonical threshold of the couple $(\pr,X)$ (Corollary \ref{cor-fix}).
\begin{corollary}\label{COR-fix}
In the situation of Theorem \ref{codim-FIX}, we have that, for any $X\in |\cO_\pr(k)\otimes \pi^*\cM^{-1}|$,
$$
lct(\pr, X)\leq \frac{\mu_1-\mu_{j+1}}{y-k\mu_{j+1}}.
$$
\end{corollary}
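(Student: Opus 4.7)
Fix any $X\in|\cO_\pr(k)\otimes \pi^*\cM^{-1}|$ and write $Z=\pr(\cE/\cE_j)$, $c=\rk\cE_j$, and $m=(y-k\mu_{j+1})/(\mu_1-\mu_{j+1})$.  The plan is to deduce the bound directly from Theorem~\ref{codim-FIX} via a reduction to the log canonical threshold of the scheme-theoretic fixed locus of the linear system.

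First, Theorem~\ref{codim-FIX} provides the schematic fixed locus $\mathfrak b$ of $|\cO_\pr(k)\otimes \pi^*\cM^{-1}|$, which is contained in $X$ as a subscheme of $\pr$ and whose fundamental cycle has $m\cdot[Z]$ as a summand.  Since the log canonical threshold is monotone under inclusion of ideal sheaves, we have $lct(\pr,X)\leq lct(\pr,\mathfrak b)$, so it suffices to prove $lct(\pr,\mathfrak b)\leq 1/m$.  I would then pass to the blow-up $\sigma\colon\widetilde\pr\to\pr$ of the smooth centre $Z$, with exceptional divisor $E$ and relation $K_{\widetilde\pr/\pr}=(c-1)E$, and control the discrepancy at the valuation $\mathrm{ord}_E$ using the multiplicity of $\mathfrak b$ along $Z$.

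The main technical obstacle is extracting the sharp factor $1/m$ rather than the weaker $c/m$ that a naive single-blow-up computation yields (namely $E$ has discrepancy $c-1$ while $\mathfrak b$ has multiplicity $m$ along $Z$, giving $(1+(c-1))/m=c/m$).  The improvement to $1/m$ has to be extracted from the asymptotic equality statement in Theorem~\ref{codim-FIX}—that the fixed scheme of $|\cO_\pr(Nk)\otimes \pi^*\cM^{-N}|$ coincides with the $Nm$-multiple of $Z$ for $N\gg 0$—together with the homogeneity of the log canonical threshold of an ideal under the operation of raising to a power.  In the rank-one case $c=1$ treated earlier in the paper the bound is immediate, because $Z$ is itself a divisor and $X\geq mZ$; in the general case one reduces to that configuration by using an appropriate line subbundle filtration refining $\cE_j$ inside $\cE$, which on a suitable blow-up produces a genuine divisorial fixed component of multiplicity $m$ in place of the codimension-$c$ cycle $m[Z]$.
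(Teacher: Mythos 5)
Your reduction $lct(\pr,X)\leq lct(\pr,\mathfrak{b})$, where $\mathfrak{b}$ is the base ideal of the system, is fine, and you correctly isolate the crux: blowing up $Z=\pr(\cE/\cE_j)$, with $c=\rk\,\cE_j$ and $\alpha_j=(y-k\mu_{j+1})/(\mu_1-\mu_{j+1})$, only yields $lct(\pr,X)\leq c/\alpha_j$. But neither of the two mechanisms you propose bridges the gap to $1/\alpha_j$. First, the asymptotic statement combined with homogeneity runs the wrong way: from $\mathfrak{b}_1^N\subseteq\mathfrak{b}_N$ one gets $lct(\pr,\mathfrak{b}_1)\leq N\,lct(\pr,\mathfrak{b}_N)$, and if the fixed scheme of $|\cO_\pr(Nk)\otimes\pi^*\cM^{-N}|$ asymptotically coincides with the $N\alpha_j$-fold thickening of the smooth codimension-$c$ centre $Z$, then $lct(\pr,\mathfrak{b}_N)=c/(N\alpha_j)$, so this route reproduces exactly $c/\alpha_j$; worse, your intermediate target $lct(\pr,\mathfrak{b})\leq 1/\alpha_j$ is false for $c\geq 2$ precisely in the asymptotic regime you invoke, since $lct(\pr,\cI_Z^{N\alpha_j})=c/(N\alpha_j)>1/(N\alpha_j)$. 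Second, a line-subbundle filtration refining $\cE_j$ cannot manufacture a divisorial fixed component: sub-line-bundles of $\cE_j$ other than Harder--Narasimhan pieces are not canonical and impose no condition on members of the linear system, and on any blow-up the only fixed divisor you obtain is exceptional, whose discrepancy ($c-1$ for the blow-up of $Z$) is exactly the source of the factor $c$. So the decisive step of your plan, improving $c/\alpha_j$ to $1/\alpha_j$, is not actually carried out.

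For comparison, the paper's own proof does not pass through the base ideal: it restricts $X$ to a general fibre $\Sigma$ and then to successive general hyperplane sections, using the Bertini-type monotonicity of discrepancies as in Theorem \ref{slc}, until the trace of $\pr(\cE/\cE_j)$ becomes a point of multiplicity at least $\alpha_j$ on a $\rk\,\cE_j$-dimensional slice (the ``same reasoning as in Proposition \ref{fixed}''). Note, however, that the standard estimate at a point of multiplicity $\alpha_j$ in dimension $n$ is $lct\leq n/\alpha_j$, so that route as well gives the constant $1/\alpha_j$ only when the trace is divisorial, i.e.\ when $\rk\,\cE_j=1$ as in Proposition \ref{zar}. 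What both approaches establish without further input is $lct(\pr,X)\leq \rk\,\cE_j\,(\mu_1-\mu_{j+1})/(y-k\mu_{j+1})$; to obtain the sharper constant claimed in the Corollary one needs information genuinely beyond the multiplicity of the fixed cycle along $Z$, and your proposal does not supply it.
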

In the case $\ell>2$ this result  -although similar to Theorem \ref{CHqui}- does not imply it, neither is implied by it.

\medskip

Theorem \ref{codim-FIX} tells us  that the case of $\rk\, \cE_1=1$ is indeed the only case when we can asimptotically have a fixed component in the linear systems of line bundles on $\pr$. Moreover, we can see that in most cases the general divisor $X$ in the  linear system $ |\cO_\pr(k)\otimes \pi^*\cM^{-1}|$ is irreducible (Proposition \ref{irreducible}).

Using this results we can explicitly compute the movable cone of $\pr$ (Proposition \ref{movable}), thus reproving a particular case of a result due to Fulger and Lehmann \cite[Prop. 7.1]{sti-due}.

\bigskip

It is natural to try and understand the case of  higher codimensional cycles in $\pr$.
In this case the  nef and pseudoeffective cones have been completely computed by Fulger in \cite{fulger}.
In \cite{BShyper} we generalize as far as possible the present results to the case of relative complete intersections.

\subsubsection*{Acknowledgements} We wish to thank Valentina Beorchia and Francesco Zucconi for their kind encouragement. 
We also thak Yongnam Lee for his interest in this work. Moreover, we are grateful to the anonymous referees for having helped us to considerably improve this paper.

\section{Inequalities for invariants of relative hypersurfaces}\label{sec-inequalities}

\begin{assumption}\label{ass}
Let $\cE$ be a rank $r\geq 3$ vector bundle over a smooth curve $B$,  let
$\pr:=\pr_B(\cE)$ be the relative projective bundle (of quotients), with the natural fibration $\pi\colon \pr\longrightarrow B$.
Let $X\subset \pr$ be a relative (possibly singular) hypersurface.
In other words for us $X$ is an effective divisor
in the linear  system of the form $|\cO_\pr(k)\otimes \pi^*\cM^{-1}|$, with $k>0$ and $\cM$ any invertible  sheaf  on $B$, whose degree we denote $y$. Call $f\colon X\longrightarrow B$ the restriction of $\pi$ to $X$.
 Let $\cO_X(1)$ be the sheaf  $j^*\cO_\pr(1)$ on $X$.
 Recall that $ \pi_*\cO_\pr(h)\cong \sym^h\pi_*\cO(1)\cong \sym^h\cE$ (see for instance \cite[Chap.II, Prop.7.11]{Har}).
 \end{assumption}

 \begin{remark}\label{relative-canonical}
 Under these assumptions, recall that the relative canonical sheaf $\omega_\pi=\omega_\pr\otimes \pi^*\omega_B^{-1}$ of $\pr$ is isomorphic to $\cO_\pr(-r)\otimes \pi^*\!\!\det \cE$.
 \end{remark}


\subsection{Slope inequality}
Our first aim is to study wether or not a slope inequality of the form (\ref{slopeinequality})  holds for the fibration $f\colon X\rightarrow B$.
%
%
Let us start with the following observation.
\begin{remark}\label{vanishing}
With the above notation,
if  $h$ is a positive integer, we have the following short exact sequence of sheaves:
\begin{equation}\label{sequenza}
0\rightarrow \pi_*\cO_\pr(h-k)\otimes \cM\rightarrow \pi_*\cO_\pr(h)\rightarrow f_*\cO_X(h)\rightarrow 0.
\end{equation}
In particular if $h<k$, the left hand sheaf vanishes, and we have isomorphisms of sheaves
$$
f_*\cO_X( h)\cong \pi_*\cO_\pr(h)\cong \sym^h\cE.
$$
Indeed, just tensor the sequence
$$
0\rightarrow \cO_\pr(-X)\rightarrow \cO_\pr\rightarrow \cO_X\rightarrow 0
$$
with $\cO_\pr(h)$, then push it forward via $\pi$, and observe that  the sheaf $R^1\pi_*\cO_\pr(h-k)$ vanishes.
\end{remark}

\begin{theorem}\label{uno}
With the notations above, let $X\in |\cO_\pr(k)\otimes \pi^*\cM^{-1}|$. Then the relative canonical sheaf $\omega_f$ is relatively very ample if and only if $k>r$.
Let us suppose that this is the case. The following are equivalent:
\begin{itemize}
\item[(i)] $y/k\leq \mu$ (resp. $y/k< \mu$);
\item[(ii)] the relative canonical invariants $K_f^{r-1}$ and $\deg f_*\omega_f$ are non-negative (resp. strictly positive);
\item[(iii)] the slope inequality (\ref{slopeinequality}) (resp. strict inequality) holds.
\end{itemize}
In particular  if $\cE$ is $\mu$-semistable then any  relative hypersurface $X$ of relative degree  greater or equal to $ r$ satisfies the slope inequality.
\end{theorem}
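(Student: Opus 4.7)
The plan is to reduce everything to direct intersection-theoretic computations on $\pr=\pr_B(\cE)$, starting from an explicit formula for $\omega_f$. By adjunction and Remark~\ref{relative-canonical},
\[
\omega_f \;=\; (\omega_\pi\otimes \cO_\pr(X))|_X \;=\; \cO_X(k-r)\otimes f^*(\det\cE\otimes\cM^{-1}).
\]
The pulled-back factor is $f$-trivial, so $\omega_f$ is relatively very ample exactly when $\cO_X(k-r)$ is, that is, precisely when $k>r$. For the rest of the proof assume $k>r$.

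For $K_f^{r-1}$: expand the self-intersection of $\omega_f$ using the displayed formula; since the square of a pullback from a curve vanishes, only two terms survive. Evaluating them via $X\equiv kH-yF$ together with $H^r=d$, $H^{r-1}F=1$, $F^2=0$ (where $H=c_1(\cO_\pr(1))$ and $F$ denotes a fibre of $\pi$), a routine algebraic simplification yields
\[
K_f^{r-1}\;=\;rk(k-1)(k-r)^{r-2}\!\left(\mu-\tfrac{y}{k}\right).
\]
For $\deg f_*\omega_f$: apply the sequence (\ref{sequenza}) with $h=k-r$, so $h-k=-r$. Since the fibres of $\pi$ are $\pr^{r-1}$ and $r\geq 3$, both $\pi_*\cO_\pr(-r)$ and $R^1\pi_*\cO_\pr(-r)$ vanish, so $f_*\cO_X(k-r)\cong \sym^{k-r}\cE$. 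The projection formula together with the standard identities $\rk\sym^{k-r}\cE=\binom{k-1}{r-1}$ and $\deg\sym^{k-r}\cE=\tfrac{k-r}{r}\binom{k-1}{r-1}d$ then gives
\[
\deg f_*\omega_f \;=\; k\binom{k-1}{r-1}\!\left(\mu-\tfrac{y}{k}\right).
\]
Since $(k-r)>0$, both invariants have the sign of $\mu-y/k$, which establishes (i)$\Leftrightarrow$(ii).

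For (iii): a smooth fibre $F$ is a degree-$k$ hypersurface in $\pr^{r-1}$, so $\omega_F=\cO_F(k-r)$ gives $K_F^{r-2}=k(k-r)^{r-2}$, while the restriction sequence on $\pr^{r-1}$ (combined with the vanishing of $H^1(\cO_{\pr^{r-1}}(-r))$ for $r\geq 3$) gives $p_g(F)=\binom{k-1}{r-1}$. Inserting these and the formula for $\deg f_*\omega_f$ into the right-hand side of (\ref{slopeinequality}) and comparing with the expression for $K_f^{r-1}$, the slope inequality collapses to $(k-r)(\mu-y/k)\geq 0$, which is equivalent to (i) since $k>r$. Finally, if $\cE$ is $\mu$-semistable and the linear system is nonempty, a nonzero global section of $\cO_\pr(k)\otimes\pi^*\cM^{-1}$ corresponds to an injection $\cM\hookrightarrow \sym^k\cE$; semistability of $\sym^k\cE$ then forces $y\leq k\mu$, placing us in case (i). The borderline case $k=r$ is immediate, since $\omega_f$ is then a pullback from $B$ and both sides of (\ref{slopeinequality}) vanish. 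The only real obstacle is careful bookkeeping of the binomial coefficients and signs; there are no conceptual difficulties beyond this.
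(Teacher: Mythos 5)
Your proof is correct and follows essentially the same route as the paper: adjunction plus the computation of $K_f^{r-1}$ and $\deg f_*\omega_f$ via $f_*\cO_X(k-r)\cong\sym^{k-r}\cE$, with the slope inequality reducing to the sign of $\mu-y/k$ once $k>r$. The only divergence is the final semistability statement, where the paper quotes Nakayama's description of the pseudoeffective cone, while you argue more elementarily that a section of $\cO_\pr(k)\otimes\pi^*\cM^{-1}$ gives an injection $\cM\hookrightarrow\sym^k\cE$ and use semistability of symmetric powers (valid in characteristic zero) to get $y\leq k\mu$; you also explicitly settle the borderline case $k=r$, which the paper leaves implicit. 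One cosmetic point: you need not (and should not) assume the fibre $F$ smooth when computing $K_F^{r-2}$ and $p_g(F)$, since every fibre is a degree-$k$ hypersurface in $\pr^{r-1}$ and the same adjunction and restriction-sequence computation applies verbatim to its dualizing sheaf.
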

\begin{proof}
As the relative canonical sheaf of $\pi$ is $\omega_\pi= \cO_\pr(-r)\otimes \pi^*\!\!\det \cE$ (Remark \ref{relative-canonical}), by adjunction we have that
$$K_f\equiv (K_\pi+X)_{|X}\equiv ((k-r)H-(y-d)\Sigma)_{|X}\equiv (k-r)L-(y-d)F$$

\noindent where $H=[\cO_X(1)]$ and $\Sigma$ is the general fibre of $f$.

From this we get immediately that the restriction of $\omega_f$ is ample (and indeed very ample) if and only if $k>r$.
Then we have that
$$
K_f^{r-1}=(k-r)^{r-2}(k-1)\left(kd-ry\right),
$$
while
$$
f_*\omega_f\cong f_*(\cO_X(k-r)\otimes f^*\cD)\cong  \left(f_*\cO_X(k-r)\right)\otimes \cD, $$
where $\cD$ is a line bundle over $B$ of degree $d-y$.
So, by Remark \ref{vanishing} above, we have that
$$f_*\omega_f\cong \sym^{k-r}\cO_X(1) \otimes \cD\cong \sym^{k-r}\cE \otimes \cD,$$
 and we can easily compute its degree as follows.
$$
\begin{array}{l}
\deg f_*\omega_f= \deg \sym^{k-r}\cE+\rk\, (\sym^{k-r}\cE)(d-y)= \\
\\
\binom{k-1}{r}d + \binom{k-1}{r-1}(d-y)=\binom{k-1}{r-1}\left(\frac{kd-ry}{r}\right).\\
\end{array}
$$
\smallskip

\noindent Supposing $k>r$ as in the statement, it is immediate that the relative canonical invariants are non-negative if and only if $y/k\leq \mu$. We have equality
$$
K_f^{r-1}=r(k-1) \frac{(k-r)^{r-2}}{\binom{k-1}{r-1}}\deg f_*\omega_f.
$$
The slope inequality (\ref{slopeinequality}) thus holds if and only if
$$
r(k-1)\frac{(k-r)^{r-2}}{\binom{k-1}{r-1}}\geq (r-1) \frac{K_F^{r-2}}{h^0(F, \omega_F)} =(r-1)k\frac{(k-r)^{r-2}}{\binom{k-1}{r-1}},
$$
equivalently if and only if
$$
\frac{r}{r-1}\geq \frac{k}{k-1},
$$
and this last inequality is satisfied, as $k>r$ by assumption.

As for the last statement, it follows straight away from Nakayama's result \cite{nakayama} mentioned in the Introduction. In particular if $\cE$ is $\mu$-semistable, $\mu=\mu_1=\mu_\ell$ and if a divisor of the form $kH-y\Sigma$ is effective then $y/k\leq\mu$.
\end{proof}

\begin{remark}
The proof of this result is elementary. Nevertheless this is the only known case where Conjecture \ref{conj-slope} is proved to be true in dimension higher then 2. In \cite[Theorem 1.2]{BS2} we proved a much weaker result which much more effort, we can say that there we shoot a fly with a cannon!
\end{remark}

\begin{remark}
It is important to stress that the total space $X$ in Theorem \ref{uno} can be extremely singular: in the sequel we shall indeed investigate its possible singularities. Theorem \ref{uno} provides new results also in dimension $2$.
For example, it proves the slope inequality for families of plane curves which can be extremely singular: in particular  the total space need not even be normal.
\end{remark}

 \subsection{$f$-positivity of $\cO_X(h)$}
After settling the slope inequality, it is natural to investigate the $f$-positivity of the sheaf $\cO_{X}(1)$ and of its powers $\cO_{X}(h)$ for $h\geq 1$.
The answer turns out to be very simple: $f$-positivity holds for any $h>0$ as soon as the inequality $y/k\leq \mu$ is satisfied.

\begin{theorem}\label{due}
With the notations above. If $k\geq 2$, the following statements are equivalent
\begin{itemize}
\item[(1)] $y/k\leq \mu$;
\item[(2)] there exist an $h>0$ such that $\cO_{X}(h)$ is $f$-positive;
\item[(3)] $\cO_{X}(h)$ is $f$-positive, for any $h>0$.
\end{itemize}

If $k=1$ then ${\cal O}_X(1)$ is always $f$-positive and for $h\geq 2$ $\cO_{X}(h)$ is $f$-positive if and only if $y\leq \mu$.
\end{theorem}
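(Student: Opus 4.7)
My plan is to substitute the four ingredients of inequality~(\ref{f-pos}) with $\cL=\cO_X(h)$ and $n=r-1$ into a single expression, simplify, and check that $f$-positivity reduces to a factorised inequality whose sign is governed by $k\mu-y$ times a combinatorial factor depending only on $r,h,k$.

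For the ingredients I would first use intersection theory on $\pr$, together with the linear equivalence $X\equiv kH-y\Sigma$ and the standard relations $H^{r}=d$, $H^{r-1}\Sigma=1$, $\Sigma^{2}=0$, to obtain $\cO_X(h)^{r-1}=h^{r-1}(kd-y)$. The general fibre $F$ of $f$ is a degree-$k$ hypersurface in $\pr^{r-1}$, so $\cO_F(h)^{r-2}=k h^{r-2}$; the standard exact sequence on $\pr^{r-1}$ and the vanishing $H^{1}(\pr^{r-1},\cO(j))=0$ (which holds because $r\geq 3$) give $h^{0}(F,\cO_F(h))=a_{h}-b_{h}$, where $a_{h}:=\binom{h+r-1}{r-1}$ and $b_{h}:=\binom{h-k+r-1}{r-1}$ (with $b_{h}=0$ for $h<k$). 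Finally, the short exact sequence~(\ref{sequenza}) of Remark~\ref{vanishing}, together with the classical formula $\deg\sym^{m}\cE=\binom{m+r-1}{r-1}\cdot md/r$, yields
$$\deg f_*\cO_X(h)\ =\ \frac{d}{r}\bigl(h\, a_{h}-(h-k)b_{h}\bigr)-y\, b_{h}.$$

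The algebraic core of the proof is then to substitute these four quantities into~(\ref{f-pos}), clear the common factor $h^{r-2}/(a_{h}-b_{h})$, and collect the coefficients of $d$ and $y$ separately. After routine bookkeeping I expect the inequality to collapse cleanly to
$$\bigl[h\, a_{h}-b_{h}\bigl(h+(r-1)k\bigr)\bigr]\,\bigl(k\mu-y\bigr)\ \geq\ 0,\qquad \mu=d/r.$$
Since the combinatorial factor $P(h,k,r):=h\, a_{h}-b_{h}(h+(r-1)k)$ involves only $r,h,k$, the equivalence of (1), (2), (3) reduces to verifying that $P>0$ whenever $k\geq 2$ and $h\geq 1$; in that regime, $f$-positivity of $\cO_X(h)$ holds if and only if $k\mu-y\geq 0$, a criterion independent of $h$, from which the three-fold equivalence is immediate.

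The hard point is this combinatorial positivity. For $h<k$ we have $b_{h}=0$ and trivially $P=h\,a_{h}>0$. For $h\geq k$ the identity
$$\frac{a_{h}}{b_{h}}\ =\ \prod_{i=0}^{k-1}\frac{h-i+r-1}{h-i}\ =\ \prod_{i=0}^{k-1}\Bigl(1+\frac{r-1}{h-i}\Bigr)$$
makes the analysis transparent: for $k\geq 2$, expanding the product produces at least one strictly positive cross-term, so $a_{h}/b_{h}>1+\sum_{i=0}^{k-1}(r-1)/(h-i)\geq 1+(r-1)k/h$; multiplying through by $h$ gives $h\, a_{h}>b_{h}(h+(r-1)k)$, i.e.\ $P>0$, as needed. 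For $k=1$ the same product reduces to the single factor $(h+r-1)/h$, so $P(h,1,r)\equiv 0$ and (\ref{f-pos}) becomes the identity $0\geq 0$ for every $h>0$; this immediately yields that $\cO_X(1)$ is always $f$-positive. The residual statement for $h\geq 2$, $k=1$ is then handled by specialising to the projective-bundle structure $X=\pr_{B}(\cE/\cM)$ and applying the convention of \cite{BS3} for the boundary equality case of (\ref{f-pos}).
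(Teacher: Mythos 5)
Your reduction is exactly the paper's Lemma~\ref{lemma}: the four ingredients you list ($\cO_X(h)^{r-1}=h^{r-1}(kd-y)$, $\cO_F(h)^{r-2}=kh^{r-2}$, $h^0(F,\cO_F(h))=a_h-b_h$, and $\deg f_*\cO_X(h)$ via the sequence~(\ref{sequenza})) are the ones computed there, and your factorised criterion $\bigl[h\,a_h-b_h\bigl(h+(r-1)k\bigr)\bigr](k\mu-y)\geq 0$ is precisely inequality~(\ref{corretta}), whose second factor the paper writes as $(dk-ry)/r$. I checked the bookkeeping: the coefficients of $d$ and $y$ do collapse as you predict, so this part is correct. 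Where you genuinely diverge is the combinatorial positivity of the bracket. The paper expands the factorials and proves the product inequality~(\ref{ole}) by induction on $k$; you instead write $a_h/b_h=\prod_{i=0}^{k-1}\bigl(1+\tfrac{r-1}{h-i}\bigr)$ for $h\geq k$ and use the strictly positive cross-terms (available exactly when $k\geq 2$) together with $h-i\leq h$ to get $a_h/b_h>1+(r-1)k/h$, hence $P(h,k,r)>0$; the case $h<k$ is trivial since $b_h=0$. This argument is valid (it only needs $r\geq 2$) and is shorter and more transparent than the induction, and it yields the equivalence of (1)--(3) for $k\geq 2$ exactly as in the paper.

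The weak point is the residual clause for $k=1$, $h\geq 2$. Your computation correctly gives $P(h,1,r)=0$ for \emph{every} $h$, so criterion~(\ref{corretta}) degenerates to $0\geq 0$ and, taken at face value, says that $\cO_X(h)$ is $f$-positive for all $h\geq 1$ when $k=1$, independently of $y$ (indeed, when $\cM\subset\cE$ is a subbundle, $X=\pr_B(\cE/\cM)$ and (\ref{f-pos}) is an identity for every $h$). Your appeal to ``the convention of \cite{BS3} for the boundary equality case'' is not an argument and cannot produce the asserted ``if and only if $y\leq \mu$''; as written, this clause is simply not proved by your proposal. Be aware, though, that the paper's own proof does no better: it records $\alpha(h,1)=0$ and never returns to the $k=1$, $h\geq 2$ claim, which is in tension with Lemma~\ref{lemma} itself. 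So for the main equivalence your proof is complete and takes a cleaner route through the combinatorial lemma; for the $k=1$, $h\geq 2$ statement you should either supply a genuine separate argument or flag it as not following from the factorised criterion.
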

Before proving the theorem we  reduce the $f$-positivity to the following numerical inequality. Note that  here we use the standard convention that considers equal to zero a binomial of the form $\binom{n}{m}$ when $n<m$.
\begin{lemma}\label{lemma}
With the above notations, $f$-positivity of $\cO_X(h)$ for $h\geq 1$ is equivalent to the following inequality:
\begin{equation}\label{corretta}
\left[ h\binom{h+r-1}{r-1} - h\binom{h-k+r-1}{r-1}- \binom{h-k+r-1}{r-1}k(r-1)\right] \frac{dk-ry}{r}\geq 0.
\end{equation}
\end{lemma}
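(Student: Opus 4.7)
The plan is to compute each of the four quantities appearing in the $f$-positivity condition
$$\cO_X(h)^{r-1}\cdot h^0(F,\cO_F(h))\;\geq\;(r-1)\,\cO_F(h)^{r-2}\,\deg f_*\cO_X(h),$$
and then to perform an algebraic manipulation that identifies the resulting inequality with the one in the statement. Throughout, I would set $a:=\binom{h+r-1}{r-1}$ and $b:=\binom{h-k+r-1}{r-1}$, with the convention that $b=0$ whenever $h<k$.

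First I would handle the intersection theory on $\pr=\pb$. Writing $[X]\equiv kH-y\Sigma$ with $H=[\cO_\pr(1)]$ and $\Sigma$ a fibre class, and using that $B$ is a curve (so $c_i(\cE)=0$ for $i\geq 2$, whence $H^r=d$ and $H^{r-1}\cdot\Sigma=1$), I would immediately obtain $\cO_X(h)^{r-1}=h^{r-1}(kd-y)$. Restricting to a general fibre $F$, which is a degree $k$ hypersurface in $\pr^{r-1}$, gives $\cO_F(h)^{r-2}=kh^{r-2}$, and the restriction sequence $0\to\cO_{\pr^{r-1}}(h-k)\to\cO_{\pr^{r-1}}(h)\to\cO_F(h)\to 0$ together with the vanishing $H^1(\pr^{r-1},\cO(h-k))=0$ (valid since $r\geq 3$) yields $h^0(F,\cO_F(h))=a-b$.

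Next I would compute $\deg f_*\cO_X(h)$ via the exact sequence (\ref{sequenza}) of Remark \ref{vanishing}, whose right-exactness again uses the vanishing $R^1\pi_*\cO_\pr(h-k)=0$. The splitting-principle identity
$$\deg\sym^m\cE=\frac{m}{r}\binom{m+r-1}{r-1}d$$
(obtained by averaging the Chern roots of $\cE$) then yields, after a short manipulation, $\deg f_*\cO_X(h)=\tfrac{1}{r}\bigl[had-(h-k)bd-bry\bigr]$. The binomial convention rendering $b=0$ for $h<k$ is consistent with the fact that the leftmost term of (\ref{sequenza}) vanishes in that range, so no separate case analysis is needed.

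Finally, substituting the four ingredients into the $f$-positivity inequality and multiplying both sides by the positive factor $r/h^{r-2}$ reduces it to
$$rh(kd-y)(a-b)\;\geq\;(r-1)k\bigl[had-(h-k)bd-bry\bigr].$$
The step that I expect to be the only genuinely tricky point of the argument is to expand both sides as polynomials in $d$ and $y$ and verify that the difference collapses to $\bigl[ha-hb-(r-1)kb\bigr]\cdot(kd-ry)$: the combinatorial fact that makes this work is that the coefficients of $d$ and of $y$ in the difference turn out to be $k$ and $-r$ respectively times the common bracket $ha-hb-(r-1)kb$. Dividing by the positive constant $r$ then yields exactly inequality (\ref{corretta}). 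The main obstacle is simply to keep the bookkeeping straight; there is no conceptual subtlety.
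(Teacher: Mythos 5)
Your proposal is correct and follows essentially the same route as the paper: compute $\cO_X(h)^{r-1}=h^{r-1}(kd-y)$, $\cO_F(h)^{r-2}=kh^{r-2}$, $h^0(F,\cO_F(h))=\binom{h+r-1}{r-1}-\binom{h-k+r-1}{r-1}$ and $\deg f_*\cO_X(h)=\frac{1}{r}\bigl[had-(h-k)bd-bry\bigr]$ via the sequence (\ref{sequenza}), then clear denominators and check that the difference of the two sides factors as $\bigl[ha-hb-(r-1)kb\bigr](kd-ry)$, which I verified is exactly the collapse you claim. The only cosmetic deviation is that you obtain $h^0(F,\cO_F(h))$ directly from the restriction sequence on $\pr^{r-1}$ rather than as $\rk f_*\cO_X(h)$, which changes nothing of substance.
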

\begin{proof}
Let us compute the invariants involved in the  $f$-positivity of $\cO_{X}(h)$.
Let $H$ be the class of $\cO_\pr(1)$ and $H_X$ the class of $\cO_X(1)$. By the intersection theory we have:
$$
\begin{array}{l}
(hH_X)^{r-1}=h^{r-1}(H_{X})^{r-1}=h^{r-1}(H^{r-1}(kH-y\Sigma))=
 h^{r-1}(kd-y).$$
\\
 \\
 H_{F}^{r-2}=H^{r-3}\Sigma(kH-y\Sigma)=k.
\end{array}
$$
Moreover by using the sequence (\ref{sequenza}) given in Remark \ref{vanishing} we have that

$$
\begin{array}{ll}
\rk\, f_*\cO_X(h)=&\rk\, \pi_*\cO_\pr(h)- \rk\, \pi_*\cO_\pr(h-k)=\\
&\\
&= h^0(\pr^{r-1},\cO_{\pr^{r-1}}(h))- h^0(\pr^{r-1},\cO_{\pr^{r-1}}(h-k))=\binom{h+r-1}{r-1}- \binom{h-k+r-1}{r-1},\\
\end{array}
$$
and that
$$
\begin{array}{ll}
 \deg f_*\cO_{X}(h)&= \deg \pi_*\cO_{\pr}(h)- \deg \pi_* \cO_\pr(h-k)\otimes \cM=\\
& \\
& =\deg \sym^h\pi_*\cO_\pr(1) -\deg \sym^{h-k}\pi_*\cO_\pr(1)-y(\rk\,\sym^{h-k}\pi_*\cO_\pr(1))= \\
 &\\
 & =\binom{h+r-1}{r}d - \binom{h-k+r-1}{r}d-  \binom{h-k+r-1}{r-1}y.
 \\
\end{array}
$$
So, $f$-positivity of $\cO_X(h)$ is equivalent to the following inequality
$$
\begin{array}{l}
h\left(\binom{h+r-1}{r-1}-\binom{h-k+r-1}{r-1}\right)(kd-y)\geq \\
\geq \left(\binom{h+r-1}{r}d- \binom{h-k+r-1}{r}d-\binom{h-k+r-1}{r-1}y\right)k(r-1).\\
\end{array}
$$
Making the substitution
$$
\binom{j+r-1}{r}=\binom{j+r-1}{r-1}\frac{j}{r},
$$
for $j=h$ and $j=h-k$ in the above inequality, we get, after a simple computation, inequality (\ref{corretta}).
\end{proof}

\begin{proof}{\it of Theorem \ref{due}}
Let us give a name to the quantity in the square brackets in inequality (\ref{corretta}) above:
$$
\alpha(h,k):=h\binom{h+r-1}{r-1} - h\binom{h-k+r-1}{r-1}- \binom{h-k+r-1}{r-1}k(r-1).
$$
Note first that
$$
\alpha(h,1)= \binom{h+r-2}{r-1}\left(h+r-1 -h-(r-1)\right)=0,
$$
and that for $h <k$ this is just
$$\alpha (h, k) = h\binom{h+ r -1}{r-1} > 0.$$
Thus  we have to show that $\alpha(h,k)$ is positive for any $h\geq k\geq 2$.
By expanding the binomials, we get
\begin{equation*}
\begin{split}
\alpha(h,k)&= \frac{(h+r-1)!-(h-k+r-1)!\left[ (h-1)(h-2)\ldots(h-k+1)(h+(r-1)k)\right]}{(r-1)!(h-1)!}= \\
&=\frac{(h-k+r-1)!}{(r-1)!(h-1)!}\beta(h,k), \\
\end{split}
\end{equation*}
where
$$\beta(h,k):= (h+r-1)(h+r-2)\ldots (h+r-k)- (h-1)\ldots (h-k+1)[h+(r-1)k].$$
Then the proof is concluded if we show the following
\begin{lemma}
For any triple of integers such that $h\geq k\geq 2$, and $r\geq 2$, the following inequality holds:
\begin{equation}\label{ole}
\prod_{i=1}^{k}(h+r-i)\geq \prod_{j=1}^{k}(h-j+1)+ k(r-1) \prod_{l=1}^{k-1}(h-l).
\end{equation}
\end{lemma}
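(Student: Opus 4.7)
The plan is to prove the inequality (\ref{ole}) by induction on $r\geq 1$, with $h\geq k\geq 2$ fixed. Writing $A(r):=\prod_{i=1}^{k}(h+r-i)$ for the left hand side and $B(r):=\prod_{j=1}^{k}(h-j+1)+k(r-1)\prod_{l=1}^{k-1}(h-l)$ for the right hand side, one checks immediately that the base case $r=1$ is an equality: the product $\prod_{i=1}^k(h+1-i)$ equals $h(h-1)\cdots(h-k+1)=\prod_{j=1}^k(h-j+1)$, and the second summand in $B(1)$ vanishes. This agrees with the equality $\alpha(h,1)=0$ already observed in the proof of Theorem \ref{due}.

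For the inductive step the idea is to compare the forward differences $A(r+1)-A(r)$ and $B(r+1)-B(r)$. A short factorisation gives
$$A(r+1)-A(r)=\prod_{i=1}^{k-1}(h+r-i)\bigl[(h+r)-(h+r-k)\bigr]=k\prod_{i=1}^{k-1}(h+r-i),$$
while $B$ is affine in $r$ with slope $B(r+1)-B(r)=k\prod_{l=1}^{k-1}(h-l)$. Thus the induction reduces to the term-by-term inequality
$$\prod_{i=1}^{k-1}(h+r-i)\;\geq\;\prod_{l=1}^{k-1}(h-l)\qquad (r\geq 1).$$
Under the hypothesis $h\geq k\geq 2$, every factor on the right satisfies $h-l\geq h-k+1\geq 1>0$, and the matching factor on the left satisfies $h+r-i\geq h-i+1>h-i$; the required inequality then follows by multiplying positive quantities factor by factor.

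I do not expect any real obstacle: the argument is essentially bookkeeping. The only point to be careful about is the positivity of all the factors that appear, so that the termwise inequality can be promoted to the inequality of products without reversal of sign. This positivity is automatic from the standing hypothesis $h\geq k\geq 2$, which also ensures that the binomial conventions used earlier in the proof of Theorem \ref{due} (namely $\binom{h-k+r-1}{r-1}\geq 0$ and similar) are consistent with the expanded form of $\alpha(h,k)$.
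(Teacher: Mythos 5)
Your argument is correct, but it takes a genuinely different route from the paper. You fix $h\geq k\geq 2$ and induct on $r$, observing that the right-hand side $B(r)$ is affine in $r$ with slope $k\prod_{l=1}^{k-1}(h-l)$, while the left-hand side satisfies $A(r+1)-A(r)=k\prod_{i=1}^{k-1}(h+r-i)$; since all factors are positive under $h\geq k$ and $h+r-i>h-i$ termwise, the increments of $A$ dominate those of $B$, and the base case $r=1$ is an exact equality (consistent with the identity $\alpha(h,1)=0$ noted before the lemma). The paper instead inducts on $k$: it checks $k=2$ by direct expansion, then multiplies the inductive inequality by the extra factor $(h+r-k-1)$ and regroups the resulting terms to absorb them into the $k+1$ case. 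Your version buys a cleaner reduction -- the whole inductive step collapses to a trivial termwise comparison of products -- and it makes transparent where equality sits ($r=1$); the paper's version keeps $r$ arbitrary throughout and is closer in spirit to how the quantity $\beta(h,k)$ arises from expanding the binomials, but requires slightly more bookkeeping. Both are complete and elementary; your only essential care point, the positivity of all factors so that termwise bounds multiply, is correctly handled via $h-l\geq h-k+1\geq 1$.
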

Let us prove the Lemma by induction on $k\geq 2$.
For $k=2$ and any $h\geq 2$, $r\geq 2$ we have
\begin{equation*}
\begin{split}
(h+r-1)(h+r-2)= h(h+r-2)+(r-1)(h+r-2)= \\
=h(h-1) +(r-1)(2h+r-2)\geq h(h-1)+2(r-1)(h-1),
\end{split}
\end{equation*}
Let the inequality  be true for fixed $k$ and any $h\geq k$ and $r\geq 2$.
Let us consider any $h\geq k+1$ and $r\geq 2$.
Let us multiply both terms of  (\ref{ole}) by $(h+r-k-1)$: we get
$$
\prod_{i=1}^{k+1}(h+r-i)\geq (h+r-k-1)\prod_{j=1}^{k}(h-j+1)+ k(r-1) (h+r-k-1)\prod_{l=1}^{k-1}(h-l)=
$$
$$
= \prod_{j=1}^{k+1}(h-j+1)+ (r-1)\prod_{j=1}^{k}(h-j+1) +  k(r-1) \prod_{l=1}^{k}(h-l) + k(r-1)^2 \prod_{l=1}^{k-1}(h-l)\geq
$$
$$
\geq \prod_{j=1}^{k+1}(h-j+1)+ (r-1)\prod_{l=1}^{k}(h-l) +  k(r-1) \prod_{l=1}^{k}(h-l) + k(r-1)^2 \prod_{l=1}^{k-1}(h-l)\geq
$$
$$
\geq \prod_{j=1}^{k+1}(h-j+1)+ (k+1)(r-1)\prod_{l=1}^{k}(h-l).
$$
and the proof is concluded.
\end{proof}

\begin{remark}\label{twist}
Recall that $f$-positivity of any line bundle is equivalent to $f$-positivity of the same line bundle twisted by the pullback of any line bundle over the base (see e.g. \cite[Remark 1.1]{BS3}). Using this fact, we see that  Theorem \ref{uno} is implied by Theorem \ref{due}, because, as already observed, $\omega_f\cong \cO_{X}(k-r)\otimes \pi^*\cD$ where $\cD$ is a line bundle on $B$ of degree $y-d$.
\end{remark}

\begin{remark}
In general the $f$-positivity of a line bundle and the $f$-positivity of its  multiples are not at all equivalent: see \cite{BS3}, Section 1, in particular Remark 1.8.
\end{remark}

\begin{remark}
It is worth noticing that  if we consider the projective bundle $\pr$ itself, then any relatively ample line bundle $\cO_\pr(k)\otimes \pi^*\cM^{-1}$ (so any line bundle of this form with $k>0$) is $\pi$-positive.
Indeed it is immediate to check that inequality (\ref{f-pos}) is an equality with any of these line bundles.
In general it does not seem immediate to relate the $f$-positivity of a polarized variety with the $f$-positivity of an hyperplane section.
\end{remark}

\subsection{The cones of divisors of $\pr_B(\cE)$}\label{birational}
In order to understand the interest of Theorem \ref{CHqui}, let us briefly describe what is known about nef and big divisors of  $\pr=\pb$. See for reference \cite[Sec.3.b]{nakayama} and \cite{fulger}.
Let us consider the Harder-Narasimhan filtration of $\cE$
$$0=\cE_0\subset \cE_1\subset \ldots \subset \cE_l=\cE,$$
and call $\mu_i:= \mu(\cE_i/\cE_{i-1})$, and $\mu:=\mu$.
Recall in particular that
\begin{equation}\label{HN}
\mu_{\ell}< \mu_{\ell-1} <\ldots< \mu_1,
\end{equation}
and that $\mu_{\ell}<\mu<\mu_1$ unless $\cE$ is semistable, in which case $\cE_1= \cE_\ell= \cE$.

Let us consider the picture in the framework of numerical equivalence: although the results are more general, it seems enlightening to consider this setting.
The  N\'eron-Severi $\mathbb \R$-vector space $N^1_\R(\pr)$ of effective $\mathbb R$-divisors modulo numerical equivalence is $2$-dimensional, generated by the class of the tautological line bundle $H=[\cO_{\pr}(1)]$ and the class of a fibre $\Sigma$.
The sequence of slopes of Harder-Narashiman provide a decomposition of the space $N^1(\pr)$ in subcones as in figure (\ref{coni}).

\begin{figure}[!ht]\label{coni}
\begin{center}
\includegraphics[height=12.25cm]{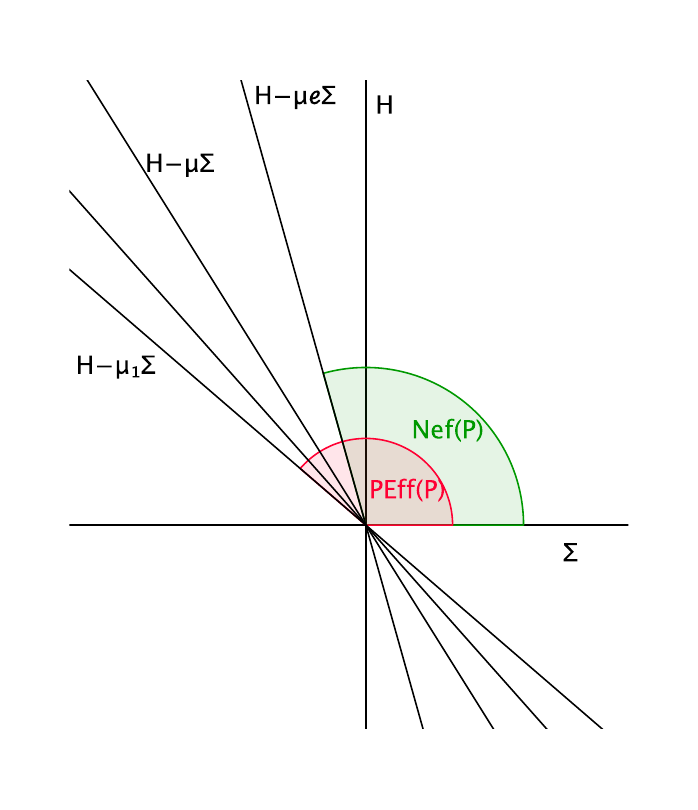}
\end{center}
\caption{Cones of divisors of $\pr$}
\end{figure}

The following classical results give characterizations of  the biggest and smallest subcones:
\begin{itemize}
\item (Miyaoka \cite{miyaoka}) ${\rm Nef}(\pr)=\R^+[ H-\mu_{\ell}\Sigma]\oplus \R^+ [\Sigma]$;
\item (Nakayama \cite{nakayama}) $\overline{{\rm Eff}(\pr)}=\R^+[ H-\mu_{1}\Sigma]\oplus \R^+[\Sigma]$.
\end{itemize}
It should also be mentioned that Wolfe \cite{wolfe} and Chen \cite{chen} proved that the volume function is a polynomial of degree $r$ when it is restricted to the subcones $\R^+ [H-\mu_i\Sigma]\oplus \R^+[H-\mu_{i+1} \Sigma]$, for $i=0,\ldots, \ell-1$, thus having a behaviour similar to the case of surfaces \cite{BKS}.
In case $\cE$ is $\mu$-semistable, all the above cones coincide, while in the  $\mu$-unstable case, this richer birational geometry appears, and can be studied.

\begin{remark}
The results in the previous section thus regard the divisors whose numerical classes belong to the ``intermediate'' cone spanned by $H-\mu_1\Sigma$ and $H-\mu\Sigma$ in this picture. In particular,
Theorem \ref{due} tells us that its complementary cone $\R^+[ H-\mu\Sigma]\oplus \R^+[\Sigma]$  is precisely the cone of numerical classes of  hypersurfaces $X$  which satisfy the $f$-positivity with $\cO_X(h)$ for any $h$.
It is worth making a couple  of remarks.
\begin{itemize}
\item The class $r(H-\mu \Sigma)=rH-d\Sigma$ is the anti-canonical one (Remark \ref{relative-canonical}). It is nef and pseudoeffective, but not big if $\cE$ is $\mu$-semistable, while it is big and not nef if $\cE$ is $\mu$-unstable.
\item Consider the positive cone $P(\pb)$ to be the half-cone of $\{D\in N^1(\pb)\mid D^r\geq 0\}$ that contains the ample cone. This is precisely $\R^+[ H-\mu\Sigma]\oplus \R^+[\Sigma]$.
Indeed, given $a, b\in \mathbb R^{>0}$, we have that $(aH-b\Sigma)^{r}=a^rH^r-ra^{r-1}bH^{r-1}\Sigma=a^rd-ra^{r-1}b\geq 0$ if and only if $b/a\leq d/r=\mu$.
\end{itemize}
\end{remark}

In the rest of the paper we will investigate --using the results of Section \ref{sec-inequalities} and also other methods-- the geometry of the divisors in the linear systems contained in the subcone $$\R^+ [H-\mu_1 \Sigma]\oplus \R^+[H-\mu\Sigma]$$  in case $\cE$ is not $\mu$-semistable. Note that, by Miyaoka's and Nakayama's Theorems above,  the divisors in the interior of this cone are all big and not nef.


\section{Instability results and applications to geometry}
\subsection{An instability result}
Let us start by recalling the following result, which we state in a version the most convenient for our purposes. It has been originally proved by Cornalba and Harris in \cite{CH}, and extended by Bost \cite{bost}- see also \cite{BS3}.

Let $Y$ be a subvariety of dimension $s$ and of degree $k$ in $ \pr^n$. Consider the set $Z(Y)$ of all the $(n-s-1)$- dimensional projective subspaces $L$ in $\pr^n$ that intersects $Y$.
This is an hypersurface of degree $k$ in the Grassmannian $G:=Gr(n - s, n + 1)$, called the {\em Chow variety}.
It is defined by the vanishing of some polynomial of degree $k$ in the Grassmann coordinate ring, which is unique up to a constant factor. This element is called the Chow form of $Y$. The  variety $Y$ is called Chow semi-stable (resp. Chow stable) if its Chow form is semi-stable (resp. stable) for the natural $SL(n + 1)$-action.

\begin{theorem}[Cornalba-Harris, Bost]\label{teoCH}
Let $Y$ be an $n$-dimensional variety with a flat proper surjective morphism $g\colon Y\longrightarrow B$ over a smooth curve $B$.
Let $\cL$ be a line bundle over $X$ which is relatively ample with respect to $g$.
If the general fibre of $g$ is Chow semistable with respect to the immersion induced by the fibre of $\cL$ then $\cL$ is $f$-positive.
\end{theorem}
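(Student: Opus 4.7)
The plan is to use Geometric Invariant Theory applied to the Chow form of the family, in the spirit of the original arguments of Cornalba--Harris \cite{CH} and Bost \cite{bost}. The guiding principle is that Chow semistability of the general fibre, when put into a one-parameter family over $B$, produces a non-vanishing section of an explicit line bundle on $B$, whose non-negativity of degree translates exactly into the $f$-positivity inequality (\ref{f-pos}).

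First I would set up the relative embedding. After a standard reduction (replacing $\cL$ by a sufficiently high relative power if needed, and twisting by the pullback of a line bundle from $B$, which preserves $f$-positivity by Remark \ref{twist}) one may assume that $\cL$ is relatively very ample and that $g_*\cL$ is locally free. This yields a closed immersion $Y\hookrightarrow \pr_B(g_*\cL)$, and fibrewise an embedding of the general fibre $F$ into $\pr^N$ as a subvariety of dimension $n-1$ and degree $d_F=\cL_{|F}^{n-1}$, with $N+1=h^0(F,\cL_{|F})$.

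Next I would construct the Chow form in families. The Chow form of a single fibre is a section of $\cO(d_F)$ on the Grassmannian parametrising linear subspaces of $\pr^N$ of complementary dimension to $F$, and it is $SL(N+1)$-semi-invariant precisely when $F$ is Chow semistable. Varying in the family, these forms glue to a section of a line bundle on the relative Grassmannian $G_B\to B$ attached to $g_*\cL$. A Schur-functor and projection-formula computation then identifies this line bundle with the tautological $\cO(d_F)$ twisted by the pullback of a line bundle $\cA$ on $B$, whose degree can be expressed explicitly in terms of $\cL^n$ and $\deg g_*\cL$.

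Finally I would extract the inequality via GIT. Chow semistability of the generic fibre means that a sufficiently high tensor power of the Chow form lies in the $SL(N+1)$-invariant part of the ambient representation, so in families it produces a non-zero section of a line bundle of the form $\cA^{\otimes a}\otimes (\det g_*\cL)^{-b}$ on $B$, for explicit positive integers $a,b$ dictated by the representation theory. Non-negativity of its degree, after clearing denominators, becomes exactly (\ref{f-pos}). The main obstacle is the representation-theoretic bookkeeping: one has to identify the precise dominant weight of the irreducible $SL(N+1)$-module to which the Chow form belongs, because it is this weight that produces the two factors $n$ and $h^0(F,\cL_{|F})$ appearing in the $f$-positivity inequality. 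The detailed weight calculation would be carried out along the lines of \cite{CH} and \cite{bost}, following the streamlined exposition in \cite{BS3}.
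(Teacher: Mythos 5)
First, a point of comparison: the paper does not prove Theorem \ref{teoCH} at all --- it is recalled from Cornalba--Harris \cite{CH} and Bost \cite{bost} (see also \cite{BS3}), so there is no internal proof to measure your argument against. Your outline is essentially the standard proof of the cited result in its Chow-form (Bost-style) incarnation: embed the family in $\pr_B(g_*\cL)$, form the relative Chow divisor inside the relative Grassmannian, whose class is $d_F$ times the tautological class plus the pullback of a line bundle $\cA$ on $B$ whose degree is governed by $\cL^n$ (a Deligne-pairing type computation), and then use semistability of the generic Chow point to produce a nonzero section on $B$ of a line bundle of the form $\cA^{\otimes a}\otimes(\det g_*\cL)^{-b}$; the weight of the centre of $GL(N+1)$ on the Chow form of an $(n-1)$-dimensional, degree-$d_F$ cycle, namely $n\,d_F$, is indeed exactly what produces the factors $n$ and $h^0(F,\cL_{|F})$ in (\ref{f-pos}), as you point out.

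Two cautions, one of which would be a genuine gap if actually used. The proposed reduction ``replacing $\cL$ by a sufficiently high relative power'' is not harmless: it changes both the hypothesis (Chow semistability of $F$ is assumed with respect to the embedding by $\cL_{|F}$ itself, not by a power) and the conclusion, since $f$-positivity of a line bundle and of its multiples are not equivalent (see the remark following Remark \ref{twist}, and \cite[Rem.~1.8]{BS3}). Fortunately it is also unnecessary: the hypothesis already gives the embedding of the general fibre by the complete system $|\cL_{|F}|$, $g_*\cL$ is automatically locally free over the smooth curve $B$, and the finitely many fibres over which $\cL$ fails to embed are handled by constructing the relative Chow section from data defined over all of $B$ (or, as in \cite{CH}, by the sheaf-theoretic Hilbert-point formulation), semistability being invoked only at the general point of $B$ to guarantee that the resulting section is nonzero, whence the degree inequality. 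Second, ``a sufficiently high tensor power of the Chow form lies in the $SL(N+1)$-invariant part'' is not what semistability says; semistability provides a non-constant $SL(N+1)$-invariant homogeneous polynomial not vanishing at the Chow point, and it is by evaluating such an invariant (of some degree $a$, whose central weight forces the twist by $(\det g_*\cL)^{-b}$) on the relative Chow section that one obtains the nonzero section on $B$ whose non-negative degree, after clearing denominators, is precisely (\ref{f-pos}). With these two adjustments your sketch is the argument of \cite{CH} and \cite{bost} as summarized in \cite{BS3}.
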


Applying Theorem \ref{due} and this result, we can immediately deduce the following instability result.
\begin{theorem}\label{corCH}
Let $\cE$ be a $\mu$-unstable locally free sheaf over $B$. Let  X $\in|\cO_{\pr}(k)\otimes \pi^*\cM^{-1}|$ be any relative hypersurface free from vertical components with $y/k>\mu$. Then any fiber $F$ of $f\colon X\longrightarrow B$ is Chow unstable with respect to $\cO_F(h)$ for  any $ h\geq 1$.
\end{theorem}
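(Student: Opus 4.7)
The approach is a direct contradiction argument that reads Theorem \ref{teoCH} backwards, using Theorem \ref{due} as the obstruction. The role of the $\mu$-unstable hypothesis on $\cE$ is mainly to make the statement non-vacuous: if $\cE$ were $\mu$-semistable, then Nakayama's description of the pseudoeffective cone recalled in Section \ref{birational} would force $y/k\leq \mu_1=\mu$ for any effective divisor of type $\cO_\pr(k)\otimes\pi^*\cM^{-1}$, contradicting the hypothesis $y/k>\mu$ outright.

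First I would observe that the hypothesis that $X$ contains no vertical components is exactly what is needed to make $f\colon X\to B$ flat: on the smooth curve $B$, flatness amounts to the requirement that every associated point of $X$ dominate $B$. Moreover $\cO_X(h)=j^*\cO_\pr(h)$ is $f$-relatively ample for every $h\geq 1$, so the hypotheses of Theorem \ref{teoCH} are in place.

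Now suppose, for contradiction, that some fibre $F_0$ of $f$ is Chow semistable with respect to $\cO_{F_0}(h)$ for some $h\geq 1$. Chow semistability is an open condition in a flat family of polarized varieties: the Chow form varies in an algebraic family over $B$, and the unstable locus is cut out by the vanishing of the appropriate $SL$-invariants applied to this relative Chow form. Hence the general fibre of $f$ is also Chow semistable with respect to $\cO_F(h)$. Theorem \ref{teoCH} then yields that $\cO_X(h)$ is $f$-positive, and Theorem \ref{due} translates this into $y/k\leq \mu$, contradicting the standing hypothesis. Therefore no fibre of $f$ is Chow semistable with respect to $\cO_F(h)$, and the same conclusion holds for every $h\geq 1$.

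The only delicate point is the openness of Chow semistability in families, but this is standard GIT and requires no genuine work here. All the substantive content has already been supplied by the proofs of Theorems \ref{teoCH} and \ref{due}.
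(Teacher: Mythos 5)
Your argument is essentially the paper's own: Theorem \ref{corCH} is deduced there by reading Theorem \ref{teoCH} backwards through Theorem \ref{due}, and your flatness observation and the openness of Chow semistability in the family (to pass from one semistable fibre to the general fibre, which is what Theorem \ref{teoCH} actually requires) are exactly the details the paper leaves implicit. The only caveat---shared with the paper, since Theorem \ref{due} gives the equivalence only for $k\geq 2$ and says $\cO_X(1)$ is always $f$-positive when $k=1$---is that the case $k=1$, $h=1$ is not covered by this contradiction argument.
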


\begin{remark}Theorem \ref{corCH} above can be extended to any relatively ample effective divisor with $y/k>\mu$. Indeed, suppose $X$ has some vertical component, so $X=X'+\pi^*D$ where $D$ is an effective divisor on $B$. Let $D$ be maximal, so that $X'$ has no vertical components. The divisors $X'$ is in the system $|\cO(k)\otimes \pi^*M^{-1}\otimes \pi^*\cD^{-1}|$, so we can apply Theorem \ref{corCH} to $X'$ because $(y+e)/k>y/k$. Of course the general fibres of $X$ and of $X'$ coincide. \end{remark}

\begin{remark}
It is natural to wonder wether or not the instability of the couple $(F, \cO_F(h))$ implies  instability of the whole $(X, \cO_X(h))$.
This facts seem to be in general not related.
In our situation we will be able to derive asymptotical instability of $(X, \cO_X(h))$, but not directly from the instability of $(F, \cO_F(h))$. We will prove that the fibres need to be highly singular, and from this we get a condition on the  singularities of $X$.
Now from this follows the instability of $(X, \cO_X(h))$: see Theorem \ref{slc}.
\end{remark}

\subsection{A condition on the singularities}

Let us turn our attention on the possible singularities of the relative hypersurfaces in $\pr$.
First we make  a couple of easy remarks. As usual $X\in |\cO_{\pr}(k)\otimes \pi^*\cM^{-1}|$, with $\cM$ line bundle of degree $y$ over $B$.
\begin{itemize}
\item if $y/k<\mu_\ell$, then the numerical class of $\cO_{\pr}(k)\otimes \pi^*\cM^{-1}$ is in the interior of the nef cone, hence this line bundle is ample, and a general member $X\in |\cO_{\pr}(mk)\otimes \pi^*\cM^{-m}|$, for $m\gg0$, will be smooth.
\item If $k>r$ and $y/k>\mu$ and $X\in |\cO_{\pr}(k)\otimes \pi^*\cM^{-1}|$ (so in particular $\cE$ is $\mu$-unstable), by Theorem \ref{uno} we have that $\omega_f$ is relatively ample and  that the relative canonical invariants are negative. When the general fibre is of general type, both $\omega_f$ and $f_*\omega_f$ are nef if $X$ is normal with canonical singularities (see Theorem 1.4 in \cite{ohno} and \cite{fujita}). So we can deduce that in this case $X$ must be singular.
\end{itemize}

We now see that, using the instability result on the fibres of $f$, we can be much more precise as to the singularities that the hypersurface $X$ necessarily carries when we are in the subcone $\R^+[H-\mu_{1}\Sigma]\oplus \R^+[H-\mu \Sigma]$.
First of all we observe that $X$ need to have a singular locus transverse to $f$:

\begin{proposition}\label{prop-sing}
Let $\cE$ be an unstable bundle.
Let $X$ be a relative hypersurface $X\in |\cO_{\pr}(k)\otimes \pi^*\cM^{-1}|$.

If $y/k>\mu$ then any fibre of $f$ is singular.
\end{proposition}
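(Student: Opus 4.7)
The plan is to combine the instability Theorem \ref{corCH} with Mumford's classical GIT result on the Chow semistability of smooth projective hypersurfaces.

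First I would reduce to the case in which $X$ has no vertical components. Write $X=X'+\pi^*D$ with $D$ effective on $B$ of degree $e\geq 0$ and $X'$ free of vertical components. Then $X'\in |\cO_\pr(k)\otimes\pi^*(\cM\otimes\cO_B(D))^{-1}|$ has invariant $(y+e)/k\geq y/k>\mu$, and the scheme-theoretic fibres of $f$ and of $f|_{X'}$ agree off $\mathrm{Supp}\,D$, so it suffices to treat $X$ without vertical components. In that case, for every $b\in B$, the fibre $F=X\cap\pi^{-1}(b)$ is an effective divisor of class $\cO_{\pr^{r-1}}(k)$ in $\pi^{-1}(b)\cong\pr^{r-1}$---that is, an honest degree-$k$ hypersurface.

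Next, I would apply Theorem \ref{corCH} with $h=1$: under the hypotheses $\cE$ unstable and $y/k>\mu$, every such fibre $F$ is Chow unstable as a degree-$k$ hypersurface in $\pr^{r-1}$ with respect to its natural embedding by $\cO_F(1)$. To conclude, I would invoke Mumford's classical theorem (from his GIT book) that every smooth hypersurface of degree $d\geq 2$ in $\pr^N$ with $N\geq 2$ is Chow semistable under the $SL(N+1)$-action on degree-$d$ forms. Since Assumption \ref{ass} gives $r\geq 3$ and hence $N=r-1\geq 2$, the contrapositive of Mumford's theorem (for $k\geq 2$) forces every fibre $F$ to be singular.

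The main obstacle will be twofold: first, to give a precise reference to Mumford's Chow semistability statement at the required generality (noting that for hypersurfaces the Chow form coincides with the defining polynomial, so Chow (semi)stability reduces to classical GIT (semi)stability of forms under $SL(N+1)$); second, to treat the edge case $k=1$, where the naive argument breaks down because relative hyperplanes have smooth fibres $\pr^{r-2}\subset\pr^{r-1}$. In that case one would have to use Theorem \ref{corCH} with $h\geq 2$---recalling from Theorem \ref{due} that $\cO_X(h)$ fails to be $f$-positive for $h\geq 2$ as soon as $y>\mu$---and appeal to Chow stability of Veronese embeddings to derive a contradiction. This is the most delicate point of the plan, and it may well require the assumption $k\geq 2$ to be made explicit in the statement.
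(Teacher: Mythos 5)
Your proposal follows essentially the same route as the paper: the paper's proof is exactly the application of Theorem \ref{corCH} with $h=1$ (the reduction to divisors without vertical components being handled in the remark following that theorem) combined with the classical fact that a smooth hypersurface in $\pr^{r-1}$ is Chow semistable with respect to $\cO(1)$, citing \cite[Chap IV, sec. 2]{FM}. Your worry about $k=1$ is in fact well taken --- the paper asserts semistability for ``any degree'', which fails for linear forms, so the edge case you flag is a genuine subtlety that the paper's own proof passes over rather than a defect of your argument.
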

\begin{proof}
By Theorem \ref{corCH} any fibre $F$ of $f$ is Chow unstable with respect to $\cO_F(1)$. But a smooth hypersurface of any degree in $\pr^{r-1}$ is Chow semistable with this sheaf: see for instance \cite[Chap IV, sec. 2]{FM}.
\end{proof}
We now use Theorem \ref{corCH} combined with a result of Lee \cite{Lee} relating the Chow stability of a variety with the log canonical threshold ($lct$) of its Chow variety.

Recall the definition of log canonical threshold (see \cite{KM} for reference).
Let $(Y, \Delta) $ be a pair, with $Y$ normal $\Q$-Gorenstein variety and $\Delta$  a $\Q$-Cartier, $\Q$-divisor on $Y$.
Given any birational morphism $\varphi\colon T\longrightarrow Y$ with $T$ normal, we have
$$
K_T + \varphi^{-1}_*\Delta \equiv \varphi^*(K_Y+\Delta) + \sum a( E_i, Y, \Delta) E_i,
$$
where the $ \varphi^{-1}_*\Delta $ is the strict transform of $\Delta$ and the $E_i$'s are the exceptional irreducible divisors associated to $\varphi$.
Then we define the {\em discrepancy} of the couple  $discrep(Y, \Delta)$ to be the infimum of the $a(E, Y, \Delta)$, taken for any birational morphism $\varphi$ and any exceptional irreducible divisor.
The couple $(Y, \Delta)$ is said to be {\em log canonical} (l.c.) if $discrep(Y, \Delta)\geq -1$.
The log canonical threshold is
$$
lct (Y, \Delta):= \sup \{t>0 | (Y, t\Delta) \mbox{ is log canonical}\}.
$$
This defines a rational number which lies in the interval $(0, 1]$.
The log canonical threshold  is a measure of the singularities of the couple $(Y, \Delta)$: for instance if $Y$ is smooth and  $\Delta$ is reduced and normal crossing, then $lct(Y,\Delta)=1$.

In \cite{Lee}, Lee proved a beautiful condition for a variety to be Chow semistable in terms of the log canonical threshold of its Chow form.
\begin{theorem}[Lee]\label{thmLEE}
Let $Y$ be an $s$-dimensional variety together with a non-degenerate degree $k$ immersion in $\pr^n$.
Let $Z(Y)\subset G$ be the corresponding Chow variety in the Grassmanian $G:= \mbox{Gr}(n-s-1, \pr^n)$.
Suppose that  the following inequality holds
\begin{equation}\label{cond-lct}
lct(G, Z(Y)) \geq \frac{n+1}{k} \quad \mbox{(resp. }> ).
\end{equation}
Then $Y\subset \pr^n$ is Chow semistable (resp.  Chow stable).
\end{theorem}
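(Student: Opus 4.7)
The plan is to prove Theorem \ref{thmLEE} by contraposition: assuming $Y \subset \pr^n$ is Chow unstable (resp.\ not Chow stable), I will exhibit a witness forcing $lct(G, Z(Y)) < (n+1)/k$ (resp.\ $\leq$).

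First, apply the Hilbert--Mumford numerical criterion to the $SL(n+1)$-action on the space of Chow forms of degree $k$. Chow instability of $Y$ produces a one-parameter subgroup $\lambda\colon \mathbb{G}_m \to SL(n+1)$ whose Mumford weight on the Chow point of $Y$ is strictly positive. After an appropriate choice of basis we diagonalize $\lambda(t) = \mbox{diag}(t^{a_0}, \ldots, t^{a_n})$ with $\sum_i a_i = 0$, and shift by a scaling so that all $a_i \geq 0$, which does not affect the $SL$-weight.

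Second, translate this GIT datum into birational geometry on $G = \mbox{Gr}(n-s, n+1)$. The flag determined by the eigenspaces of $\lambda$ on $\mathbb{C}^{n+1}$ produces a chain of $\lambda$-invariant Schubert subvarieties $W_\bullet$ of $G$. Following Mumford's combinatorial interpretation of the Chow weight as an intersection-theoretic pairing against the fixed flag, positivity of the Mumford weight forces the Chow hypersurface $Z(Y)$ to vanish to a computable order $m$ along one of these Schubert varieties $W$; the number $m$ admits an explicit expression in terms of the $a_i$ and the degree $k$.

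The core estimate is then a discrepancy computation. Let $\varphi\colon \widetilde{G} \to G$ be the blow-up of $G$ along $W$ (or a suitably weighted version), with exceptional divisor $E$. Then $a(E, G) = \mbox{codim}_G(W) - 1$, while $\mbox{ord}_E(Z(Y)) \geq m$, so by the definition of the log canonical threshold
$$
lct(G, Z(Y)) \;\leq\; \frac{a(E, G) + 1}{\mbox{ord}_E(Z(Y))} \;=\; \frac{\mbox{codim}_G(W)}{m}.
$$
The identity $\sum a_i = 0$, together with the explicit formula for $m$ in terms of the weights, collapses this ratio to a quantity strictly less than $(n+1)/k$, giving the required contradiction. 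The boundary case (Chow strictly semistable but not stable) corresponds to a $\lambda$ of weight zero and yields the non-strict bound, matching the ``resp.'' part of the statement.

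I expect the main obstacle to lie in the second step: converting the Hilbert--Mumford weight of the Chow form into a sharp multiplicity of $Z(Y)$ along an explicit $\lambda$-invariant Schubert subvariety of $G$, tightly enough to land on the precise constant $(n+1)/k$ and not some weaker ratio. This hinges on Mumford's delicate toric/combinatorial computation of Chow weights and on the fine structure of the Cayley--Chow form, which is where the specific geometry of the Grassmannian (as opposed to a generic ambient variety) enters essentially; the rest of the argument is then formal.
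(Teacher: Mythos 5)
Note first that the paper contains no proof of this statement: it is Lee's theorem, quoted from \cite{Lee} and used as a black box, so the only meaningful comparison is with Lee's own argument. Your overall strategy -- contraposition, the Hilbert--Mumford criterion, conversion of the destabilizing one-parameter subgroup $\lambda$ into a valuation on $G$ along which $Z(Y)$ vanishes to high order, and then the elementary bound $lct(G,Z(Y))\leq \frac{a(E,G)+1}{\mathrm{ord}_E(Z(Y))}$ -- has the right shape and is close in spirit to Lee's proof, which compares Mumford's weight $e_\lambda$ of the Chow form with a weighted multiplicity attached to the flag of $\lambda$.

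The problem is that the entire quantitative content of the theorem is concentrated in your second step, and you leave it as an expectation rather than prove it. You never exhibit the formula for $m$, never specify which Schubert variety $W$ (or which weighted blow-up adapted to the flag) to use, and never verify the decisive inequality $\mathrm{codim}_G(W)/m<(n+1)/k$; you only assert that $\sum_i a_i=0$ ``collapses'' the ratio. This is exactly where the proof lives: positivity of the Mumford weight is an inequality involving all the weights $a_0,\dots,a_n$ simultaneously, and in general no single Schubert stratum carries enough multiplicity of $Z(Y)$ -- one must use the monomial (weighted) valuation determined by the whole flag, compute its log discrepancy on the Grassmannian, and compare it with the weighted order of the Chow form, which is where Mumford's computation of $e_\lambda$ enters and where the precise constant $(n+1)/k$ is produced. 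In addition, the claim that shifting the $a_i$ to make them nonnegative ``does not affect the $SL$-weight'' is not correct as stated: the shift changes the weight of every degree-$k$ monomial in the Pl\"ucker coordinates by a fixed amount, so the Hilbert--Mumford inequality must be renormalized accordingly; this is harmless if done carefully, but it is symptomatic of the missing bookkeeping. As written, the proposal is a plausible plan whose crucial step -- the sharp translation of the GIT weight into a multiplicity/lct estimate -- is absent, so it does not yet constitute a proof.
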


From this it is immediate to derive the following result, which is a strengthening of Proposition \ref{prop-sing} for the cases $k\geq r$.

\begin{theorem}\label{TEOlct}
Let $\cE$ be an unstable bundle.
Let $X$ be a relative hypersurface $X\in |\cO_{\pr}(k)\otimes \pi^*\cM^{-1}|$.

If $y/k>\mu$ then
\begin{equation}\label{inequality-lct}
lct(\Sigma, {X}_{|\Sigma}) < \frac{r}{k},
\end{equation}
for any fibre $\Sigma$ of $\pi$.
\end{theorem}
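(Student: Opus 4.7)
The plan is to derive the inequality by contraposition, combining the instability result of Theorem \ref{corCH} with Lee's log-canonical-threshold criterion (Theorem \ref{thmLEE}). The entire content lies in matching the two statements through the fact that the Chow form of a hypersurface in projective space is essentially its defining equation, so that the Chow variety coincides with the hypersurface itself.

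First I would fix a fibre $\Sigma$ of $\pi$, which is isomorphic to $\pr^{r-1}$, and note that $F := X_{|\Sigma}$ is a hypersurface of degree $k$ in $\Sigma \cong \pr^{r-1}$. By Theorem \ref{corCH}, under the assumption $y/k > \mu$ (and $\cE$ unstable), $F$ is Chow unstable with respect to $\cO_F(1)$, i.e. with respect to its natural embedding in $\pr^{r-1}$.

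Next I would set up Lee's theorem in this situation. With the notation of Theorem \ref{thmLEE}, $n = r-1$, $s = \dim F = r-2$, and the Grassmannian becomes $G = \mathrm{Gr}(n-s, n+1) = \mathrm{Gr}(1, r) \cong \pr^{r-1}$, parametrising points of $\pr^{r-1}$. The Chow variety $Z(F)$ is the locus of points meeting $F$, which is $F$ itself; its Chow form is (a power of) the defining polynomial. Hence
\[
lct(G, Z(F)) \;=\; lct(\pr^{r-1}, F) \;=\; lct(\Sigma, X_{|\Sigma}),
\]
and the threshold $(n+1)/k$ in Lee's inequality is exactly $r/k$.

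The conclusion follows by contraposition: if $lct(\Sigma, X_{|\Sigma}) \geq r/k$ held, Theorem \ref{thmLEE} would imply that $F$ is Chow semistable with respect to $\cO_F(1)$, contradicting the Chow instability given by Theorem \ref{corCH}. Hence strict inequality $lct(\Sigma, X_{|\Sigma}) < r/k$ must hold. There is no real obstacle here beyond verifying the dimension bookkeeping and the identification $Z(F) = F$ for a hypersurface; once those are in place, the statement is a one-line contrapositive application of the two earlier theorems.
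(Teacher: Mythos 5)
Your argument is essentially the paper's own proof: the paper likewise deduces the bound by applying Lee's criterion (Theorem~\ref{thmLEE}) in contrapositive form to each fibre, using the Chow instability supplied by Theorem~\ref{corCH}, and your identification of the Chow variety of a hypersurface with the hypersurface itself (so that $lct(G,Z(F))=lct(\Sigma,X_{|\Sigma})$ and $(n+1)/k=r/k$) just makes explicit what the paper leaves implicit. The only cosmetic difference is that the paper disposes of the range $k<r$ by noting that $lct\leq 1<r/k$ there, instead of invoking Lee's theorem uniformly for all $k$.
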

\begin{proof}
 For $ k \geq  r$ we just can use LeeÕs Theorem on any fibre. 
For $k<r$ we have that inequality (\ref{inequality-lct}) trivially holds because the log canonical threshold is always smaller or equal than one.
\end{proof}

\begin{remark}
There are (at least) two questions that naturally arise from the above result. First of all one would like to know what singularity result hold for the total space $X$. This question has an easy answer which we discuss in \ref{sing-total}.
Secondarily, observe that inequality (\ref{inequality-lct}) is meaningful for $k\geq r$, and of course it becomes strong for $k\gg r$.
Thus it is natural to try and read it in the framework of the asymptotical behavior of the linear systems $ |\cO_{\pr}(k)\otimes \pi^*\cM^{-1}|$. In \ref{asympt} we begin the discussion of this aspect, and we will pursue this analysis through the rest of the paper.
\end{remark}

\subsection{Log canonical threshold of $X$}\label{sing-total}
From the condition on the log canonical threshold of the fibres of $f$ we can derive a singularity result for $X$ itself, as follows.

\begin{theorem}\label{slc}
Let $\cE$ be an unstable bundle. Let $X$ be a relative hypersurface $X\in |\cO_{\pr}(k)\otimes \pi^*\cM^{-1}|$.

If $y/k>\mu$ then $lct(\pr, X)<r/k$. In particular, if $k\geq  r$, the couple $(\pr, X)$ is not log canonical.
\end{theorem}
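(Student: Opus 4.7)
The plan is to deduce the bound on $lct(\pr,X)$ from Theorem \ref{TEOlct} via a standard adjunction argument. First, one reduces to the case that $X$ is free from vertical components: if $X=X'+\pi^{\ast}D$ with $X'$ vertical-free and $D$ effective on $B$, then $lct(\pr,X)\leq lct(\pr,X')$ because $\pi^{\ast}D$ is effective, while the numerical hypothesis $y/k>\mu$ is preserved (indeed strengthened) for $X'$, since the twist parameter jumps to $y+\deg D$. So we may assume $X$ contains no fibre of $\pi$ as a component.

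Next, pick a general fibre $\Sigma\cong\pr^{r-1}$ of $\pi$; then $\Sigma$ is smooth and not contained in $\mathrm{Supp}(X)$, so by Theorem \ref{TEOlct} we have $lct(\Sigma,X_{|\Sigma})<r/k$. The key step is to establish the inequality
\[
lct(\pr,X)\;\leq\;lct(\Sigma,X_{|\Sigma}).
\]
To this end, let $c:=lct(\pr,X)$, so that $(\pr,cX)$ is log canonical. For $\Sigma$ sufficiently general with respect to a fixed log resolution of $(\pr,X)$, the pair $(\pr,\Sigma+cX)$ is still log canonical: the smooth divisor $\Sigma$ contributes discrepancy $-1$ only along itself and does not alter the discrepancies of the exceptional divisors over $\pr$, whose centers have codimension at least two. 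Standard adjunction for log canonical pairs, applied to the smooth Cartier divisor $\Sigma$ on the smooth ambient $\pr$, then yields that $(\Sigma,cX_{|\Sigma})$ is log canonical, whence $c\leq lct(\Sigma,X_{|\Sigma})$.

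Combining the two inequalities gives $lct(\pr,X)<r/k$, as required. The second assertion follows at once: the log canonical threshold always lies in $(0,1]$ and $(\pr,X)$ is log canonical iff $lct(\pr,X)\geq 1$, so when $k\geq r$ the bound $lct(\pr,X)<r/k\leq 1$ forces $(\pr,X)$ not to be log canonical. Since Theorem \ref{TEOlct} already does the heavy lifting via the Chow-instability/Lee package, the only genuinely technical point in the present argument is the adjunction step; this could equivalently be obtained from the lower semicontinuity of the log canonical threshold in families, which implies that the lct of the total space is bounded by that of a general fibre.
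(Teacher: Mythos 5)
Your proof is correct and follows essentially the same route as the paper: both reduce the statement to the bound $lct(\pr,X)\leq lct(\Sigma,X_{|\Sigma})$ for a general fibre $\Sigma$ and then invoke Theorem \ref{TEOlct}. The only difference is cosmetic: the paper quotes Koll\'ar's Bertini-type result on discrepancies for general members of a free family (\cite[Prop.~7.7]{Kollar}), whereas you re-derive that comparison by adding $\Sigma$ to the boundary and applying the easy direction of adjunction (and your preliminary removal of vertical components is harmless but unnecessary, since a general fibre avoids them).
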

\begin{proof}
By a Bertini type result (see \cite[Prop.7.7]{Kollar}) we have that, as the fibres of $\pi$ move in a  free algebraic system, for a general fibre $\Sigma$ we have
$$
discrep( \Sigma, X_{|\Sigma})\geq discrep (\pr, X).
$$
Hence the set $\{t >0 | (\pr, tX) \mbox{ is l.c.}\}$ is contained in the set $\{t >0 | (\Sigma, t(X_{|\Sigma})) \mbox{ is l.c.}\}$, and thus we deduce that the log canonical threshold of the couple $(\pr, X)$ is smaller or equal to the one of $(\Sigma, X_{|\Sigma})$. If $y/k>\mu$, by Theorem \ref{TEOlct}, we have $lct(\Sigma, X_{|\Sigma})<r/k$, so we are done.
\end{proof}

\begin{remark}
Using also the much more subtle Inversion of Adjunction Theorem (see e.g. \cite[Sec.7]{Kollar}) we can say that there is an open set $\cU\subseteq B$ such that $lct (\pi^*(\cU), X\cap \pi^*(\cU))$ is precisely equal to  $lct(\Sigma, X_{|\Sigma})$.
\end{remark}

\begin{remark}
Recall that the numerical class of the anticanonical divisor of $\pr$ is $-K_\pr\equiv rH-(d+2b-2)\Sigma$, where $b$ is the genus of $B$ (Remark \ref{relative-canonical}).
Hence, in the case when $\cE$ is an unstable bundle over a curve of genus $b\geq 2$, we derive from the theorem above that any member of the anticanonical linear system has worse than canonical singularities.
Of course in this case $-K_\pr$ is big but not even nef, so $\pr$ is not even weak Fano.
Deep theorems due to Shokurov and Kawamata assert that for any Fano variety in dimension $\leq 4$ the general member of the anticanonical class has canonical singularities.
\end{remark}

\begin{remark}
From Theorem \ref{slc}, by using Odaka's result \cite{odaka}, we have that if $y/k>\mu$ and $k\geq r$, then $X$ is Chow unstable with respect to $\cO_X(h)$ for any $h>>0$. We shall denote this by saying that $X$ is {\em asymptotically Chow unstable with respect to $\cO_X(1)$.}

So, from the Chow instability of the fibres of $f$ we derive a condition on the singularities of $X$ which implies its asymptotic Chow instability.
On the other hand we can have an unstable $X$ whose fibres are stable.
Indeed, in \cite{Stoppa-Tenni} some examples are constructed of relative hypersurfaces $X\subset \pr_B(\cE)$ which are smooth and asymptotically Chow unstable with $\cO_X(1)$ (twisted by some line bundle on the base).
As these are smooth varieties, the general fibres are themselves smooth hypersurfaces and thus asymptotically Chow stable with respect to the restriction of the line bundle considered.
\end{remark}

\begin{remark}
Theorem \ref{slc} applies to big linear systems giving a condition of singularity on all of their members.
It is of course interesting to know if the general member is irreducible or not.
In Section \ref{codimension} (Proposition \ref{irreducible}) we see that in most cases the answer is positive due to the codimension of the fixed locus.
Let us now  observe the following.
Suppose that an hypersurface $X$ in the linear system $ |\cO_{\pr}(k)\otimes \pi^*\cM^{-1}|$  with $y/k>\mu$ is reducible, and let $X=X_1+X_2$, with $X_i\in  |\cO_{\pr}(k_i)\otimes \pi^*\cM_i^{-1}|$. Then $k=k_1+k_2$ and $y= y_1+y_2$. If one of the $X_i's$, say $X_1$ is vertical, i.e. $k_1=0$ and $y_i\geq 0$, then we can just remove it, and of course $y_2/k_2=y/k_2\geq y/k>\mu$.
If both $k_i$'s are greater than $0$, then at least one of them has to satisfy $y_i/k_i>\mu$. So we can always find an irreducible hypersurface with $y/k>\mu$.
\end{remark}

\subsection{Asymptotical behavior}\label{asympt}
If one considers a divisor $D$ on a smooth variety $Y$, we have that given any  $m\in \mathbb N^{+}$
$$
lct(Y, m D)= \frac{lct(Y, D)}{m}.
$$
This behaviour can suggest the idea that there can be a fixed horizontal divisorial component of the stable base locus of the line bundle $\cO_{\pr}(k)\otimes \pi^*\cM^{-1}$ when $y/k\in (\mu, \mu_1]$.
We see in \ref{codimension} that this is not necessarily the case, but nevertheless we have a strong information on the asymptotical behavior of the singular locus of the hypersurfaces in the linear system $|\cO_\pr(mk)\otimes \pi^*\cM^{-m}|$, as $m$ grows.

\begin{corollary}\label{cor}
Let $\cE$ be an unstable bundle. Let $k\geq r$ and let $\cM$ be a line bundle on $B$ of degree $y$.
Suppose that  $y/k\in (\mu, \mu_1]$.
Then there is a subvariety $\cZ\subset \pr$, such that $\pi(\cZ)=B$, with the property that  any divisor $Z_m\in |\cO_\pr(mk)\otimes \pi^*\cM^{-m}|$ has  multiplicity at least $(km)/r$ in any point $p\in\cZ$:
$$\mbox{ord}_pZ_m\geq \frac{km}{r}.$$
\end{corollary}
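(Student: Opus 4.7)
The plan is to bootstrap Theorem \ref{slc} via the classical inequality relating log canonical threshold to multiplicity, and then to extract a fixed subvariety using the asymptotic multiplier ideal of $D := \cO_\pr(k)\otimes \pi^*\cM^{-1}$.

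First I would observe that the hypothesis $y/k > \mu$ is preserved when $D$ is replaced by any multiple $mD$, since $my/(mk)=y/k$. Applying Theorem \ref{slc} to each $Z_m\in|mD|$ therefore yields
$$
lct(\pr,Z_m)\;<\;\frac{r}{mk}.
$$
I would then invoke the classical lower bound
$$
lct_p(Y,E)\;\geq\;\frac{1}{\mathrm{mult}_p(E)}
$$
valid at any smooth point $p$ of a smooth variety $Y$ for an effective Cartier divisor $E$ (a consequence of Howald's formula for monomial multiplier ideals, or of Skoda-type $L^2$ estimates). Combined with the previous bound, this already confines the non-log-canonical locus of $(\pr,\frac{r}{mk}Z_m)$ to the set $\{p\in\pr\mid \mathrm{mult}_p(Z_m)\geq mk/r\}$; moreover, by applying Theorem \ref{TEOlct} fibrewise to the restricted pair $(\Sigma,\frac{r}{mk}(Z_m)_{|\Sigma})$ one sees that this locus meets every fibre of $\pi$, hence surjects onto $B$.

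The crux is to extract from these $Z_m$-dependent bad loci a single subvariety $\cZ$. I would take $\cZ$ to be (an irreducible component of) the co-support of the asymptotic multiplier ideal sheaf $\cJ(\pr,c\cdot\|D\|)$ for some rational $c$ just below $r/k$. The previous step shows this ideal is a proper subsheaf of $\cO_\pr$ and that its co-support surjects onto $B$. Then for any $p\in\cZ$ and any $Z_m\in|mD|$, the inclusion $\cJ(\pr,c\cdot\|D\|)\subset\cJ(\pr,\frac{c}{m}Z_m)$, together with the blow-up discrepancy computation that underlies $lct\geq 1/\mathrm{mult}$, delivers the required estimate $\mathrm{ord}_p(Z_m)\geq mk/r$.

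The main obstacle is precisely this last uniform step: producing an honestly fixed $\cZ$ and transferring the asymptotic non-klt information back into a pointwise multiplicity bound for every $Z_m$ in every $|mD|$. An alternative, perhaps cleaner, route is to appeal to the Nakayama $\sigma$-decomposition of $D$ developed in the subsequent sections of the paper: there a specific closed subvariety of $\pr$ of the form $\pr(\cE/\cE_j)$ (with $\mu_{j+1}< y/k\leq \mu_j$) is shown to appear as a fixed component of the base locus of $|mD|$ with a computable coefficient, and a direct check confirms that this coefficient exceeds $k/r$ precisely under the hypothesis $y/k\in(\mu,\mu_1]$. Taking $\cZ$ to be this subvariety then gives the required statement without recourse to the multiplier ideal formalism.
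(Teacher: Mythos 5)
The first half of your argument (applying Theorem \ref{slc} to each $Z_m$, the bound $lct_p\geq 1/\mathrm{ord}_p$, and the fibrewise use of Theorem \ref{TEOlct}/inversion of adjunction to see that the high-multiplicity locus of each $Z_m$ meets every fibre) is exactly the paper's argument. The gap is in the uniformity step you yourself flag as the crux, and neither of your two completions closes it. In the multiplier-ideal route, the stated inclusion is backwards: the true containments are $\cJ(\pr,\tfrac{c}{m}Z_m)\subseteq\cJ(\pr,\tfrac{c}{m}\cdot|mD|)\subseteq\cJ(\pr,c\cdot\|D\|)$, and it is this direction (not the one you wrote) that would let non-triviality of the asymptotic ideal at $p$ force $\mathrm{ord}_p(Z_m)\geq m/c$; that slip is repairable. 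What is not justified is the assertion that ``the previous step shows'' the co-support of $\cJ(\pr,c\cdot\|D\|)$ surjects onto $B$: the previous step only produces, for each individual member $Z_m$, a member-dependent bad locus meeting every fibre, and a priori these high-multiplicity points could move with $Z_m$; the asymptotic multiplier ideal is non-trivial only where the whole system is uniformly singular, so its co-support dominating $B$ is precisely the uniform statement you are trying to prove, not an available input.

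The alternative route via the $\sigma$-decomposition also fails in the stated generality. Taking $\cZ=\pr(\cE/\cE_j)$, Theorem \ref{codim-fix} gives multiplicity $m\frac{y-\mu_{j+1}k}{\mu_1-\mu_{j+1}}$ along it (and only under a divisibility hypothesis), and this is $\geq mk/r$ if and only if $y/k\geq\bigl(\mu_1+(r-1)\mu_{j+1}\bigr)/r$. Contrary to your ``direct check,'' this threshold is not equivalent to $y/k>\mu$: as the paper itself stresses in the remark after Corollary \ref{cor-fix}, it can lie on either side of $\mu$ once the Harder--Narasimhan filtration has length $\ell\geq 3$, so the hypothesis $y/k\in(\mu,\mu_1]$ does not guarantee the needed coefficient (it does only for $\ell=2$, which is the content of Propositions \ref{zar} and \ref{fixed}). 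The paper's own proof avoids both devices: for each $Z_m$ it takes the closed locus $\cZ_m=\{p\mid \mathrm{ord}_p Z_m\geq km/r\}$, which surjects onto $B$ by the first half of the argument, and then observes that this locus lies in the (stable) fixed locus of the linear systems $|\cO_\pr(mk)\otimes\pi^*\cM^{-m}|$, which is what allows one to choose $\cZ$ independently of $m$ and of the member.
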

\begin{proof}
We just need to recall that the log canonical threshold of a pair $(Y,\Delta)$ can be defined locally in a neighbour of a point $p$, and that
$$lct(Y,\Delta)=\inf_{p\in Y}lct_p(Y,\Delta).$$
The following inequality holds (see for instance \cite[Property 1.14]{mustata}):
$$ lct_p(Y,\Delta)\geq \frac{1}{ord_p(\Delta)}.$$
So from Theorem \ref{slc}, and from Inversion of Adjunction we have that, for any $Z_m\in |\cO_\pr(mk)\otimes \pi^*\cM^{-m}|$  the locus
$$\cZ_m:=\{ p\in \pr \,| \,ord_p(Z_m)\geq km/r \}$$
is dominant over $B$.
Of course $\cZ_m$ is a closed subvariety of $\pr$, thus $\pi(\cZ_m)=B$.
Now observe that this locus is contained in the fixed locus of any $|\cO_\pr(mk)\otimes \pi^*\cM^{-m}|$ for any $m$, so it is contained in the stable fixed locus, and we can just drop the dependence from $m$.
\end{proof}
\begin{remark}
Using the standard notations for  the multiplicity of a linear system (\cite[Def. 2.3.11]{Laz-posI}), we can say that $mult_p ||\cO_\pr(k)\otimes \pi^*\cM^{-1}||\geq k/r$.
\end{remark}

All these results are natural in the light of  the following result of Wilson (see \cite[Theorem 2.39]{Laz-posI}):

\begin{theorem}[Wilson]\label{wilson} A divisor $D$ on a smooth projective variety is nef and big if and only if there exists an effective divisor $N$ and an integer $m_0\geq 1$ such that the linear system $|mD-N|$  is base point free for any $m>m_0$.
\end{theorem}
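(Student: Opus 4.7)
The plan is to prove both directions by fairly standard arguments. The easy direction asserts that a divisor $D$ satisfying the hypothesis is automatically nef and big; the substantive content lies in building the effective divisor $N$ from the nefness and bigness of $D$.

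For the implication $(\Leftarrow)$: nefness of $D$ follows at once from $(mD-N)\cdot C\geq 0$ for every irreducible curve $C$ (a consequence of base-point freeness), since dividing by $m$ and letting $m\to\infty$ yields $D\cdot C\geq 0$. For bigness, a fixed section cutting out $N$ provides an injection $H^0(Y,r(mD-N))\hookrightarrow H^0(Y,rmD)$ for every $r\geq 1$, while the base-point-free system $|mD-N|$ defines a morphism $\phi_m\colon Y\to Z_m\subset \pr^{N_m}$ whose image has some dimension $d_m$. The Hilbert function of $Z_m$ then forces $h^0(Y,rmD)$ to grow at least like $r^{d_m}$ as $r\to\infty$; it remains to check that $d_m=\dim Y$ for some $m>m_0$, e.g.\ by ruling out the degenerate scenario $(mD-N)^n=0$ for every such $m$, and from this $D$ is big.

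For the implication $(\Rightarrow)$: apply Kodaira's Lemma to the big divisor $D$ to obtain $pD\sim A_0+N$ with $A_0$ ample, $N$ effective, and $p\geq 1$. Upgrading $A_0$ to a sufficiently large multiple $A$ (after correspondingly adjusting $p$ and $N$), we may assume that $A-K_Y-nH$ is ample for some fixed very ample $H$. For any nef divisor $B$ on $Y$, Kawamata--Viehweg vanishing applied to $A-iH-K_Y+B$ (nef and big for $1\leq i\leq n$) yields
$$H^i(Y,\,A+B-iH)\;=\;H^i(Y,\,K_Y+(A-iH-K_Y+B))\;=\;0 \quad \text{for all } i>0.$$
Hence $A+B$ is $0$-regular with respect to $H$, and therefore globally generated by the Castelnuovo--Mumford criterion. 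Taking $B=(m-p)D$, which is nef whenever $m\geq p$, we conclude that $|mD-N|=|A+(m-p)D|$ is base-point free.

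The main obstacle is the careful positioning of the ample part $A$ in the $(\Rightarrow)$ direction so that its $0$-regularity with respect to $H$ is preserved under the addition of any nef divisor $B$; this is precisely what the Castelnuovo--Mumford/Kawamata--Viehweg machinery above arranges. The $(\Leftarrow)$ direction is essentially formal once one has the intersection-theoretic statement linking the image dimension of $\phi_m$ with the asymptotic growth of global sections of $rmD$.
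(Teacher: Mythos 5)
First, a remark on the comparison: the paper does not prove this statement at all — it is quoted from Lazarsfeld \cite[Thm.\ 2.3.9]{Laz-posI} — so there is no internal proof to measure you against. Your $(\Rightarrow)$ direction is correct and is essentially the textbook argument: Kodaira's lemma to write $pD\sim A+N$, then Kawamata--Viehweg vanishing (legitimate here since the variety is smooth) to make $A+B$ $0$-regular with respect to a very ample $H$ for every nef $B$, and Mumford's regularity criterion to get global generation of $A+(m-p)D\sim mD-N$. That part stands.

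The genuine gap is in the $(\Leftarrow)$ direction, precisely at the step you defer: ``it remains to check that $d_m=\dim Y$ for some $m>m_0$, e.g.\ by ruling out the degenerate scenario $(mD-N)^n=0$ for every such $m$.'' This scenario cannot be ruled out; it actually occurs. Take $Y=C\times \pr^1$ with $C$ a smooth curve of genus $g\geq 1$, let $p\colon Y\to C$ be the first projection, and set $D=p^*P$, $N=p^*P'$ for distinct points $P,P'\in C$. Then $H^0(Y, mD-N)\cong H^0(C, mP-P')$, and $|mP-P'|$ is base point free on $C$ as soon as $m-1\geq 2g$; hence $|mD-N|$ is base point free for all $m\gg 0$, while $D$ is nef with $D^2=0$, so not big. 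So the implication ``freeness of $|mD-N|$ for all $m>m_0$ $\Rightarrow$ $D$ big'' is false, and no completion of your Hilbert-function argument can produce it: one really can have $d_m<\dim Y$ for every $m$. What your argument does prove, and what the paper actually uses, is the nefness part of the converse (your limiting argument is fine) together with the forward implication; the ``if and only if'' in the statement is an overstatement of Wilson's theorem as given in \cite{Laz-posI}, whose content is the $(\Rightarrow)$ direction. To be correct you should either weaken the conclusion of the converse to ``$D$ is nef'' or add a hypothesis guaranteeing $(mD-N)^n>0$ for some $m$, rather than present bigness as a checkable leftover.
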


\begin{remark}
The conditions in Theorem \ref{TEOlct} and Corollary \ref{cor} imply that $kH -\pi^*M$ is big but {\em not} nef for $y/k\in (\mu_\ell,\mu_1]$. Hence, applying Wilson's result, we are sure that  the linear system $|\cO_\pr(mk)\otimes \pi^*\cM^{-m}|$ has at least some points with  multiplicity unbounded wit respect to $m$. In Corollary \ref{cor} we give two more pieces of information for the case $y/k\in (\mu, \mu_1]$: that there is an entire horizontal subvariety of unbounded points, and an explicit lower bound on their multiplicity. In the next section we see that in some cases we can extract similar, sometimes stronger, information from directly studying the geometry of the divisors in $\pr$.
\end{remark}

%
%
%
\section{Zariski decomposition, fixed locus and $lct$ of divisors in $\pr$}\label{explicit}
In this section we make a direct study of the fixed locus of the divisors on $\pr$. 

\subsection{The  case $\rk \, \cE=2$}\label{rk2}
Let us treat the case of a vector bundle $\cE$ of rank $2$, which is not covered by the previous results.
We will see that this simple case will turn out to be interesting.
In this case, a relative hypersurface is just a (not necessarily reduced) multisection of the ruled surface $\pr=\pr_B(\cE)$.
The map $f\colon X\longrightarrow B$ is a finite map, not a fibration.
However,  a result analogous to Corollary \ref{corCH} still holds, as we now see (Lemma \ref{rk2}).
Along the way we will introduce some ideas which will be developed and used more systematically in the subsection \ref{higher-rank}.

Let $\cE$ be an unstable bundle of degree $d$ and rank $2$.
The Harder-Narasimhan sequence has of course $3$ pieces
$$
0=\cE_0\subset \cE_1\subset \cE_2=\cE,
$$
and we have that $\mu_1=\deg \cE_1\in \mathbb Z$, that $\mu_1>\mu=d/2$ by definition of $\cE_1$ and  that $\mu_2=d-\mu_1\in \Z$.
Note that $h^0(\pr, \cO_\pr(1)\otimes \pi^*\cE_1^{-1})=1$, and the effective divisor in the linear system $|\cO_\pr(1)\otimes \pi^*\cE_1^{-1}|$ is $\pr(\cE/\cE_1)$, which is the minimal section of the ruled surface $\pr$ (see e.g. \cite[Chap.V, Sec.2]{Har}). We call this divisor $N$.
Observe that any multiple of $N$ is fixed, as for any $m>0$ we have that
$$h^0(\pr,\cO_{\pr}(mN))=h^0(B, \sym^m\cE\otimes \cE_1^{-1})=1.$$
We can compute the Zariski decomposition of the  effective divisor $X$ as follows.
\begin{lemma}\label{rk2lemma}
Suppose that $X\in |\cO_\pr(k)\otimes \pi^*\cM^{-1}|$ as above.
If $y/k\geq \mu_2$, the negative part of its Zariski $\Q$-decomposition is
\begin{equation}\label{ZD}
 \frac{y-k\mu_2}{\mu_1-\mu_2}N,
\end{equation}
where $N=\pr(\cE/\cE_1)$.
\end{lemma}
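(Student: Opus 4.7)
The plan is to write the classes in the basis $\{H,\Sigma\}$ of $N^1(\pr)$ (with $H=[\cO_\pr(1)]$ and $\Sigma$ the class of a fibre) and then verify directly that the proposed splitting satisfies the defining properties of the Zariski decomposition.

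First I would record the intersection numbers $H^2=d$, $H\cdot\Sigma=1$, $\Sigma^2=0$. Since $N\in|\cO_\pr(1)\otimes\pi^*\cE_1^{-1}|$ and $\deg\cE_1=\mu_1$, the class of $N$ is $H-\mu_1\Sigma$, so $N^2=d-2\mu_1=\mu_2-\mu_1<0$ and
\[
X\cdot N=(kH-y\Sigma)\cdot(H-\mu_1\Sigma)=kd-k\mu_1-y=k\mu_2-y\le 0,
\]
the last inequality being the hypothesis $y/k\ge\mu_2$. In particular $N$ is an irreducible curve of negative self-intersection, the only candidate to carry the negative part.

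Next I would set $a:=(y-k\mu_2)/(\mu_1-\mu_2)\ge 0$ and $P:=X-aN$, so that $(P\cdot N)=(X\cdot N)-aN^2=0$ by the very choice of $a$. A direct computation gives
\[
P\equiv (k-a)H-(y-a\mu_1)\Sigma,\qquad \frac{y-a\mu_1}{k-a}=\mu_2=\mu_\ell,
\]
so by Miyaoka's characterization of $\mathrm{Nef}(\pr)$ recalled in Section~\ref{birational} the class $P$ lies precisely on the boundary of the nef cone; in particular $P$ is nef. Non-negativity of $k-a=(k\mu_1-y)/(\mu_1-\mu_2)$ follows from effectiveness of $X$ combined with Nakayama's description of the pseudoeffective cone ($y/k\le\mu_1$), so $P$ is a well-defined $\Q$-divisor class.

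To conclude I would invoke the uniqueness of the Zariski decomposition on the smooth surface $\pr$: the splitting $X=P+aN$ we have exhibited satisfies all of Zariski's defining conditions ($P$ nef, $aN$ effective, $P\cdot N=0$, and the $1\times 1$ intersection matrix $(N^2)$ negative definite), hence it \emph{is} the Zariski $\Q$-decomposition of $X$, and its negative part is $aN=\frac{y-k\mu_2}{\mu_1-\mu_2}N$. The only delicate point, essentially the whole substance of the argument, is the nefness of $P$, which is why the problem reduces neatly to checking that the residual slope $(y-a\mu_1)/(k-a)$ collapses exactly onto $\mu_\ell$; this is forced by the choice of $a$ making $P$ orthogonal to $N$, and is the reason one expects such a clean closed-form answer.
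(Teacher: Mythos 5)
Your proposal is correct and follows essentially the same route as the paper: you exhibit the same candidate splitting $P=X-\frac{y-k\mu_2}{\mu_1-\mu_2}N$, verify that $P$ is a nonnegative multiple of the nef boundary class $H-\mu_2\Sigma$ (nonnegativity via Nakayama's pseudoeffectivity bound, nefness via Miyaoka), check $P\cdot N=0$ and $N^2<0$, and conclude by the defining (and uniqueness) properties of the Zariski decomposition on a surface. The only cosmetic difference is that you make the orthogonality and negative-definiteness checks, and the uniqueness step, fully explicit.
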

\begin{proof}
The divisor $N$ is effective, it has only one irreducible component and
$$N^2=  d-2\mu_1<0.$$
Moreover, let $P$ be the $\Q$-divisor $P:= X- \frac{y-k\mu_2}{\mu_1-\mu_2}N$.
The numerical class of $P$ is
$$
P\equiv kH-y\Sigma- \frac{y-k\mu_2}{\mu_1-\mu_2}N=\frac{k\mu_1-y}{\mu_1-\mu_2}\left(H-\mu_2\Sigma \right),
$$
and $$\frac{k\mu_1-y}{\mu_1-\mu_2}\geq 0,$$
by Nakayama's result, so $P$ is nef.
Moreover
$
PN=d-\mu_1-\mu_2=0.
$
Thus the definition of the Zariski decomposition for surfaces is satisfied.
\end{proof}

\begin{remark}
It can very well happen that a line bundle with numerical class $[H-\mu_1\Sigma]$ is not effective.
Consider for instance the following case: let $M$ a divisor on $B$ of degree $ \mu_1$ such that $M\not \sim \cE_1$.
Then the divisor $H- \pi^*M$ is numerically equivalent (but not linearly equivalent) to $H-\pi^*\cE_1$ but it is not effective, because of the unicity of the greatest destabilizing  subsheaf $\cE_1\subseteq \cE$.
However, also in this case the negative part of the Zariski decomposition of the pseudoeffective divisor $H- \pi^*M$ is $ \frac{y-k\mu_2}{\mu_1-\mu_2}N$.
Note that anyhow in this case the numerical class $[H-\mu_1\Sigma]$ {\em is }effective.
For higher ranks of $\cE$ it can also happen that none of the divisors in the numerical class $[H-\mu_1\Sigma]$  are effective, and that not even some power $[m(H-\mu_1\Sigma)] $ contains an effective divisor: see for instance \cite[Example 1.51]{Laz-posI} and \cite[I.10.5]{Har2}.
\end{remark}
\begin{remark}
Let $X$ be a relative hypersurface in $\pr$. 
The restriction of $X$ with the general fibre $\Sigma\cong \pr^1$ is a $0$-cycle in $\pr^1$ of length $k$.
Let us consider the  Chow stability of these objects: such a cycle is Chow semistable if and only if it has only points of multiplicity $\leq k/2$ (\cite[sec.1.7]{Mum}).
\end{remark}

We are ready to prove the following result.
\begin{proposition}\label{rk2prop}
Let $\cE$ be a $\mu$-unstable rank $2$ vector bundle over $B$ and let $X$ be a relative hypersurface $X\in |\cO_\pr(k)\otimes \pi^*\cM^{-1}|$.
Suppose that $ (\mu_1-\mu_2)$ divides $(y-\mu_2k)$. If  $\frac{y}{k}>\mu$, any fibre of $f\colon X\longrightarrow B$ is Chow unstable with respect to $\cO_F(1)$.
\end{proposition}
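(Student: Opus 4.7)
The plan is to extract the conclusion directly from the rank--$2$ Zariski decomposition given by Lemma \ref{rk2lemma} and the classical Mumford GIT criterion for $0$-cycles on $\mathbb P^1$.

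Set $m:=\dfrac{y-k\mu_2}{\mu_1-\mu_2}$, which is a positive integer by the divisibility hypothesis. Lemma \ref{rk2lemma} tells us that $X=mN+P$ as $\mathbb Q$-divisors on the surface $\mathbb P$, where $N=\mathbb P(\cE/\cE_1)$ is the minimal section, $P$ is nef and $P\cdot N=0$. The first step is to show that $N$ occurs in the effective divisor $X$ with integer coefficient at least $m$. Write $X=aN+R$ where $a\in\mathbb Z_{\geq 0}$ and $R$ is effective and does not contain $N$ as a component. Comparing with the Zariski decomposition gives $R-(m-a)N=P$, and intersecting with $N$ yields $R\cdot N=(m-a)N^{2}$. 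Since $N^{2}=d-2\mu_{1}<0$ and $R\cdot N\geq 0$ (as $R$ is effective and does not contain $N$), we must have $m-a\leq 0$, i.e.\ $a\geq m$. Thus $mN\leq X$ as honest integral divisors.

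Next I restrict to a fibre. Every fibre $\Sigma_{b}=\pi^{-1}(b)\cong\mathbb P^{1}$ meets the section $N$ transversely in a single point $p_{b}$, so $(mN)\cdot\Sigma_{b}=m\,p_{b}$ as a $0$-cycle. Since $mN\leq X$ and $X\cdot\Sigma_{b}=k$, the fibre $F=X\cap\Sigma_{b}$ (a degree-$k$ $0$-cycle on $\Sigma_{b}\cong\mathbb P^{1}$) satisfies $\mathrm{mult}_{p_{b}}(F)\geq m$. The hypothesis $y/k>\mu=(\mu_{1}+\mu_{2})/2$ rearranges to $2(y-k\mu_{2})>k(\mu_{1}-\mu_{2})$, that is, $m>k/2$. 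Consequently $F$ contains the point $p_{b}$ with multiplicity strictly greater than $k/2$, and by the standard GIT description of stability for $0$-cycles on $\mathbb P^{1}$ (recalled in the remark preceding the proposition, cf.\ \cite[\S 1.7]{Mum}) this forces $F$ to be Chow unstable with respect to $\cO_{F}(1)$.

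The only subtle point—and where I would be most careful in writing the proof—is the justification that the prime divisor $N$ appears in the integral divisor $X$ with multiplicity at least $m$; the sign argument above uses only the uniqueness of the Zariski decomposition and the negativity of $N^{2}$, so it goes through cleanly. Everything else is numerics: the divisibility condition makes $m$ an integer, the inequality $m>k/2$ is equivalent to $y/k>\mu$, and fibres of $f$ meet the section $N$ in exactly one point. Note that one should also briefly observe that if $X$ happens to contain a vertical component $\pi^{*}D$, this can be removed as in the remark following Theorem \ref{corCH}, so the statement about ``any fibre of $f$'' is the geometrically meaningful one for the horizontal part.
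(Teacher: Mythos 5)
Your proposal is correct and follows essentially the same route as the paper's proof: use the Zariski decomposition of Lemma \ref{rk2lemma} to see that $X$ contains the section $N=\pr(\cE/\cE_1)$ with multiplicity $\frac{y-\mu_2k}{\mu_1-\mu_2}$, restrict to a fibre to get a point of multiplicity $>k/2$ in the degree-$k$ $0$-cycle, and conclude by Mumford's criterion for Chow (in)stability of $0$-cycles on $\pr^1$. Your intersection-theoretic argument (using $N^2<0$, $P\cdot N=0$ and effectivity of the residual part) that the negative part of the Zariski decomposition genuinely occurs as an integral component of $X$ is a detail the paper leaves implicit, and it is a welcome clarification rather than a deviation.
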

\begin{proof}
By the decomposition proved in Lemma \ref{rk2lemma} we know that $X$ has a fixed component of multiplicity $\frac{y-\mu_2k}{\mu_1-\mu_2}$.
Thus we see that any fibre $X\cap \Sigma$ contains  the point $\Sigma\cap N$ with multiplicity at least $\frac{y-\mu_2k}{\mu_1-\mu_2}$. But observe that
$$
\frac{y-\mu_2k}{\mu_1-\mu_2}>\frac{k}{2} \Longleftrightarrow \frac{y/k-\mu_2}{\mu_1-\mu_2}>\frac{1}{2}  \Longleftrightarrow \frac{y}{k}>\mu.
$$
\end{proof}


\subsection{The case  $\ell=2$}\label{higher-rank}
It is natural to try and extend  the same reasoning as in \ref{rk2} to higher rank cases.
Let us start with the easiest possible extension: when the first sheaf in the Harder-Narasimhan filtration of $\cE$ has rank $1$, and $\ell=2$.

\begin{proposition}\label{zar}
Let us suppose that $\rk\, \cE_1=1$ and that $\ell=2$.
Let $X$ be a relative hypersurface in the linear system $|\cO_\pr(k)\otimes \pi^*\cM^{-1}|$ with $y/k\geq \mu_2$.
Suppose that $ (\mu_1-\mu_2)$ divides $(y-\mu_2 k)$. Let $N:= \pr(\cE/\cE_1)\in |H-\pi^*\cE_1|$.
Then the divisor $\frac{y-k\mu_2}{\mu_1-\mu_2}N$ belongs to the scheme-theoretic fixed locus of $X$.

In particular, if $y/k\geq \mu$ (resp. $y/k>\mu$) then
$$
lct(\Sigma, {X}_{|\Sigma}) \leq \frac{r}{k} \quad (\mbox{ resp. }<)
$$
\end{proposition}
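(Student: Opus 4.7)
The plan is to exhibit $mN$ with $m=(y-k\mu_2)/(\mu_1-\mu_2)$ as a fixed component of the linear system by an explicit Harder--Narasimhan computation on the pushforward, then translate this divisorial containment into the $lct$ bound by restricting to a general fibre. Since $\rk\, \cE_1=1$, the composition $\pi^*\cE_1 \hookrightarrow \pi^*\cE \twoheadrightarrow \cO_\pr(1)$ is a nonzero global section of $\cO_\pr(1)\otimes \pi^*\cE_1^{-1}$ whose vanishing divisor is exactly $N=\pr(\cE/\cE_1)$, so $\cO_\pr(N)\cong \cO_\pr(1)\otimes \pi^*\cE_1^{-1}$. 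The hypotheses ensure that $m$ is a nonnegative integer with $m\leq k$ (the latter thanks to Nakayama's pseudo-effectivity criterion, which must hold as $|\cO_\pr(k)\otimes \pi^*\cM^{-1}|$ is assumed nonempty). With this setup, the claim that $mN$ lies in the scheme-theoretic fixed locus is equivalent to showing that multiplication by the $m$-th power of the above section induces an isomorphism
$$H^0\bigl(\pr,\,\cO_\pr(k-m)\otimes \pi^*(\cE_1^m\otimes \cM^{-1})\bigr)\xrightarrow{\sim} H^0\bigl(\pr,\,\cO_\pr(k)\otimes \pi^*\cM^{-1}\bigr).$$

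Pushing forward to $B$ via $\pi_*\cO_\pr(j)\cong \sym^j\cE$, this isomorphism reduces to the vanishing of $H^0(B,(\sym^k\cE/F_m)\otimes \cM^{-1})$, where $F_m:=\cE_1^m\cdot \sym^{k-m}\cE\subseteq \sym^k\cE$ is the natural subsheaf (canonically identified with $\cE_1^m\otimes \sym^{k-m}\cE$ because $\cE_1$ is a line bundle). The quotient $\sym^k\cE/F_m$ carries a filtration whose graded pieces are $\cE_1^i\otimes \sym^{k-i}(\cE/\cE_1)$ for $0\leq i<m$. Each piece is semistable on $B$, this being the one place the proof uses characteristic zero, via semistability of symmetric powers of the semistable bundle $\cE/\cE_1$; and after twisting by $\cM^{-1}$ it has slope $k\mu_2+i(\mu_1-\mu_2)-y$, which is strictly negative for all $i<m$ by the very definition of $m$ together with $\mu_1>\mu_2$. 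Since a torsion-free semistable sheaf of negative slope on a curve has no global sections, every graded piece has trivial $H^0$ and the isomorphism follows.

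For the $lct$ inequality, restrict to a general fibre $\Sigma\cong \pr^{r-1}$ of $\pi$: there $N_{|\Sigma}$ is a hyperplane $L\subset \Sigma$, and $mN\leq X$ gives $mL\leq X_{|\Sigma}$. Since $L$ is smooth we have $lct(\Sigma,mL)=1/m$, so monotonicity of $lct$ yields
$$lct(\Sigma,X_{|\Sigma})\leq \frac{1}{m}=\frac{\mu_1-\mu_2}{y-k\mu_2}.$$
The identity $r\mu=\mu_1+(r-1)\mu_2$, which holds because $\rk\, \cE_1=1$ and $\ell=2$, is equivalent by a short manipulation to $\frac{\mu_1-\mu_2}{y-k\mu_2}\leq \frac{r}{k}\iff y/k\geq \mu$, with strict inequalities matching. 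The main technical obstacle is the semistability input for the symmetric powers of $\cE/\cE_1$; the rest is filtration analysis and direct arithmetic.
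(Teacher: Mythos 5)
Your argument is correct, but it proves the key containment by a genuinely different route than the paper does. The paper obtains the statement that every member of $|\cO_\pr(k)\otimes \pi^*\cM^{-1}|$ contains $\frac{y-k\mu_2}{\mu_1-\mu_2}N$ by quoting Nakayama \cite[Chap.~IV]{nakayama}: the negative part of the $\sigma$-decomposition of $X$ is exactly $\frac{y-k\mu_2}{\mu_1-\mu_2}N$, and by the definition of the $\sigma$-decomposition this coefficient is the infimum of the multiplicity along $N$ over the linear system. You instead re-derive this special case from scratch: $\cO_\pr(N)\cong \cO_\pr(1)\otimes\pi^*\cE_1^{-1}$ via the composite $\pi^*\cE_1\hookrightarrow\pi^*\cE\twoheadrightarrow\cO_\pr(1)$, the fixed-component claim becomes surjectivity of multiplication by the $m$-th power of that section on global sections, and after pushing forward to $B$ this follows from the vanishing of $H^0\bigl(B,(\sym^k\cE/\cE_1^m\cdot\sym^{k-m}\cE)\otimes\cM^{-1}\bigr)$, which you get from the filtration with semistable graded pieces $\cE_1^i\otimes\sym^{k-i}(\cE/\cE_1)$ whose twisted slopes $k\mu_2+i(\mu_1-\mu_2)-y$ are negative precisely for $i<m$ (using characteristic zero for semistability of symmetric powers, and $m\le k$ from effectivity/pseudoeffectivity). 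This is sound, including the degenerate cases $m=0$ and $m=k$, and the second half of your argument (restriction to a general fibre, $lct$ of a multiple hyperplane, monotonicity, and the arithmetic using $r\mu=\mu_1+(r-1)\mu_2$ since $\rk\,\cE_1=1$) is essentially identical to the paper's. What your approach buys is self-containedness: you avoid the $\sigma$-decomposition machinery entirely, in effect reproving the relevant case of Nakayama's theorem by an elementary Harder--Narasimhan computation on the base. What the paper's citation buys is brevity and a framework that is reused immediately afterwards (Theorem \ref{teo-rk2}, Proposition \ref{fixed}, Theorem \ref{codim-fix}), where $\rk\,\cE_1>1$ or $\ell>2$ and the negative part only appears after blowing up, so there is no divisorial fixed component and your multiplication-by-a-section argument would not carry over as stated.
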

\begin{proof}
Let us first observe that $N=\pr(\cE/\cE_1)$ is effective and  fixed. Indeed, we have that
$$h^0(\pr , \cO_\pr (mN))=h^0(B, \sym^m\cE\otimes \cE_1^{-m}) =1$$
 for any $m\geq 1$, because the first piece of the Harder-Narasimhan filtration of $\sym^m\cE$ is $\sym^m\cE_1=\cE_1^{m}$.
 In \cite[Chap.IV]{nakayama} it is proved that the $\Q$-divisor $\frac{y-k\mu_2}{\mu_1-\mu_2}N$ is the negative part of the $\sigma$-decomposition of $X$ as a divisor.
By the definition of the $\sigma$-decomposition of $X$ as a divisor \cite[Chap.III, Def. 1.1 and Def.1.12]{nakayama}, we have that, under our divisibility assumption\footnote{It would not be hard, and in some cases would be natural, to extend this discussion to $\Q$-divisors, or even to $\R$-divisors, but it goes beyond our scope, and we rather not introduce the machinery of $\R$-divisors.},  the number $\frac{y-k\mu_2}{\mu_1-\mu_2}$ is the infimum of $mult_N(Y)$ for any $Y\in |\cO_\pr(k)\otimes \pi^*\cM^{-1}|$.
Thus, any $X\in |\cO_\pr(k)\otimes \pi^*\cM^{-1}|$ contains $ \frac{y-k\mu_2}{\mu_1-\mu_2} N$ in its fixed locus.
Hence also $X_{|\Sigma}$ contains the multiple component $ \frac{y-k\mu_2}{\mu_1-\mu_2} N_{|\Sigma}$ for any fibre $\Sigma$, and so
$$
lct(\Sigma,X_{|\Sigma})\leq lct\left(\Sigma, \frac{y-k\mu_2}{\mu_1-\mu_2}N_{|\Sigma}\right)= \frac{\mu_1-\mu_2}{y-k\mu_2}.
$$
The last inequality is due to the fact that $lct(N_{|\Sigma}) =1$ because $N=\pr(\cE/\cE_1)$ is just a projective subbundle of $\pr$.
Now, it is immediate to check that
$$
\frac{\mu_1-\mu_2}{y-k\mu_2}\leq \frac{r}{k}\quad
 \mbox{if and only if } \quad  \frac{ry}{k}\geq \mu_1+(r-1) \mu_2=d.
 $$
\end{proof}

\begin{remark}\label{bpf}
Suppose that the positive part of the $\sigma$-decomposition of $|\cO_\pr(k)\otimes \pi^*\cM^{-1}|$ is base point free.
Note that this is a nef and not ample divisor whose numerical class is $\frac{\mu_1k-y}{\mu_1-\mu_2}(H-\mu_2\Sigma)$.
By Bertini's Theorem if  $ (\mu_1-\mu_2)$ divides $(y-\mu_2 k)$, for a  general $X\in |\cO_\pr(k)\otimes \pi^*\cM^{-1}|$ we have that $X$ has two components: the $\pr^{r-2}$ projective bundle  $N$  with multiplicity $\frac{y-\mu_2 k}{\mu_1-\mu_2}$ plus another smooth component, which we can also assume to meet transversally $N$. We thus  have that, for  a general $X$
$$lct(\pr,X)=lct\left(\pr,\frac{y-\mu_2 k}{\mu_1-\mu_2}N\right)= \frac{\mu_1-\mu_2}{y-k\mu_2}= lct(\Sigma, X_{|\Sigma}).$$
However, it can happen that the boundary class $[H-\mu_\ell\Sigma]$ of the nef cone is not base point free, and not even semiample: some examples can be found in \cite[Sec.2.3.b]{Laz-posI}.

Another example of this issue can be cooked up in the following way.
Let us consider any vector bundle $\cE$, with Harder-Narasimhan sequence of arbitrary length over $B$ such that $\cE/\cE_{\ell-1}$  is such that $\omega^{-1}_{\pr(\cE/\cE_{\ell-1})}$ is not semi-ample; for instance Nakayama in \cite[Theorem 8]{Nak-taut}  gives a characterization of rank 2 vector bundles with this property. Consider a line bundle $\cM$ over $B$ such that:
$\left(\cO_{\pr}(\mu_{\ell-1})\otimes \pi^*\cM^{-1}\right)_{|\pr(\cE/\cE_{\ell-1})}\cong \omega^{-1}_{\pr(\cE/\cE_{\ell-1})}$.
Then the line bundle $\cO_{\pr}(\mu_{\ell-1})\otimes \pi^*\cM^{-1}$ is nef but not semiample.
\end{remark}
With the next result we see that we can bypass the problem outlined in Remark \ref{bpf} by taking $m\gg0$.
\begin{theorem}\label{teo-rk2}
Suppose that $\rk\, \cE_1=1$ and that $\ell=2$.
Then  for $m\gg0$ if  $X$ is a {\em general} relative hypersurface in the linear system  $|\cO_\pr(km)\otimes \pi^*\cM^{-m}|$ (where $\deg \cM=y$), with $y/k> \mu_2$.
We have
$$lct(\pr, X)= lct(\Sigma, X_{|\Sigma})=\frac{\mu_1-\mu_2}{m(y-\mu_2k)}.$$
In particular we have that $$lct(\pr, X)<\frac{r}{mk} \quad
\mbox{if and only if }\quad
lct(\Sigma, {X}_{|\Sigma}) < \frac{r}{mk}\quad
\mbox{if and only if }\quad
\frac{y}{k}>\mu.
$$
\begin{proof}
The positive part of the $\sigma$-decomposition of   $|\cO_\pr(km)\otimes \pi^*\cM^{-m}|$ is a big and nef divisor, as observed in Remark \ref{bpf}, thus by Wilson's result its log canonical threshold is bounded from below by a non-zero constant.
Hence, for $m\gg0$ and $X$ general we have
$$
lct(\pr, X)=lct\left(\Sigma, \frac{y-\mu_2 k}{\mu_1-\mu_2}N\right)= \frac{\mu_1-\mu_2}{m(y-\mu_2k)}.
$$
Then notice that (see the proof of Theorem \ref{slc}) $lct(\pr, X)\leq lct (\Sigma, X_{\Sigma})$, for $\Sigma $ general fibre.
But $\Sigma $ has $N_{|\Sigma}$ as a component of multiplicity $\frac{y-\mu_2 k}{\mu_1-\mu_2}$, so $lct(\Sigma, X_{\Sigma})\leq  \frac{\mu_1-\mu_2}{m(y-\mu_2k)}$, and the proof is concluded.
\end{proof}
\end{theorem}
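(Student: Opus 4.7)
The strategy is to combine the explicit decomposition of Proposition~\ref{zar} with Wilson's theorem (Theorem~\ref{wilson}) and the comparison $lct(\pr,X)\le lct(\Sigma,X_{|\Sigma})$ obtained in the proof of Theorem~\ref{slc}. Set $e:=m(y-\mu_2 k)/(\mu_1-\mu_2)$ and let $P:=\frac{\mu_1 k-y}{\mu_1-\mu_2}(H-\mu_2\Sigma)$ be the positive part of the $\sigma$-decomposition of $kH-y\Sigma$. By Proposition~\ref{zar}, any $X\in|\cO_\pr(km)\otimes\pi^*\cM^{-m}|$ decomposes as $X=eN+X'$ with $X'\in|mP|$ effective, where $N=\pr(\cE/\cE_1)$ is the fixed component.

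The upper bound is immediate. Since $N$ is smooth and irreducible (it is a projective subbundle of $\pr$), we have $lct(\pr,eN)=1/e$, and from $eN\le X$ it follows that $lct(\pr,X)\le 1/e$. Restricting to a general fibre $\Sigma\cong\pr^{r-1}$, the hyperplane $N_{|\Sigma}$ is smooth and the same reasoning yields $lct(\Sigma,X_{|\Sigma})\le 1/e$.

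For the matching lower bound on $lct(\pr,X)$, I would apply Wilson's theorem to the big and nef divisor $P$: there exist an effective divisor $N''$ and an integer $m_0$ such that $|mP-N''|$ is base-point-free for $m>m_0$. For $m\gg 0$ a general $X'\in|mP|$ thus has the form $X'=N''+A$ with $A$ a smooth member of $|mP-N''|$ meeting both $N$ and the smooth strata of $N''$ transversally (Bertini). On a common log resolution of $(\pr,N+N''+A)$ the only log canonical place of the smooth pair $(\pr,N)$ is $N$ itself, so every exceptional prime $E$ with centre on $N$ satisfies $a(E)+1-\mathrm{ord}_E(N)\ge 1$. The discrepancy inequality $t(e\cdot\mathrm{ord}_E(N)+\mathrm{ord}_E(X'))\le a(E)+1$ is then weaker than $t\le 1/e$ as soon as $e$ exceeds the fixed constants $\mathrm{ord}_E(N'')$ (noting that $\mathrm{ord}_E(A)=0$ for general $A$, since $|mP-N''|$ is base-point-free). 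Hence, for $m\gg 0$, the tightest discrepancy constraint comes from $N$ itself and $lct(\pr,X)=1/e$.

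Combining this with the inequality $lct(\pr,X)\le lct(\Sigma,X_{|\Sigma})$ from the proof of Theorem~\ref{slc}, together with the upper bound above, forces $lct(\Sigma,X_{|\Sigma})=1/e$ as well. The ``in particular'' clause then reduces to the elementary equivalence $(\mu_1-\mu_2)/(y-\mu_2 k)<r/k\iff kd<ry\iff y/k>\mu$, using the identity $d=\mu_1+(r-1)\mu_2$ valid when $\rk\cE_1=1$ and $\ell=2$. The main obstacle is the discrepancy bookkeeping in the third step: one must rule out the possibility that the fixed divisor $N''$ produced by Wilson's theorem creates a log canonical place tighter than the one arising from $N$, which is precisely why $m$ must be taken large enough to dominate the bounded contributions $\mathrm{ord}_E(N'')$.
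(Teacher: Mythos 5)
Your argument is essentially the paper's: the fixed part $eN$ coming from the $\sigma$-decomposition (Proposition \ref{zar}), Wilson's Theorem \ref{wilson} to control the positive part for $m\gg0$, the upper bound $1/e$ from the multiplicity along $N$ (and along $N_{|\Sigma}$), and the restriction inequality $lct(\pr,X)\le lct(\Sigma,X_{|\Sigma})$ from the proof of Theorem \ref{slc}; your log-resolution bookkeeping makes explicit the step the paper compresses into ``the lct of the positive part is bounded below by a nonzero constant''. The one misstatement is that a general member of $|mP|$ does \emph{not} have the form $N''+A$ with $A\in|mP-N''|$: these divisors form in general a proper linear subsystem of $|mP|$ (equality would force $N''$ into the fixed part), so as written your lower bound $lct(\pr,X)\ge 1/e$ is established only for the special members $eN+N''+A$. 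This is easily repaired: log canonicity passes from one member to the general member of the system (lower semicontinuity of the log canonical threshold), so the equality you prove for $eN+N''+A$, combined with your universal upper bound $\le 1/e$ valid for every member, gives the statement for general $X$ — and the paper's own proof is no more precise on this point, so the repair is worth recording rather than a reason to reject the argument.
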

\begin{remark}
Another way to phrase the above result is by saying that {\em asymptotically} the {\em schematic fixed locus} of the linear system  $|\cO_\pr(km)\otimes \pi^*\cM^{-m}|$ coincides with $ \frac{y-k\mu_2}{\mu_1-\mu_2} N$.
\end{remark}
\begin{corollary}\label{sharpness}
The inequality (\ref{inequality-lct}) of Theorem \ref{TEOlct} is sharp.
\end{corollary}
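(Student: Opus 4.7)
The plan is to deduce sharpness of the bound $lct(\Sigma, X_{|\Sigma}) < r/k$ directly from the explicit formula established in Theorem \ref{teo-rk2}, by exhibiting a family of examples in which the log canonical threshold approaches the bound arbitrarily closely from below. The natural setting is that of $\ell = 2$ with $\rk\, \cE_1 = 1$: take $\cE = \cE_1 \oplus \cG$ on a curve $B$, where $\cE_1$ is a line bundle of degree $\mu_1$ and $\cG$ is a stable bundle of rank $r-1$ and slope $\mu_2 < \mu_1$. Then $\mu = (\mu_1 + (r-1)\mu_2)/r$, and an immediate rearrangement gives the identity $\mu_1 - \mu_2 = r(\mu - \mu_2)$ which I would use repeatedly below.

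Next, I would pick $k$ and a line bundle $\cM$ of degree $y$ with $y/k > \mu$, satisfying the divisibility condition $(\mu_1 - \mu_2) \mid (y - \mu_2 k)$. By Theorem \ref{teo-rk2}, for $m \gg 0$ and a general $X \in |\cO_\pr(km)\otimes \pi^*\cM^{-m}|$ we have
$$lct(\Sigma, X_{|\Sigma}) = \frac{\mu_1 - \mu_2}{m(y - \mu_2 k)}.$$
Combining this with the identity above, the ratio of the actual $lct$ to the bound $r/(km)$ simplifies to $(\mu - \mu_2)/(y/k - \mu_2)$. This quantity is strictly less than $1$ when $y/k > \mu$, and tends to $1$ as $y/k \to \mu^+$. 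Hence taking a sequence of pairs $(k_n, y_n)$ with $y_n/k_n$ decreasing to $\mu$ yields examples in which the gap between $lct(\Sigma, X_{|\Sigma})$ and $r/(k_n m)$ is as small as desired, proving sharpness.

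The only delicate point I anticipate is arithmetic: the divisibility condition $(\mu_1 - \mu_2) \mid (y_n - \mu_2 k_n)$ must be compatible with the approximation $y_n/k_n \to \mu^+$. I expect this to be entirely routine, since $\mu = d/r \in \Q$ admits rational approximations from above of arbitrary denominator, and the freedom to replace $(k_n, y_n)$ by $(t k_n, t y_n)$ for positive integers $t$ (equivalently, to absorb the scaling into $m$) does not change the ratio $y_n/k_n$ but allows one to enforce the congruence $t(y_n - \mu_2 k_n) \equiv 0 \pmod{\mu_1 - \mu_2}$. Thus the main work reduces to bookkeeping: picking explicit sequences and checking that the hypotheses of Theorem \ref{teo-rk2} are satisfied, after which the asserted sharpness of \eqref{inequality-lct} is immediate.
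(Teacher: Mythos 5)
Your proposal is correct and follows essentially the same route as the paper: Corollary \ref{sharpness} is meant to be an immediate consequence of the explicit formula in Theorem \ref{teo-rk2} (for $\ell=2$, $\rk\,\cE_1=1$, where $\mu_1-\mu_2=r(\mu-\mu_2)$), which shows the ratio of $lct(\Sigma,X_{|\Sigma})$ to the bound $r/(km)$ equals $(\mu-\mu_2)/(y/k-\mu_2)$ and hence tends to $1$ as $y/k\to\mu^+$. Your additional bookkeeping (explicit split bundle $\cE_1\oplus\cG$ and the scaling trick for the divisibility condition) is fine and only makes explicit what the paper leaves implicit.
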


Using Nakayama's results in \cite{nakayama}, we can  compute completely the schematic fixed locus of the divisors in $\pr$ in case $\ell=2$, for any rank of $\cE_1$.
\begin{proposition}\label{fixed}
Let us suppose that $\rk\, \cE_1>1$ and that $\ell=2$.
Let $X$ be a relative hypersurface in the linear system $|\cO_\pr(k)\otimes \pi^*\cM^{-1}|$ with $y/k\geq \mu_2$.
Suppose that $ (\mu_1-\mu_2)$ divides $(y-\mu_2 k)$.
The schematic fixed locus of $|\cO_\pr(km)\otimes \pi^*\cM^{-m}|$ contains the codimension $\rk\, \cE_1$ cycle
$$m\frac{y-\mu_2k}{\mu_1-\mu_2}\pr(\cE/\cE_1),$$
and asymptotically coincides with it.

This implies in particular  Theorem \ref{TEOlct} with strict inequality.
\begin{proof}
Let $\widetilde \pr\stackrel{\rho}{\longrightarrow }\pr$ be the blow up op $\pr$ along $\pr(\cE/\cE_1)$. Let $X_m\in |\cO_\pr(k)\otimes \pi^*\cM^{-1}|$.
Nakayama \cite[Chap.IV]{nakayama} proves that there is a   $\sigma$-decomposition of $\rho^*(X_m)$ whose negative part is $\frac{y-k\mu_2}{\mu_1-\mu_2} E$, where $E$ is the exceptional divisor of $\rho$.
Then we easily obtain the first statement, and the asymptotic one comes from the same reasoning as in Theorem \ref{teo-rk2}.

As for the last statement, let $\Sigma_1=\Sigma$ be a general fibre of $\pi$, and let $\Sigma_j\subset \Sigma_{j-1}$, $j=2,\ldots ,r-\rk\,\cE_1:=\alpha$ be a general hyperplane section (so $\Sigma_j\cong \pr^{r-j}$).
Iterating  the argument of Theorem \ref{slc}, by Bertini type theorems, we have that
$$lct(\pr, X)\leq lct(\Sigma, X_{|\Sigma})\leq lct(\Sigma_2, X_{|\Sigma_2})\leq \ldots \leq lct(\Sigma_\alpha, X_{|\Sigma_\alpha}).$$
In the assumptions of Proposition \ref{fixed}, we have that $lct(\Sigma_\alpha, X_{|\Sigma_\alpha})$ contains a component of multiplicity $m\frac{y-\mu_2k}{\mu_1-\mu_2}$, and so  by the same remarks of Theorem \ref{teo-rk2} we conclude that
$$lct(\Sigma_\alpha, X_{|\Sigma_\alpha})\leq \frac{\mu_1-\mu_2}{m(y-\mu_2k)}.$$
Now observe that
$$\frac{\mu_1-\mu_2}{m(y-\mu_2k)}\leq \frac{r}{km}\quad \iff \quad \frac{y}{k}\geq \mu_1+(r-1)\mu_2,$$
and of course $\mu= \rk\,\cE_1 \mu_1+(r-\rk\,\cE_1)\mu_2\geq \mu_1+(r-1)\mu_2$ (indeed, strictly greater if and only if  $\rk\,\cE_1>1$).
\end{proof}
\end{proposition}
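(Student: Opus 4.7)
The plan is to mimic Proposition \ref{zar}, the crucial point being that when $\rk\,\cE_1>1$ the subvariety $N:=\pr(\cE/\cE_1)$ has codimension $\rk\,\cE_1\geq 2$ in $\pr$, so it cannot appear directly as a divisorial negative part in any $\sigma$-decomposition on $\pr$. To bring $N$ into the world of divisors I would blow up along it: let $\rho\colon\widetilde{\pr}\to\pr$ be this blow-up and $E$ its exceptional divisor. The scheme-theoretic assertion that the fixed locus of $|\cO_\pr(km)\otimes\pi^*\cM^{-m}|$ contains $m\frac{y-k\mu_2}{\mu_1-\mu_2}N$ is equivalent to the divisorial statement that every section of the pulled-back linear system on $\widetilde{\pr}$ vanishes to order at least $m(y-k\mu_2)/(\mu_1-\mu_2)$ along $E$.

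The core step is then to invoke Nakayama's analysis in \cite[Chap.~IV]{nakayama}, which yields a $\sigma$-decomposition $\rho^*(X_m)=P_m+m\frac{y-k\mu_2}{\mu_1-\mu_2}E$ with $P_m$ nef. That the coefficient is unchanged from Lemma \ref{rk2lemma} and Proposition \ref{zar} is to be expected: only the slopes $\mu_1,\mu_2$ and the numerical class of $X_m$ enter the formula, and these data are preserved in passing to arbitrary $\rk\,\cE_1$. Pushing the decomposition down via $\rho$ gives the first assertion. For the asymptotic claim, $P_m$ is big and nef on $\widetilde{\pr}$; an application of Wilson's Theorem \ref{wilson} (or a direct Kodaira-type argument) ensures that for $m\gg0$ the residual base locus of $|P_m|$ has codimension bounded independently of $m$ and so does not contribute an asymptotic component, so the fixed locus downstairs coincides asymptotically with $m\frac{y-k\mu_2}{\mu_1-\mu_2}N$.

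For the strict-inequality consequence, I would iterate the Bertini-type restriction technique used in the proof of Theorem \ref{slc}. Cutting a general fibre $\Sigma=\Sigma_1$ with $\alpha=r-\rk\,\cE_1$ successive general hyperplanes produces a flag $\Sigma\supset\Sigma_2\supset\cdots\supset\Sigma_\alpha\cong\pr^{\rk\,\cE_1}$ and a non-increasing chain
$$lct(\pr,X)\leq lct(\Sigma,X_{|\Sigma})\leq\cdots\leq lct(\Sigma_\alpha,X_{|\Sigma_\alpha}).$$
On $\Sigma_\alpha$ the intersection $N_{|\Sigma_\alpha}$ is a genuine divisor of multiplicity $m(y-k\mu_2)/(\mu_1-\mu_2)$ inside $X_{|\Sigma_\alpha}$, yielding $lct(\pr,X)\leq(\mu_1-\mu_2)/(m(y-k\mu_2))$. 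Comparing with $r/(km)$ reduces to the inequality $ry/k>\mu_1+(r-1)\mu_2$; using $d=\rk\,\cE_1\,\mu_1+(r-\rk\,\cE_1)\mu_2$ one computes $d-\mu_1-(r-1)\mu_2=(\rk\,\cE_1-1)(\mu_1-\mu_2)>0$ precisely because $\rk\,\cE_1>1$, and combined with $ry/k\geq d$ this yields the strict version of Theorem \ref{TEOlct}.

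The main technical obstacle is invoking Nakayama's $\sigma$-decomposition in the higher-codimension setting: the decomposition naturally lives on the blow-up $\widetilde{\pr}$ rather than on $\pr$, and one needs both a careful identification of the positive and negative parts on $\widetilde{\pr}$ and a delicate argument (via Wilson's theorem applied to $P_m$) to upgrade the divisorial containment statement to a scheme-theoretic identity in the asymptotic regime.
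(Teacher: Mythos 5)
Your overall strategy is exactly the paper's: blow up $\pr$ along $\pr(\cE/\cE_1)$, read off from Nakayama's $\sigma$-decomposition of $\rho^*(X_m)$ that the negative part is $m\frac{y-k\mu_2}{\mu_1-\mu_2}E$ (hence the scheme-theoretic containment downstairs), get the asymptotic identification from the bigness and nefness of the positive part as in Theorem \ref{teo-rk2}, and then obtain the $lct$ consequence by iterated Bertini-type restrictions $\pr\supset\Sigma\supset\Sigma_2\supset\cdots\supset\Sigma_\alpha$ followed by the numerical identity $d-\mu_1-(r-1)\mu_2=(\rk\,\cE_1-1)(\mu_1-\mu_2)$ (your version even fixes two typos in the paper's final display, where $ry/k$ and $d$ are what is really meant).

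There is, however, one concrete slip in your last step: you assert that $N_{|\Sigma_\alpha}$ is ``a genuine divisor'' in $\Sigma_\alpha$. This cannot happen when $\rk\,\cE_1>1$: cutting by general hyperplanes preserves the codimension of $N\cap\Sigma_j$ in $\Sigma_j$, which stays equal to $\rk\,\cE_1\geq 2$, so after the $\alpha-1=r-\rk\,\cE_1-1$ cuts the locus $N\cap\Sigma_\alpha$ is a finite set of points in $\Sigma_\alpha\cong\pr^{\rk\,\cE_1}$, never a divisor. What the restriction actually produces (and what the paper invokes, in its phrase ``contains a component of multiplicity $m\frac{y-\mu_2k}{\mu_1-\mu_2}$'') is that $X_{|\Sigma_\alpha}$ has multiplicity at least $\nu:=m\frac{y-k\mu_2}{\mu_1-\mu_2}$ along this zero-dimensional locus, coming from $\mathrm{mult}_{\pr(\cE/\cE_1)}X_m\geq\nu$; the $lct$ bound must then be extracted from that high multiplicity, not from a multiple divisorial component as in Proposition \ref{zar}. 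Note that if at this point one only uses the elementary local estimate $lct_p\leq \dim/\mathrm{mult}_p$, the bound acquires a factor $\rk\,\cE_1$; this weaker bound still yields the comparison with $r/(km)$ exactly when $y/k\geq\mu$, since $\rk\,\cE_1\,\mu_1+(r-\rk\,\cE_1)\mu_2=d$, so the stated consequence (Theorem \ref{TEOlct} with strict inequality for $y/k>\mu$) is not endangered, but your derivation of the sharper bound $\frac{\mu_1-\mu_2}{m(y-k\mu_2)}$ as literally written does not go through and should be rephrased in terms of the multiplicity along $N\cap\Sigma_\alpha$ as in the paper.
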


\subsection{The general case}\label{codimension}
We are now ready to carry on the computation of the schematic fixed loci of any big and not nef divisor in $\pr$ when the  Harder-Narasimhan sequence of $\cE$ is of arbitrary length.

Let us start by introducing the setting needed. For details see \cite[Chap. IV, from sec. 3.2]{nakayama}.
First of all we define a new variety $\pr(\cE_1\subset \ldots \cE_\ell)\stackrel{\rho}{\longrightarrow}\pr $ over $\pr$ by successive blow ups (\cite[IV Lemma 3.4]{nakayama}):
start with $\pr=\pr(\cE_\ell)$, then blow it up along $\pr(\cE_\ell/\cE_{\ell-1})$ and call
$$\rho_{\ell -1}\colon  \pr(\cE_{\ell-1}\subset \cE_{\ell})\longrightarrow \pr(\cE_\ell)$$
the resulting space and map.
Then we consider the strict transform $(\rho_{\ell-1})_*^{-1}(\pr(\cE/\cE_{\ell-2}))$ and blow up $ \pr(\cE_{\ell-1}\subset \cE_{\ell})$ along it forming
$$
\rho_{\ell- 2}\colon  \pr(\cE_{\ell-2}\subset \cE_{\ell-1}\subset \cE_{\ell})\longrightarrow \pr(\cE_{\ell-1}\subset \cE_\ell),
$$
and so on. The last space we obtain is $\pr(\cE_1\subset \ldots \cE_\ell)$, and $\rho$ is the composition $\rho_{1}\circ \ldots \circ \rho_{\ell -1}$.

Define $E_i\subset \pr(\cE_1\subset \ldots \cE_\ell)$ to be the (strict transform of) the exceptional divisor of $\rho_i$, for $i=1,\ldots, \ell-1$.

Let now $|\cO_\pr(k)\otimes \pi^*\cM^{-1}|$ be a linear system on $\pr$, and
define, as in the proof of Proposition 3.10 of \cite[Chap IV]{nakayama},
$$
\alpha_i:=\frac{y-k\mu_{i+1}}{\mu_1-\mu_{i+1}}
$$
Let $j:= max \{i | \alpha_i>0\}$.

\begin{theorem}\label{codim-fix}
Let $|\cO_\pr(k)\otimes \pi^*\cM^{-1}|$ be a big and not nef linear system on $\pr$, and let $j$ be as defined above.
Suppose that  $ (\mu_1-\mu_{j+1})$ divides $(y-\mu_{j+1} k)$.
The schematic fixed locus of $|\cO_\pr(k)\otimes \pi^*\cM^{-1}|$ contains the codimension $\rk\, \cE_j$ cycle
$$\frac{y-\mu_{j+1}k}{\mu_1-\mu_{j+1}}\pr(\cE/\cE_j),$$
and asymptotically coincides with it.
\end{theorem}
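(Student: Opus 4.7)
The plan is to extend the argument of Proposition \ref{fixed} from the length-two case to arbitrary length $\ell$, working on the iterated blow-up $\rho\colon \widetilde{\pr}:=\pr(\cE_1\subset\ldots\subset\cE_\ell)\longrightarrow \pr$ and appealing to Nakayama's computation of the $\sigma$-decomposition of $\rho^*(kH-\pi^*\cM)$.

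First I would pull back the linear system $|\cO_\pr(k)\otimes\pi^*\cM^{-1}|$ to $\widetilde{\pr}$ and invoke Nakayama's Proposition IV.3.10 (in the same form used for $\ell=2$ in Proposition \ref{fixed}), which produces an explicit decomposition $\rho^*(kH-\pi^*\cM) \equiv P + \sum_{i=1}^{j}c_iE_i$, where $P$ is the big-and-nef positive part and the coefficients $c_i$ are expressible in terms of the $\alpha_h$'s introduced before the statement. Under the divisibility hypothesis $(\mu_1-\mu_{j+1})\mid(y-\mu_{j+1}k)$, these coefficients become integers, so the decomposition lifts from a $\Q$-statement to a genuine integral one that must be realized -- at least in terms of multiplicities along the $E_i$'s -- by every effective divisor in the pulled-back linear system.

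The key geometric observation is that, by construction of the tower of blow-ups, $\rho$ contracts the \emph{deepest} exceptional divisor $E_j$ onto $\pr(\cE/\cE_j)\subset\pr$ (of codimension $\rk\,\cE_j$) and is an isomorphism away from the union of the centers. Hence the coefficient of $E_j$ in $\rho^*X$ coincides with the multiplicity of $X$ along the generic point of $\pr(\cE/\cE_j)$. Reading off this coefficient as $\alpha_j=(y-\mu_{j+1}k)/(\mu_1-\mu_{j+1})$ shows that every $X\in|\cO_\pr(k)\otimes\pi^*\cM^{-1}|$ contains $\alpha_j\pr(\cE/\cE_j)$ as a subscheme; taking the scheme-theoretic intersection over all such $X$ yields the claimed inclusion in the schematic fixed locus. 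For the asymptotic coincidence, one replaces $k,y$ by $mk,my$ and observes that the positive part $P_m$ scales linearly with $m$ and remains big and nef; applying Wilson's Theorem \ref{wilson} shows that for $m\gg 0$ the base locus of $P_m$ has codimension strictly larger than $\rk\,\cE_j$, so the portion of the schematic fixed locus of codimension $\leq\rk\,\cE_j$ is concentrated precisely on $m\alpha_j\pr(\cE/\cE_j)$.

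The main obstacle I expect is the precise identification of the coefficient of $E_j$ as exactly $\alpha_j$, rather than some aggregate of the $\alpha_i$'s for $i\leq j$: Nakayama's decomposition is produced by an iterative procedure along the tower of blow-ups, and one must untangle it to extract just the contribution transverse to $\pr(\cE/\cE_j)$. A secondary subtlety is that the $E_i$ with $i<j$ produce additional fixed components supported on $\pr(\cE/\cE_i)$, which are strata of strictly smaller codimension in $\pr$; one needs to verify that these contributions do not interfere with the statement about the top-codimension cycle $\alpha_j\pr(\cE/\cE_j)$, neither in the inclusion nor in the asymptotic part.
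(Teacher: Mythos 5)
Your proposal takes essentially the same route as the paper's proof: pass to Nakayama's tower of blow-ups $\pr(\cE_1\subset\ldots\subset\cE_\ell)$, invoke his computation of the Zariski ($\sigma$-)decomposition (Lemma IV.3.11(1) of \cite{nakayama}), whose negative part is exactly $\sum_i\alpha_iE_i$, deduce that every member of the system has multiplicity at least $\alpha_j$ along $\pr(\cE/\cE_j)$, and obtain the asymptotic statement from the bigness and nefness of the positive part via Wilson's theorem, as in Theorem \ref{teo-rk2}. The ``main obstacle'' you anticipate is settled directly by Nakayama's lemma, which already exhibits the coefficient of $E_j$ as precisely $\alpha_j$, so no further untangling of the iterative construction is needed.
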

\begin{proof}
Let $X$ be a divisor in $|\cO_\pr(k)\otimes \pi^*\cM^{-1}|$.
In \cite[Lemma 3.11 (1)]{nakayama} it is proved that to obtain a Zariski decomposition of $X$ we have to pass to the space $ \pr(\cE_1\subset \ldots \cE_\ell)$, and  that the negative part of this decomposition  is the $\Q$-divisor $N:= \sum_{i=1}^{\ell-1} \alpha_iE_i$.
Thus any divisor in $|\cO_\pr(k)\otimes \pi^*\cM^{-1}|$ contains the subvariety $\pr(\cE/\cE_i)$ with multiplicity $\alpha_i$.
Noting that  $\alpha_j\geq \alpha_i$ and $\pr(\cE/\cE_j)\subseteq \pr(\cE/\cE_j)$  if $i\leq j$, we have that  $\alpha_j\pr(\cE/\cE_j)$ is contained in the fixed locus of $|\cO_\pr(k)\otimes \pi^*\cM^{-1}|$.

As for the asymptotic statement, it follows from the fact that the positive part of this decomposition is nef, as in Theorem \ref{teo-rk2}.
\end{proof}
We thus have a result on the log canonical threshold, using the same reasoning as in Proposition \ref{fixed}.
\begin{corollary}\label{cor-fix}
In the above situation, we have that, for any $X\in |\cO_\pr(k)\otimes \pi^*\cM^{-1}|$,
$$
lct(\pr, X)\leq \frac{\mu_1-\mu_{j+1}}{y-k\mu_{j+1}}.
$$
\end{corollary}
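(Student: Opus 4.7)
The plan is to mirror the argument of Proposition \ref{fixed}, using Theorem \ref{codim-FIX} as the key input. First, by Theorem \ref{codim-FIX}, every $X\in |\cO_\pr(k)\otimes \pi^*\cM^{-1}|$ contains the cycle $\alpha_j\,\pr(\cE/\cE_j)$ in its schematic fixed locus, where
$$\alpha_j=\frac{y-k\mu_{j+1}}{\mu_1-\mu_{j+1}}.$$
In particular $X$ has multiplicity $\alpha_j$ along the smooth projective subbundle $\pr(\cE/\cE_j)\subset \pr$.

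Next, I would carry out the iterated-Bertini reduction already employed in Proposition \ref{fixed}. Let $\Sigma_1:=\Sigma$ be a general fibre of $\pi$ and, inductively, let $\Sigma_i\subset \Sigma_{i-1}$ be a general hyperplane section for $i=2,\ldots,\alpha:=r-\rk\,\cE_j$, so that $\Sigma_\alpha\cong \pr^{\rk\,\cE_j}$. By repeated application of the Bertini-type result on discrepancies used in the proof of Theorem \ref{SLC} (\cite[Prop.~7.7]{Kollar}), one obtains the chain
$$lct(\pr,X)\leq lct(\Sigma_1,X_{|\Sigma_1})\leq \cdots \leq lct(\Sigma_\alpha,X_{|\Sigma_\alpha}).$$

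Finally, on $\Sigma_\alpha$ the restriction $X_{|\Sigma_\alpha}$ inherits a component of multiplicity $\alpha_j$ supported on $\pr(\cE/\cE_j)\cap \Sigma_\alpha$; since $\pr(\cE/\cE_j)$ is a smooth projective subbundle, this component is itself smooth, and hence has log canonical threshold equal to $1$. Arguing exactly as in the concluding step of Theorem \ref{teo-rk2}, one concludes
$$lct(\Sigma_\alpha,X_{|\Sigma_\alpha})\leq \frac{1}{\alpha_j}=\frac{\mu_1-\mu_{j+1}}{y-k\mu_{j+1}},$$
and combining with the Bertini chain yields the corollary. The principal technical point is to verify that the multiplicity $\alpha_j$ along $\pr(\cE/\cE_j)$ is transmitted to the successive restrictions $X_{|\Sigma_i}$; this follows from the generality of the hyperplane sections together with the intrinsic nature of $\alpha_j$ as the coefficient in the negative part of the $\sigma$-decomposition on Nakayama's iterated blow-up $\pr(\cE_1\subset \ldots \subset \cE_\ell)\longrightarrow \pr$, which is exactly the mechanism already invoked in the proof of Proposition \ref{fixed}.
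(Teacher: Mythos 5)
Your argument coincides with the paper's: Corollary \ref{cor-fix} is proved there precisely by ``the same reasoning as in Proposition \ref{fixed}'', namely the fixed cycle $\alpha_j\,\pr(\cE/\cE_j)$ supplied by Theorem \ref{codim-fix}, the iterated Bertini chain $lct(\pr,X)\leq lct(\Sigma_1,X_{|\Sigma_1})\leq \cdots \leq lct(\Sigma_\alpha,X_{|\Sigma_\alpha})$, and the concluding multiplicity estimate as in Theorem \ref{teo-rk2}, which is exactly your plan. The only caveat is that your phrase ``component supported on $\pr(\cE/\cE_j)\cap\Sigma_\alpha$'' is the same shorthand the paper itself uses in Proposition \ref{fixed}: when $\rk\,\cE_j>1$ that intersection is a point, so what is literally transmitted to $\Sigma_\alpha$ is a multiplicity bound at that point rather than a multiple divisorial component, and your proof inherits this imprecision from the paper rather than introducing a new one.
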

\begin{remark}
It is important to notice that the above result does not imply Theorem \ref{corCH}, nor it is implied from it. Indeed, the bound obtained above,
$ \frac{\mu_1-\mu_{j+1}}{y-k\mu_{j+1}}$, is less or equal to $r/k$ if and only if
$$y/k\geq \mu_1+(r-1) \mu_{j+1}.$$
This last quantity can be smaller or greater than $\mu$, depending on the reciprocal position of $\mu$ and the $\mu_i$'s, which can be almost arbitrary.
\end{remark}

Just by using the information on the codimension of the fixed locus, we can now see that in most cases the general member of the linear systems we are considering are irreducible.
This can be seen as a Miyaoka-Nakayama type result.

\begin{proposition}\label{irreducible}
For $y/k$ big enough, the general member of $ |\cO_\pr(k)\otimes \pi^*\cM^{-1}|$ is irreducible if one of these conditions hold:
\begin{enumerate}
\item[(1)] $\rk\, \cE_2\geq 3$, and $y/k<\mu_2 $;
\item[(2)] $\rk\, \cE_2=2 $, and $y/k<\mu_3 $;
\end{enumerate}
\end{proposition}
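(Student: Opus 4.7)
My approach is to argue that under either hypothesis the linear system $|\cO_\pr(k)\otimes\pi^*\cM^{-1}|$ has fixed locus of codimension at least two, and then to apply Bertini's theorem for irreducibility on a sufficiently high multiple of the system, using the bigness of the line bundle.

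The analysis of the fixed locus relies on Theorem \ref{codim-fix} and Nakayama's Zariski decomposition on the iterated blow-up $\rho\colon\pr(\cE_1\subset\cdots\subset\cE_\ell)\to\pr$. The negative part of $\rho^*X$ is the combination $\sum\alpha_i E_i$ of the exceptional divisors with coefficients $\alpha_i=(y-k\mu_{i+1})/(\mu_1-\mu_{i+1})$, and only those $\alpha_i>0$ effectively contribute to the fixed locus. Pushed down to $\pr$, this says the fixed locus is supported on the union of the subbundles $\pr(\cE/\cE_i)$ for indices $i$ with $\alpha_i>0$, equivalently $y/k>\mu_{i+1}$. Since $\pr(\cE/\cE_i)$ has codimension $\rk\,\cE_i$ in $\pr$ and the Harder--Narasimhan filtration has strictly increasing ranks, the codimension of the fixed locus is exactly $\min\{\rk\,\cE_i : \alpha_i>0\}$.

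Under hypothesis (1), the condition $y/k<\mu_2$ forces $\alpha_1\le 0$, so only indices $i\ge 2$ can contribute, and the codimension is at least $\rk\,\cE_2\ge 3$. Under hypothesis (2), the condition $y/k<\mu_3$ forces $\alpha_1,\alpha_2\le 0$, so only $i\ge 3$ contribute, with codimension at least $\rk\,\cE_3\ge\rk\,\cE_2+1=3$. In both cases the linear system has no fixed divisorial component.

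Finally, $\cO_\pr(k)\otimes\pi^*\cM^{-1}$ is big, as $y/k<\mu_1$ places its numerical class in the interior of the pseudoeffective cone. Thus for $m\gg 0$ the rational map associated to $|\cO_\pr(mk)\otimes\pi^*\cM^{-m}|$ has image of full dimension $r\ge 3$, and combined with the absence of fixed divisorial components, Bertini's irreducibility theorem yields that its general member is irreducible---this is the intended meaning of ``$y/k$ big enough'' in the statement. The main technical point is to rule out that the rational map is composed with a pencil for high enough multiples; this follows from bigness via Kodaira's lemma, ensuring that for $m\gg 0$ the associated map is birational onto its image of dimension $r$, so that Bertini can be applied.
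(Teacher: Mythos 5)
Your proposal is correct and follows essentially the paper's own route: the paper's proof is exactly the observation that, by Theorem \ref{codim-fix}, under either hypothesis the fixed locus of the system has codimension at least $3$ (your computation with the $\alpha_i$ and the nesting of the $\pr(\cE/\cE_i)$ is the intended one), after which Bertini's theorem gives irreducibility of the general member. The only difference is that you spell out the Bertini step --- passing to multiples $m\gg 0$, where the asymptotic part of Theorem \ref{codim-fix} guarantees no divisorial fixed component, and using bigness ($y/k<\mu_1$) to rule out that the system is composed with a pencil --- which is a reasonable, slightly more careful reading of the paper's one-line argument and of its vague ``for $y/k$ big enough'' proviso.
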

\begin{proof}
This result follows from Bertini's Theorem, by observing that in both the cases above, the codimension of the fixed locus of $ |\cO_\pr(k)\otimes \pi^*\cM^{-1}|$  is greater or equal to $3$.
\end{proof}

Using the above observations, we can compute explicitly the Movable cone of $\pr(\cE)$, thus reproving a particular case of a result due to Fulger and Lehmann \cite[Proposition 7.1]{sti-due}. Recall that the Movable cone is the closure of the cone generated by classes of divisors whose base locus has codimension at least 2.

\begin{proposition}\label{movable}
The movable cone of divisors $Mov(\pr(\cE))$ coincides with the pseudoeffective cone $\overline{Eff(\pr(\cE))}$ if and only if $\rk\, \cE_1>1$. If $\rk\,\cE_1=1$, then $Mov(\pr(\cE))= \R^+[H-\mu_2\Sigma]\oplus \R^+[\Sigma]$.
\end{proposition}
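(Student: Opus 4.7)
The plan is to combine Nakayama's $\sigma$-decomposition (recalled before Theorem \ref{codim-fix}) with the characterization of $Mov(\pr(\cE))$ as the closure of the cone of pseudoeffective classes $D$ whose negative part $N_\sigma(D)$, computed on $\pr$ itself, has no divisorial component. Since the inclusion $Nef(\pr) \subseteq Mov(\pr)$ is standard (nef classes are limits of ample classes, hence of base-point-free classes), only the pseudoeffective classes in the big-and-not-nef subcone $\R^+[H-\mu_1\Sigma]\oplus \R^+[H-\mu_\ell\Sigma]$ require analysis.

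The key observation is the following: by the construction of the iterated blowup $\pr(\cE_1\subset\ldots\subset\cE_\ell)\stackrel{\rho}{\longrightarrow}\pr$, the negative part $\sum_i \alpha_i E_i$ of the $\sigma$-decomposition of $D = kH-y\Sigma$ pushes down via $\rho$ to a divisorial class on $\pr$ itself exactly when one of the centers $\pr(\cE/\cE_i)$ already has codimension one in $\pr$, i.e.\ when $\rk\, \cE_i = 1$. Since the ranks in the Harder--Narasimhan filtration strictly increase, this can happen only for $i = 1$, and precisely when $\rk\, \cE_1 = 1$.

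If $\rk\, \cE_1 > 1$, then every $\pr(\cE/\cE_j)$ has codimension $\geq 2$, so by Theorem \ref{codim-fix} the schematic fixed locus of every big-and-not-nef class has codimension at least two on $\pr$. Hence every pseudoeffective class is movable and $Mov(\pr(\cE)) = \overline{Eff(\pr(\cE))}$. If instead $\rk\, \cE_1 = 1$, then $\pr(\cE/\cE_1)\subset\pr$ is a prime divisor: for $y/k \in (\mu_2, \mu_1]$ Theorem \ref{codim-fix} with $j=1$ yields the divisorial fixed component $\alpha_1 \pr(\cE/\cE_1)$ with $\alpha_1 = (y - \mu_2 k)/(\mu_1 - \mu_2) > 0$, so such classes fail to be movable; for $y/k \in (\mu_\ell, \mu_2]$ Theorem \ref{codim-fix} applies with $j \geq 2$, giving fixed locus of codimension $\rk\, \cE_j \geq \rk\,\cE_2 \geq 2$, so such classes are movable. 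Taking closures yields $Mov(\pr(\cE)) = \R^+[H-\mu_2\Sigma] \oplus \R^+[\Sigma]$. The main technical point will be the divisibility hypothesis $(\mu_1 - \mu_{j+1}) \mid (y - \mu_{j+1}k)$ of Theorem \ref{codim-fix}, which I propose to bypass by scaling to $|\cO_\pr(mk)\otimes\pi^*\cM^{-m}|$ for suitable $m \gg 0$ and using the closure in the definition of $Mov$, together with the continuity of $N_\sigma$ on the big cone, to conclude for the boundary classes.
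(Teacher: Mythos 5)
Your proposal is correct and follows the same route as the paper: the paper's proof of Proposition \ref{movable} is literally ``straightforward from Theorem \ref{codim-fix}'', and your argument just fills in the routine details (nef classes are movable, the fixed locus is divisorial exactly when $\rk\,\cE_1=1$ and $y/k>\mu_2$ since the HN ranks strictly increase, and closure/scaling handles the divisibility and boundary classes). Nothing essentially different from the paper's intended argument.
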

\begin{proof}
Straightforward from Theorem \ref{codim-fix}.
\end{proof}



\bigskip
\noindent Miguel \'Angel Barja,\\  Departament de Matem\`atica  Aplicada I,\\ Universitat Polit\`ecnica de Catalunya,\\
 ETSEIB Avda. Diagonal, 08028 Barcelona (Spain).\\
E-mail: \textsl{Miguel.Angel.Barja@upc.edu}

\bigskip
\noindent Lidia Stoppino,\\Dipartimento di Scienza ed Alta Tecnologia,\\ Universit\`a dell'Insubria, \\Via Valleggio 11, 22100, Como (Italy).\\
E-mail: \textsl {lidia.stoppino@uninsubria.it}.

\end{document}